\newtheorem{theorem}{Theorem}[section]
\newtheorem{corollary}[theorem]{Corollary}
\newtheorem{proposition}[theorem]{Proposition}
\newtheorem{lemma}[theorem]{Lemma}
\newtheorem{problem}[theorem]{Problem}
\theoremstyle{definition}
\newtheorem{definition}[theorem]{Definition}
\newtheorem{example}[theorem]{Example}
\newtheorem{remark}[theorem]{Remark}
\newtheorem{fact}[theorem]{Fact}
\newcommand{\bsig}{\ensuremath{\mbox{\boldmath${\Sigma}$}}}
\newcommand{\C}{\ensuremath{\mathfrak{C}}}
\newcommand{\cc}{\ensuremath{\mathfrak{c}}}
\newcommand{\m}{\ensuremath{\mathfrak{m}}}
\newcommand{\n}{\ensuremath{\mathfrak{n}}}
\newcommand{\Sc}{\ensuremath{\mathcal{C}}}
\newcommand{\Sf}{\ensuremath{\mathcal{F}}}
\newcommand{\fs}{\ensuremath{\mbox{\textup{\texttt{fs}}}}}
\newcommand{\cs}{\ensuremath{\mbox{\textup{\texttt{cs}}}}}
\newcommand{\crs}{\ensuremath{\mbox{\textup{\texttt{crs}}}}}
\newcommand{\lfs}{\ensuremath{\mbox{\textup{\texttt{lfs}}}}}
\newcommand{\lcs}{\ensuremath{\mbox{\textup{\texttt{lcs}}}}}
\newcommand{\gfs}{\ensuremath{\mbox{\textup{\texttt{gfs}}}}}
\newcommand{\gcs}{\ensuremath{\mbox{\textup{\texttt{gcs}}}}}
\newcommand{\Eq}[1]{\ensuremath{\llbracket #1\rrbracket}}
\newcommand{\dom}{\mbox{\textup{dom}}}
\newcommand{\ran}{\mbox{\textup{ran}}}
\newcommand{\res}{\restriction}
\newcommand{\st}{\ensuremath{\::\:}}
\newcommand{\selective}{selective}
\begin{document}

\title[Simultaneous Reducibility of Pairs of Borel Equivalence Relations]{Simultaneous Reducibility of Pairs of \\ Borel Equivalence Relations}

\author{Scott Schneider}
\address{Department of Mathematics, University of Michigan, 530 Church Street, Ann Arbor, MI 48109, U.S.A.}
\email{sschnei@umich.edu}
\keywords{Borel reductions, smooth equivalence relations, Borel parametrizations}
\subjclass[2010]{Primary 03E15; Secondary 28A05}

\begin{abstract}
Let $E\subseteq F$ and $E'\subseteq F'$ be Borel equivalence relations on the standard Borel spaces $X$ and $Y$, respectively. The pair $(E,F)$ is \emph{simultaneously Borel reducible} to the pair $(E',F')$ if there is a Borel function $f:X\to Y$ that is both a reduction from $E$ to $E'$ and a reduction from $F$ to $F'$. Simultaneous Borel embeddings and isomorphisms are defined analogously. We classify all pairs $E\subseteq F$ of smooth countable Borel equivalence relations up to simultaneous Borel bireducibility and biembeddability, and a significant portion of such pairs up to simultaneous Borel isomorphism. We generalize Mauldin's notion of \emph{Borel parametrization} \cite{MauldinBP} in order to identify large natural subclasses of pairs of smooth countable equivalence relations and of singleton smooth (not necessarily countable) equivalence relations for which the natural combinatorial isomorphism invariants are complete, and we present counterexamples outside these subclasses. Finally, we relate isomorphism of smooth equivalence relations and of pairs of smooth countable equivalence relations to Borel equivalence of Borel functions as discussed in Komisarski, Michalewski, and Milewski \cite{KMM}. \end{abstract}

\maketitle

\section{Introduction}\label{sec:intro}

Given equivalence relations $E$ and $F$ on sets $X$ and $Y$, a \emph{reduction} from $E$ to $F$ is a function $f:X\to Y$ such that $x\mathrel{E}y\Leftrightarrow f(x)\mathrel{F}f(y)$ for all $x,y\in X$. The study of reductions between equivalence relations in the Borel setting has been a significant area of research in descriptive set theory for at least the past twenty-five years. In this setting $X$ and $Y$ are \emph{Polish} (i.e., separable and completely metrizable) spaces, $E$ and $F$ are Borel subsets of the product spaces $X^2$ and $Y^2$, and $f$ is Borel measurable.

Recently there has been some attention focused on understanding pairs $E\subseteq F$ of definable equivalence relations. See, for instance, Miller \cite{BenThesis}, \cite{Ben}, Pinciroli \cite{Pin}, and Thomas \cite{STconvsbor} in the countable Borel setting, Motto Ros \cite{Luca} for pairs of analytic equivalence relations, and Feldman, Sutherland, Zimmer \cite{FSZ} and Danilenko \cite{Dan98}, \cite{Dan00} in the context of ergodic theory and orbit equivalence. Motivated by this interest, we define a \emph{simultaneous reduction} from the pair $E\subseteq F$ to the pair $E'\subseteq F'$ to be a function that is at once a reduction from $E$ to $E'$ and a reduction from $F$ to $F'$. Injective simultaneous reductions are called \emph{embeddings} and bijective ones \emph{isomorphisms}. The purpose of this paper is to initiate a study of certain classes of pairs of countable (i.e., having countable classes) Borel equivalence relations considered up to simultaneous Borel bireducibility and, to a lesser extent, biembeddability and isomorphism.

As the study of just singleton countable Borel equivalence relations under Borel reducibility is already quite difficult, a study of pairs will naturally begin with classes of equivalence relations for which the singletons are already well understood. The countable Borel equivalence relations that are best understood are the smooth and the hyperfinite ones. $E$ is \emph{smooth} if there is a Borel reduction from $E$ to equality of reals, and \emph{hyperfinite} if $E$ is the union of an increasing sequence of Borel equivalence relations with finite classes. It is easy (granting some deep results from the classical theory) to classify smooth countable equivalence relations up to Borel bireducibility, biembeddability, and isomorphism, and in \cite{DJK} Dougherty, Jackson, and Kechris classify hyperfinite equivalence relations up to these same notions of equivalence using results from ergodic theory.

In this paper we concentrate on the simplest class, and consider pairs $E\subseteq F$ of smooth countable equivalence relations. Perhaps surprisingly, classifying smooth countable pairs up to simultaneous Borel isomorphism turns out to be difficult and is closely related to the problem of classifying smooth (not necessarily countable) equivalence relations up to Borel isomorphism. We will identify combinatorial invariants for simultaneous Borel bireducibility, biembeddability, and isomorphism of arbitrary pairs of countable Borel equivalence relations, and prove that the first two of these invariants are complete for smooth countable pairs (Theorems \ref{thm:SmoothPairsRedSuf} and \ref{thm:SmoothPairsEmbSuf}). While we do not obtain a complete classification of smooth countable pairs up to simultaneous Borel isomorphism, we will be able to isolate a large natural subclass of such pairs on which our combinatorial invariant is complete (Theorem \ref{thm:SmoothPairsIsomBP}), and provide counterexamples outside this class (Theorem \ref{thm:SmoothPairsNotBP}). 
We also discuss the relative complexities of the isomorphism problems for smooth equivalence relations and pairs of smooth countable equivalence relations, and relate each to Borel equivalence of Borel functions as introduced in \cite{KMM}.

The study of simultaneous Borel isomorphism of smooth countable pairs leads us to generalize a notion from Mauldin \cite{MauldinBP} and define \emph{Borel parametrizations} of equivalence relations. We show that the class of smooth equivalence relations admitting a Borel parametrization is in some sense the largest natural subclass of Borel equivalence relations for which the obvious combinatorial isomorphism invariant is complete (Theorem \ref{thm:BP}). Relating to work of Komisarski, Michalewski, and Milewski \cite{KMM}, we identify a natural class of Borel functions strictly larger than the class of bimeasurable functions with the property that any two equivalent Borel functions from this class are in fact Borel equivalent to each other, and we argue that this is the largest meaningful class having this property.

The interest in sub-equivalence relations is partly motivated by the fact that some of the most important open problems concerning countable Borel equivalence relations involve sub-relations. These include, for instance, the problems associated with weakly universal countable Borel equivalence relations and the union problem for hyperfinite equivalence relations. The latter asks whether the union of an increasing sequence of hyperfinite equivalence relations is hyperfinite. Since singleton hyperfinite equivalence relations are already well-understood \cite{DJK} and the union problem involves chains $F_0\subseteq F_1\subseteq\cdots$ of hyperfinite equivalence relations, a natural next step in studying hyperfinite equivalence relations would be attempting to understand the manner in which one can lie inside another as a sub-relation. So the ideal class of pairs $E\subseteq F$ to study is the class of hyperfinite pairs, and it is the author's hope that such pairs will be studied from the perspective of simultaneous reducibility in future work.

We note that there has already been some work done on understanding hyperfinite pairs $E\subseteq F$ from a slightly different perspective than the one taken here. Building on \cite{BenThesis}, Miller \cite{Ben} and Pinciroli \cite{Pin} study equivalence relations on quotient spaces $X/E$ where $E$ is a countable Borel equivalence relation on the Polish space $X$. Here a set $B\subseteq X/E$ is \emph{Borel} if its lifting $\tilde{B}=\{x\in X\st [x]_E\in B\}$ is Borel, and a set $R\subseteq X/E\times Y/E'$ in a product of such quotients is \emph{Borel} if its lifting $\tilde{R}=\{(x,y)\st ([x]_E,[y]_{E'})\in R\}$ is Borel in $X\times Y$. Then a function $f:X/E\to Y/E'$ is \emph{Borel} if its graph is Borel, or equivalently if there is a Borel function $\tilde{f}:X\to Y$ (a \emph{lifting} of $f$) satisfying $\tilde{f}(x)\in f([x]_E)$ for all $x\in X$.  For countable Borel equivalence relations $E\subseteq F$ on $X$, the equivalence relation $F/E$ is defined on the quotient space $X/E$ by $[x]_E\mathrel{F/E}[y]_E\Leftrightarrow x\mathrel{F}y$. If $E'\subseteq F'$ is another such pair, $F/E$ and $F'/E'$ are \emph{isomorphic} if there is a bijective Borel reduction from $F/E$ to $F'/E'$. Miller \cite{Ben} classifies all finite Borel equivalence relations on the (unique up to isomorphism) hyperfinite quotient space $2^\omega/E_0$, and Pinciroli \cite{Pin} proves a version of the well-known Feldman-Moore representation theorem for a certain class of equivalence relations on quotient Borel spaces.

The relationship between Miller's framework and ours can be explained as follows.  Let $E\subseteq F$ and $E'\subseteq F'$ be countable Borel equivalence relations on $X$ and $Y$. If $f:X/E\to Y/F$ is an isomorphism from $F/E$ to $F'/E'$ in the sense of \cite{Ben}, then any lifting $\tilde{f}$ of $f$ is a simultaneous Borel reduction from the pair $(E,F)$ to the pair $(E',F')$ whose range meets every $E'$ class in $Y$. Conversely, any simultaneous Borel reduction $f$ from $(E,F)$ to $(E',F')$ induces an isomorphism between $F/E$ and the restriction of $F'/E'$ to the $E'$-saturation of $\ran(f)$. On the other hand, a simultaneous Borel isomorphism from $(E,F)$ to $(E',F')$ easily induces an isomorphism from $F/E$ to $F'/E'$, and in general simultaneous isomorphism is a stronger notion than isomorphism of quotients.

The remainder of this paper is organized as follows. In Section \ref{sec:prelim} we recall some notions from the theory of Borel equivalence relations and establish notation and terminology. In Section \ref{sec:singletons} we identify combinatorial invariants for Borel bireducibility, biembeddability, and isomorphism of Borel equivalence relations, recall that these first two invariants are complete for smooth equivalence relations, and identify Borel parametrized equivalence relations as a large natural class for which the third is complete. We then re-examine these results in the context of Borel equivalence of Borel functions as studied in \cite{KMM}. In Section \ref{sec:pairs0} we introduce combinatorial invariants for simultaneous Borel bireducibility, biembeddability, and isomorphism of pairs of countable Borel equivalence relations, and in Section \ref{sec:pairs1} we show that the first two of these invariants are complete for smooth countable pairs. Finally, in Section \ref{sec:pairs2} we again use Borel parametrizations in identifying a large natural subclass of smooth countable pairs for which the simultaneous isomorphism invariant is complete, and then we conclude with a discussion of the relative complexities of Borel equivalence of Borel functions, isomorphism of smooth equivalence relations, and simultaneous isomorphism of smooth countable pairs.

\smallskip

\noindent \textbf{Acknowledgements}. I would like to thank Andreas Blass and Rachel Basse for helpful discussions on the material in this paper.

\section{Preliminaries}\label{sec:prelim}

We will need a number of facts from descriptive set theory that will be well-known to experts but perhaps less well-known generally, so we include most of this material in Appendix \ref{sec:App1} at the end of the paper. In the present section we develop the background we will need concerning Borel equivalence relations, and establish some terminology and notation that is mostly (but not entirely) standard.

\subsection{Equivalence relations}

An equivalence relation $E$ is \emph{countable} if each $E$-class is countable, and \emph{finite} if each $E$-class is finite. If $E$ and $F$ are equivalence relations on sets $X$ and $Y$, a \emph{reduction} from $E$ to $F$ is a function $f:X\to Y$ such that $x\mathrel{E}y\Leftrightarrow f(x)\mathrel{F}f(y)$ for all $x,y\in X$. An injective reduction is called an \emph{embedding} and a bijective reduction an \emph{isomorphism}. $E$ is \emph{reducible} to $F$, written $E\leq F$, if there is a reduction from $E$ to $F$, \emph{embeddable} in $F$, written $E\sqsubseteq F$, if there is an embedding from $E$ to $F$, and \emph{isomorphic} to $F$, written $E\cong F$, if there is an isomorphism from $E$ to $F$.

A \emph{standard Borel space} is a measurable space $(X,\mathcal{B})$ for which there exists a Polish (i.e., separable and completely metrizable) topology on $X$ whose $\sigma$-algebra of Borel sets is $\mathcal{B}$. If $X$ is a standard Borel space, a set $A\subseteq X$ is \emph{analytic} if $A$ is the image of some standard Borel space under a Borel function, and \emph{coanalytic} if $X\setminus A$ is analytic. An equivalence relation $E$ on the standard Borel space $X$ is \emph{Borel} (\emph{analytic}, etc.) if $E$ is a Borel subset of $X\times X$. Throughout this paper we always work in the Borel setting. This means that equivalence relations live on standard Borel spaces and are Borel unless we explicitly mention otherwise, and all reductions are required to be Borel even if we fail to say so explicitly. It is customary to write a subscript ``$B$" in the Borel setting, and we use the following notation which is quite standard:
\[
\begin{array}{lll}
E\leq_BF        & \Longleftrightarrow & \mbox{$E$ is Borel reducible to $F$;} \\
E\sqsubseteq_BF & \Longleftrightarrow & \mbox{$E$ is Borel embeddable in $F$;} \\
E\cong_BF       & \Longleftrightarrow & \mbox{$E$ is Borel isomorphic to $F$;} \\
E\sim_BF        & \Longleftrightarrow & \mbox{$E$ is Borel \emph{bireducible} with $F$, i.e., $E\leq_BF\wedge F\leq_BE$;} \\
E\approx_BF     & \Longleftrightarrow & \mbox{$E$ is Borel \emph{biembeddable} with $F$, i.e., $E\sqsubseteq_BF\wedge F\sqsubseteq_BE$.}
\end{array}
\]

Let $E\subseteq F$ be equivalence relations on $X$ and let $A\subseteq X$. We write $[x]_E=\{y\in X\st x\mathrel{E}y\}$ for the $E$-equivalence class of $x\in X$, and $X/E=\{[x]_E\st x\in X\}$ for the quotient space of $E$-classes. We write $[A]_E=\{x\in X\st (\exists y\in A)\,x\mathrel{E}y\}$ for the $E$-saturation of $A$, and say that $A$ is $E$\emph{-invariant} if $[A]_E=A$. If $X$, $E$, and $A$ are Borel then $[A]_E$ is analytic in general and is Borel if $E$ is countable. We write $E\res A=\{(x,y)\in A^2\st x\mathrel{E}y\}$ for the restriction of $E$ to $A$, and $(E,F)\res A$ for the pair $(E\res A,F\res A)$. The equivalence relation $F/E$ on $X/E$ is defined by $[x]_E\mathrel{F/E}[y]_E\Leftrightarrow x\mathrel{F}y$. We will denote the equality relation on $X$ by $\Delta(X)$, and the indiscrete equivalence relation on $X$ by $I(X)=X^2$. If $G$ is an equivalence relation on $Y$, then $E\times G$ is the equivalence relation on $X\times Y$ defined by $(x,y)\mathrel{E\times G}(x',y')\Leftrightarrow x\mathrel{E}x'\wedge y\mathrel{G}y'$.

For a set $P\subseteq X\times Y$ in a product space we write $\pi_X(P)=\{x\in X\st (\exists y)(x,y)\in P\}$ for the projection of $P$ onto $X$, and $\pi_Y$ for the projection of $P$ onto $Y$. Sometimes instead we write $\pi_1$ ($\pi_2$) for the projection onto the first (second) factor, especially when $X=Y$. For $x\in X$ and $y\in Y$ we write $P_x$ and $P^y$ for the vertical and horizontal sections
\[
P_x=\{y\in Y\st (x,y)\in P\}, \quad P^y=\{x\in X\st (x,y)\in P\}.
\]
By the \emph{vertical section equivalence relation} on $P$ we mean the equivalence relation $E$ defined by $(x,y)\mathrel{E}(x',y')\Leftrightarrow x=x'$; the horizontal section equivalence relation is defined analogously.

\subsection{Smooth equivalence relations} Proofs of the claims in this section are standard, so we omit them. An equivalence relation satisfying the equivalent conditions of Proposition \ref{prop:smooth} is said to be \emph{smooth}. Note that every smooth equivalence relation is Borel.

\begin{proposition} Let $E$ be an equivalence relation on the standard Borel space $X$. Then the following are equivalent:
\begin{enumerate}
 \item there is a standard Borel space $Y$ such that $E\leq_B\Delta(Y)$;
 \item for every uncountable standard Borel space $Y$, $E\leq_B\Delta(Y)$;
 \item $E$ admits a countable Borel separating family, i.e., a family $(B_n)$ of $E$-invariant Borel subsets of $X$ such that for all $x,y\in X$, $x\mathrel{E}y$ if and only if $\forall n[x\in B_n\Leftrightarrow y\in B_n]$. \hfill $\square$
\end{enumerate}
\label{prop:smooth}
\end{proposition}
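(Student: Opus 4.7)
The plan is to prove the cycle $(2) \Rightarrow (1) \Rightarrow (3) \Rightarrow (2)$, which is the cleanest route given that all three conditions are formulated in quite concrete terms.

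First, $(2) \Rightarrow (1)$ is immediate, since uncountable standard Borel spaces exist (e.g.\ $2^\omega$).

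For $(1) \Rightarrow (3)$, I would start with a Borel reduction $f : X \to Y$ witnessing $E \leq_B \Delta(Y)$. Fix a Polish topology on $Y$ compatible with its Borel structure and let $(U_n)_{n \in \omega}$ be a countable base. Then set $B_n = f^{-1}(U_n)$. Each $B_n$ is Borel since $f$ is, and each $B_n$ is $E$-invariant because $f$ sends each $E$-class to a single point (this is the ``$\Rightarrow$'' direction of $f$ being a reduction to equality). Finally, because the $U_n$ separate points of $Y$, we have $f(x) = f(y)$ iff $\forall n\,[x \in B_n \Leftrightarrow y \in B_n]$, and the left-hand side is equivalent to $x \mathrel{E} y$ since $f$ is a reduction. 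So $(B_n)$ is a countable Borel separating family.

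For $(3) \Rightarrow (2)$, given such a family $(B_n)$, define $f : X \to 2^\omega$ by $f(x)(n) = 1$ iff $x \in B_n$. Each coordinate map is Borel (because each $B_n$ is Borel), so $f$ is Borel into $2^\omega$ with the product topology. By the separating property, $f(x) = f(y) \Leftrightarrow x \mathrel{E} y$, i.e., $f$ is a reduction of $E$ to $\Delta(2^\omega)$. To upgrade this to every uncountable standard Borel space $Y$, invoke the classical fact (which the paper's preliminary section presumably takes for granted) that any two uncountable standard Borel spaces are Borel isomorphic; composing $f$ with a Borel isomorphism $2^\omega \to Y$ yields the desired reduction $E \leq_B \Delta(Y)$.

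None of the steps involve a serious obstacle; the only nontrivial ingredient used is the Borel isomorphism theorem for uncountable standard Borel spaces, and perhaps the existence of a countable base generating the Borel structure, both of which are standard facts from descriptive set theory that the paper is entitled to cite freely in its preliminaries.
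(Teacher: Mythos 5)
Your proof is correct and is exactly the standard argument: the paper explicitly omits the proofs of the propositions in this subsection as standard, and what you have written (pull back a countable base along the reduction to get the separating family; code the separating family as a map into $2^\omega$; transfer to an arbitrary uncountable $Y$ via the Borel isomorphism theorem, which the paper records as Fact~\ref{fact:kuratowski}) is precisely the intended argument. No gaps.
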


If $E$ is an equivalence relation on the standard Borel space $X$, the \emph{quotient} Borel structure on the set $X/E$ of $E$-classes is the largest $\sigma$-algebra making the canonical surjection $\pi_E:x\mapsto [x]_E$ measurable. Explicitly, $B\subseteq X/E$ is Borel in the quotient iff $\pi_E^{-1}(B)=\cup B$ is Borel in $X$. If $E$ is a Borel equivalence relation on $X$, the measurable space $X/E$ with its quotient Borel structure may or may not be standard Borel; if it is, then we will call $E$ \emph{standard}.

\begin{proposition} Let $E$ be an equivalence relation on the standard Borel space $X$. Then the following are equivalent:
\begin{enumerate}
 \item the quotient Borel structure on $X/E$ is standard Borel;
 \item there exists a standard Borel space $Y$ and a Borel reduction $f:X\rightarrow Y$ from $E$ to $\Delta(Y)$ such that $\ran(f)$ is Borel;
 \item $E$ is smooth and for any standard Borel space $Y$ and Borel reduction $f:X\rightarrow Y$ from $E$ to $\Delta(Y)$, $\ran(f)$ is Borel;
 \item for any standard Borel space $Y$ of cardinality $|X/E|$, there exists a surjective Borel reduction from $E$ to $\Delta(Y)$. \hfill $\square$
\end{enumerate}
\label{prop:standard}
\end{proposition}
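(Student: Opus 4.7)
The plan is to establish the cycle $(1) \Rightarrow (4) \Rightarrow (2) \Rightarrow (1)$ and then separately the chain $(1) \Rightarrow (3) \Rightarrow (2)$. The implications $(1) \Rightarrow (4)$, $(4) \Rightarrow (2)$, and $(3) \Rightarrow (2)$ should all be essentially immediate. For $(1) \Rightarrow (4)$, when $X/E$ is standard Borel, the projection $\pi_E$ is itself a surjective Borel reduction from $E$ to $\Delta(X/E)$, and any other standard Borel space of the same cardinality is Borel isomorphic to $X/E$ by the classical Borel isomorphism theorem; here I would invoke Silver's dichotomy to guarantee that $|X/E|$ always lies in $\{0,1,\ldots,\aleph_0,\cc\}$, so that (4) is never vacuous. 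For $(4) \Rightarrow (2)$ and $(3) \Rightarrow (2)$ there is essentially nothing to check, since a surjective reduction trivially has Borel range.

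The substantive content is in $(2) \Rightarrow (1)$. Given a Borel reduction $f: X \to Y$ with Borel range $A = \ran(f)$, I would observe that $A$ is standard Borel and that $f$ descends to a bijection $g: X/E \to A$ satisfying $g \circ \pi_E = f$. The goal is then to show $g$ is a measurable isomorphism between $A$ and $X/E$ equipped with the quotient Borel structure, which forces $X/E$ to be standard Borel. Since $\pi_E^{-1}(g^{-1}(C)) = f^{-1}(C)$ for any $C \subseteq A$, the problem reduces to showing that $C \subseteq A$ is Borel if and only if $f^{-1}(C)$ is Borel in $X$. The forward direction is just Borel measurability of $f$; the reverse direction is the crux: since $f: X \to A$ is a Borel surjection, both $C = f(f^{-1}(C))$ and $A \setminus C = f(f^{-1}(A \setminus C))$ are analytic in $A$, so $C$ is Borel by Luzin's separation theorem. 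This is the main obstacle I anticipate.

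For $(1) \Rightarrow (3)$, the map $\pi_E$ itself witnesses smoothness, and for any Borel reduction $f': X \to Y'$ the induced factorization $f' = g' \circ \pi_E$ produces an injection $g': X/E \to Y'$. Borelness of $g'$ is immediate from Borelness of $f'$ together with the definition of the quotient structure, so $g'$ is a Borel injection between standard Borel spaces; the Lusin--Souslin theorem then yields that $\ran(g') = \ran(f')$ is Borel in $Y'$.
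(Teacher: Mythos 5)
The paper explicitly omits the proof of this proposition as standard, so there is nothing to compare against; your argument is correct and is exactly the standard one. In particular you rightly identify the two points where real content lives: the Suslin/Luzin separation argument in $(2)\Rightarrow(1)$ showing that $C\subseteq\ran(f)$ is Borel whenever $f^{-1}(C)$ is, and the appeal to Silver's theorem to ensure $|X/E|\in\C$ so that condition (4) is never vacuously satisfied.
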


If $E$ is an equivalence relation on $X$, a \emph{transversal} for $E$ is a subset $T\subseteq X$ that meets each $E$-class in exactly one point, and a \emph{selector} for $E$ is a function $\sigma:X\to X$ whose graph is contained in $E$ and whose image is a transversal.

\begin{proposition} Let $E$ be an equivalence relation on the standard Borel space $X$. Then the following are equivalent:
\begin{enumerate}
 \item $E$ admits a Borel transversal;
 \item $E$ admits a Borel selector. \hfill $\square$
\end{enumerate}
\label{prop:stronglysmooth}
\end{proposition}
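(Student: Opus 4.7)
The direction (2) $\Rightarrow$ (1) is essentially immediate. Given a Borel selector $\sigma : X \to X$, note that an element $y \in X$ lies in $\ran(\sigma)$ if and only if $\sigma(y) = y$, since $\sigma$ maps every $E$-class to its unique chosen representative, which must therefore be a fixed point of $\sigma$. Hence $T := \ran(\sigma) = \{x \in X \st \sigma(x) = x\}$ is the preimage of the (Borel) diagonal of $X \times X$ under the Borel map $x \mapsto (\sigma(x), x)$, and so is Borel; it is a transversal by the definition of ``selector.''

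For (1) $\Rightarrow$ (2), the plan is the standard graph argument. Given a Borel transversal $T$, define $\sigma : X \to X$ by letting $\sigma(x)$ be the unique element of $T \cap [x]_E$. Then the graph of $\sigma$ is precisely
\[
\mathrm{graph}(\sigma) = E \cap (X \times T),
\]
which is Borel, since $E$ and $T$ are. It only remains to invoke the fact that any function between standard Borel spaces whose graph is Borel is itself Borel. The usual argument is that for Borel $B \subseteq X$, both
\[
\sigma^{-1}(B) = \pi_X\bigl(\mathrm{graph}(\sigma) \cap (X \times B)\bigr) \quad \text{and} \quad X \setminus \sigma^{-1}(B) = \sigma^{-1}(X \setminus B)
\]
are projections of Borel sets and hence analytic, so by Souslin's theorem $\sigma^{-1}(B)$ is Borel.

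The main (though routine) obstacle is the last step: establishing that Borel graph implies Borel function, which one could alternatively justify using Lusin–Novikov uniformization applied to the Borel set $E \cap (X \times T)$, whose vertical sections are singletons. Everything else is bookkeeping about transversals and selectors.
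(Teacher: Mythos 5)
Your proof is correct and is exactly the standard argument the paper has in mind (the paper explicitly omits the proofs in this subsection as routine): the fixed-point description of $\ran(\sigma)$ for (2)~$\Rightarrow$~(1), and for (1)~$\Rightarrow$~(2) the observation that $\mathrm{graph}(\sigma)=E\cap(X\times T)$ is Borel together with the fact that a function between standard Borel spaces with Borel graph is Borel. Both the Souslin-theorem justification and the Lusin--Novikov alternative you mention are valid ways to finish.
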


There appears to be no uniform terminology in the literature either for the equivalent properties of Proposition \ref{prop:standard} or for the equivalent properties of Proposition \ref{prop:stronglysmooth}. As mentioned above, we will call equivalence relations satisfying the conditions of Proposition \ref{prop:standard} \emph{standard}. The term \emph{strongly smooth} appears in \cite{KL} for the property of admitting a Borel transversal, but it does not appear to be widely used and we suggest the term \emph{\selective{}} and use it throughout this paper. It is immediate from Proposition \ref{prop:standard} that every standard Borel equivalence relation is smooth. Of course, we also have the following:

\begin{proposition} Let $E$ be an equivalence relation on the standard Borel space $X$. If $E$ is \selective{} then $E$ is standard. \hfill $\square$ \end{proposition}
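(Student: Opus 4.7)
The plan is to apply Proposition \ref{prop:stronglysmooth} to reduce the problem to finding a Borel selector, and then to exhibit that selector itself (viewed as a map into $X$) as the Borel reduction with Borel range required by condition (2) of Proposition \ref{prop:standard}.

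First I would invoke Proposition \ref{prop:stronglysmooth} to obtain a Borel selector $\sigma:X\to X$ for $E$, so that $\sigma(x)\mathrel{E}x$ for every $x\in X$ and $T:=\ran(\sigma)$ meets every $E$-class in exactly one point. Next I would observe that $T$ coincides with the fixed-point set $\{x\in X\st \sigma(x)=x\}$: indeed, if $x\in T$ then $\sigma(x)$ lies in $T$ and is $E$-related to $x$, so by the transversal property $\sigma(x)=x$; conversely any fixed point of $\sigma$ lies in $\ran(\sigma)=T$. Since $\sigma$ is Borel, $\{x\st \sigma(x)=x\}$ is Borel, hence so is $T$.

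I would then verify that $\sigma$, regarded as a Borel function from $X$ into the standard Borel space $X$ itself, is a reduction from $E$ to $\Delta(X)$: for any $x,y\in X$ one has $\sigma(x),\sigma(y)\in T$ with $\sigma(x)\mathrel{E}x$ and $\sigma(y)\mathrel{E}y$, so $x\mathrel{E}y$ iff $\sigma(x)\mathrel{E}\sigma(y)$ iff $\sigma(x)=\sigma(y)$, the last equivalence using that $T$ meets each class in a single point. Since $\ran(\sigma)=T$ is Borel, condition (2) of Proposition \ref{prop:standard} is satisfied, so $E$ is standard.

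There is no real obstacle here; the only subtle point is the identification of $T$ as the fixed-point set of $\sigma$, which is what allows us to conclude that the range is Borel without invoking a separate Lusin--Souslin type argument. The rest is a bookkeeping check that $\sigma$ itself serves as the reduction witnessing standardness.
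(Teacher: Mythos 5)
Your proof is correct. The paper omits the proof of this proposition as standard, and your argument is exactly the intended one: the selector $\sigma$ given by Proposition \ref{prop:stronglysmooth} is itself a Borel reduction from $E$ to $\Delta(X)$, and identifying its range with the Borel fixed-point set $\{x\in X\st\sigma(x)=x\}$ verifies condition (2) of Proposition \ref{prop:standard} without any appeal to Lusin--Souslin.
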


Thus \selective{} $\Rightarrow$ standard $\Rightarrow$ smooth. The next two basic examples show that these implications cannot be reversed (having all classes uncountable is not essential for this, but will be useful later).

\begin{example} Let $X$ be an uncountable standard Borel space and $P\subseteq X\times X$ a Borel set with all sections $P_x$ uncountable that does not admit a Borel uniformization (see \ref{ex:NotUniformizable}). Let $E$ be the vertical section equivalence relation on $P$. Then a Borel transversal for $E$ would be a Borel uniformization of $P$, so $E$ is not \selective{}. However, $[(x,y)]_E\mapsto x$ is a Borel isomorphism of $P/E$ with $X$, so $E$ is standard.\label{ex:nonselective} \end{example}

\begin{example} Let $X$ be an uncountable standard Borel space and $P\subseteq X\times X$ a Borel set with all nonempty sections $P_x$ uncountable and whose first projection is not Borel (see \ref{ex:AnalyticProjection}). Let $E$ be the vertical section equivalence relation on $P$. Then $X/E$ is Borel isomorphic to the first projection of $P$ and hence is not standard, even though $E$ is smooth as witnessed by the Borel reduction $(x,y)\mapsto x$ from $E$ to $\Delta(X)$.\label{ex:nonstandard} \end{example}

In Section \ref{sec:singletons} we will discuss equivalence relations admitting a \emph{Borel parametrization}, which is an even stronger property than admitting a Borel selector. But we point out that distinctions between these notions disappear in the context of \emph{countable} Borel equivalence relations.

\begin{fact} Let $E$ be a countable Borel equivalence relation on the standard Borel space $X$. Then the following are equivalent:
 \begin{enumerate}
  \item $E$ is smooth;
  \item $E$ is standard;
  \item $E$ is selective;
  \item $E$ is Borel parametrized (see Section \ref{sec:singletons});
  \item there exists a Borel partial order relation $\prec$ on $X$ such that the restriction of $\prec$ to each $E$-class has order type a subset of $\omega$.
  \item there is a sequence $T_n$ of Borel transversals for $E$ such that $\cup_nT_n=X$.
 \end{enumerate}
\label{fact:ctblSmooth}
\end{fact}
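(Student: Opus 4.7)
\smallskip

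\noindent\textbf{Plan.} The plan is to exploit that (3) $\Rightarrow$ (2) $\Rightarrow$ (1) is already in hand for all smooth equivalence relations; only the loop (1) $\Rightarrow$ (3) and the equivalences of (3) with (4), (5), and (6) remain, and all of them will be driven by a single tool---the Luzin--Novikov uniformization theorem, which lets me replace set-theoretic selection and enumeration arguments by Borel ones.

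For (1) $\Rightarrow$ (3) I would take a Borel reduction $f:X\to 2^\omega$ to $\Delta(2^\omega)$ and observe that, because its fibers are the countable $E$-classes, $f$ is countable-to-one. The classical theorem of Lusin on countable-to-one Borel maps---equivalently, Luzin--Novikov applied to $\{(y,x):f(x)=y\}$---then says that $\ran(f)$ is Borel and $f$ admits a Borel right inverse $g$, so that $T:=g(\ran(f))$ is a Borel transversal for $E$. For (3) $\Leftrightarrow$ (6), the direction (6) $\Rightarrow$ (3) is trivial; for the converse I would use Luzin--Novikov to write $E=\bigsqcup_n\mathrm{graph}(h_n)$ with Borel partial injections $h_n$ having pairwise disjoint graphs, so that $[x]_E=\{h_n(x):h_n\text{ defined at }x\}$, and then take $T_n:=h_n(T\cap\dom(h_n))$ as the desired Borel transversals covering $X$. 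Condition (4) (Borel parametrization, to be defined in Section \ref{sec:singletons}) should amount in the countable case to the existence of a Borel bijection $X\leftrightarrow T\times\omega$ carrying $E$ to the first-coordinate equivalence, and that is essentially a reformulation of (6) via the slices $T\times\{n\}$.

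For (6) $\Rightarrow$ (5) I would disjointify the $T_n$, define a rank $r(x)=n\Leftrightarrow x\in T_n$, and set $x\prec y\Leftrightarrow xEy\wedge r(x)<r(y)$, giving a Borel partial order whose restriction to each $E$-class has order type a subset of $\omega$. For (5) $\Rightarrow$ (3), every nonempty subset of $\omega$ has a minimum, so $\sigma(x):=\min_\prec[x]_E$ is well-defined as a selector; to show its graph is Borel I would again enumerate $[x]_E=\{h_n(x)\}$ via Luzin--Novikov and rewrite $\sigma(x)=y$ as $yEx\wedge\forall n\,(y=h_n(x)\vee y\prec h_n(x))$.

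The recurring obstacle---and in each implication the only real content---is Borel measurability: a naive selector or order argument quantifies over arbitrary $E$-classes, which a priori produces only coanalytic sets. Countability is precisely what rescues Borelness, since each universal class-quantifier becomes a countable conjunction over a Luzin--Novikov enumeration of the class; Examples \ref{ex:nonselective} and \ref{ex:nonstandard} confirm that without this hypothesis the implications genuinely fail.
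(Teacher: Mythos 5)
The paper itself offers no proof of this Fact: it falls under the blanket remark at the start of Section~\ref{sec:prelim} that the claims about smooth equivalence relations are standard and are omitted. Your argument is the standard one (Lusin--Novikov doing all the work, with countability converting class quantifiers into countable conjunctions over an enumeration $[x]_E=\{h_n(x)\}$), and every implication you sketch does go through, so there is no competing approach to compare against --- only two small repairs to record. First, in (3) $\Rightarrow$ (6), the set $T_n:=h_n(T\cap\dom(h_n))$ meets each $E$-class in \emph{at most} one point but misses every class whose transversal point lies outside $\dom(h_n)$, so it is not a transversal in the paper's sense (``exactly one point''); replace it by $h_n(T\cap\dom(h_n))\cup(T\setminus\dom(h_n))$, which meets each class exactly once, and whose union over $n$ still covers $X$ because $\{h_n(t)\st t\in T\cap\dom(h_n),\ n\in\omega\}$ exhausts $[t]_E$ for each $t\in T$. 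Second, your description of (4) as a Borel bijection $X\leftrightarrow T\times\omega$ matches Definition~\ref{df:BP} only when every class is infinite: the definition insists that $|Y_\m|=\m$, so when $E$ has finite classes you must first partition $X$ into the Borel pieces $X^{(E)}_m$ (Borel by \ref{fact:ctblPartition}) and parametrize each as $(T\cap X^{(E)}_m)\times Y_m$ with $|Y_m|=m$, using the uniform well-ordering of classes supplied by (5) to define the second coordinate; the version you wrote cannot carry $E$ to the first-coordinate equivalence on a piece with finite classes, since the fibers $\{t\}\times\omega$ are infinite. With these adjustments the proof is complete and is exactly the argument the paper is implicitly appealing to.
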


When (5) holds we say that $\prec$ orders each $E$-class in order type a subset of $\omega$ ``in a uniform Borel fashion."

\subsection{Split Borel equivalence relations}

We will use the following notation throughout. We write $\cc=2^{\aleph_0}$ and let
\[
\C \ := \ \omega\cup\{\aleph_0,\cc\} \quad \mbox{and} \quad \C^+ \ := \ \C\setminus\{0\},
\]
so that $\C^+$ is the set of possible cardinalities of nonempty Borel sets. We will generally try to use $i,j,k,l$ to range over $\omega$, $m,n$ to range over $\omega$ or occasionally $\omega\cup\{\aleph_0\}$, and $\m,\n$ to range over $\C$. If $\langle \kappa_i\st i\in I\rangle$ and $\langle \lambda_i\st i\in I\rangle$ are indexed families of cardinals with the same index set, we will write $\langle\kappa_i\rangle\leq\langle\lambda_i\rangle$ to mean $\kappa_i\leq\lambda_i$ for all $i\in I$. Given the Borel equivalence relation $E$ on the standard Borel space $X$, for each $\m\in\C^+$ let
\[
X^{(E)}_{\m} \ := \ \{x\in X\::\: |[x]_E|=\m\},
\]
and define $X^{(E)}_{\leq\m}$, $X^{(E)}_{<\m}$, etc., in the obvious manner. If $E$ is clear from context we may just write $X_{\m}$ instead of $X^{(E)}_{\m}$.

Now we need a technical notion for which there is no standard term in the literature. If $E$ is a Borel equivalence relation on $X$, then $X^{(E)}_{\cc}$ is analytic and each $X^{(E)}_{n}$, $1\leq n\leq\omega$, is coanalytic (see \ref{fact:AnalyticRep} and \ref{fact:genunicity}). We will say that $E$ \emph{splits} if $X^{(E)}_{\cc}$ is Borel. If $E$ splits, then in fact $X^{(E)}_{\m}$ is Borel for each $\m\in\C^+$ (see \ref{prop:card}). A split Borel equivalence relation need not be smooth, and a smooth split equivalence relation need not be standard, as witnessed by Example \ref{ex:nonstandard}. Similarly, a standard split equivalence relation need not be \selective{}, as witnessed by Example \ref{ex:nonselective}. On the other hand, it is also easy to construct a smooth (even \selective{}) equivalence relation that does not split.

\begin{example} Let $X$ and $Y$ be uncountable standard Borel spaces, and let $C\subseteq X\times Y$ be a Borel set with each section $C_x$ nonempty and countable, so that in particular $C$ admits a Borel uniformization $T\subseteq C$. Let $A\subseteq X$ be a non-Borel analytic set, and using \ref{fact:AnalyticRep} let $U\subseteq X\times Y$ be a Borel set with each nonempty section $U_x$ uncountable whose projection is $A$. Let $B=C\cup U$, and let $E$ be the vertical section equivalence relation on $B$. Then $T$ is a Borel transversal for $E$, so $E$ is selective. However $E$ does not split, since if $B_{\leq\omega}^{(E)}$ were Borel then $\pi_X\big(B_{\leq\omega}^{(E)}\big)=X\setminus A$ would be Borel. \end{example}

Thus we have the following diagram of implications, none of which can be reversed (see Section \ref{sec:singletons} for the definition of a Borel parametrized equivalence relation).

\begin{figure}[h]
\[
\begin{array}{ccccccc}
\mbox{smooth and countable} & \Rightarrow & \mbox{Borel parametrized} & \Rightarrow & \mbox{split} & & \\
& & \Downarrow & &  & & \\
& & \mbox{\selective{}} & \Rightarrow & \mbox{standard} & \Rightarrow & \mbox{smooth}
\end{array}
\]
\caption{A diagram of implications between properties of equivalence relations.}
\end{figure}

\section{Smooth equivalence relations and Borel parametrizations}\label{sec:singletons}

We begin by identifying the natural combinatorial invariants for $\sim_B$, $\approx_B$, and $\cong_B$ of Borel equivalence relations.

\begin{definition}\label{df:shape}
Suppose $E$ is a Borel equivalence relation on the standard Borel space $X$. For each cardinal $\m\in\C^+$, let $\n_{\m}(E)$ be the number of $E$-classes of size $\m$, let $\n_{\geq\m}(E)$ be the number of $E$-classes of size at least $\m$, and let
\[
\n(E) := |X/E| = \sum_{\m\in\C^+}\n_{\m}(E)
\]
be the total number of $E$-equivalence classes. Define the \emph{fine shape} of $E$ to be the sequence
\[
\fs(E) := \langle\n_{\m}(E)\st \m\in\C^+\rangle
\]
and the \emph{coarse shape} of $E$ to be the sequence
\[
\cs(E) := \langle \n_{\geq\m}(E)\st \m\in\C^+\rangle.
\]
\end{definition}

In the appendix we show that $\n_\cc(E)\in\C$, $\n_{\m}(E)\in\C\cup\{\aleph_1\}$ for all $\m\in\C^+$, and if $E$ splits then in fact $\n_{\m}(E)\in\C$ for all $\m\in\C^+$ (see \ref{prop:card}). Of course, under AD (or CH) we will have each $\n_{\m}(E)\in\C$ regardless, but the above restrictions on fine shape are the only ones that are provable in ZFC. Indeed, by \ref{prop:pathological} it is consistent that for every function $\alpha:\C^+\to\C\cup\{\aleph_1\}$ such that $\alpha(\cc)=\cc$, there is a smooth equivalence relation $E$ with $\fs(E)=\alpha$.

It is a trivial exercise to check that the combinatorial notions of Definition \ref{df:shape} provide necessary conditions for the existence of Borel reductions, embeddings, and isomorphisms between Borel equivalence relations.

\begin{proposition}\label{prop:shape} Let $E$ and $F$ be Borel equivalence relations.
\begin{enumerate}
 \item[(i)] If $E\leq_BF$, then $\n(E)\leq\n(F)$.
 \item[(ii)] If $E\sqsubseteq_BF$, then $\cs(E)\leq \cs(F)$.
 \item[(iii)] If $E\cong_BF$, then $\fs(E)=\fs(F)$. \hfill $\square$
\end{enumerate}
\end{proposition}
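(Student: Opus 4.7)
The plan is to prove all three parts uniformly by the same device: given a reduction $f:X\to Y$ from $E$ to $F$, I define the induced quotient map $\tilde f:X/E\to Y/F$ by $\tilde f([x]_E)=[f(x)]_F$, and then show that increasing levels of regularity on $f$ (reduction, embedding, isomorphism) translate into increasingly refined combinatorial information preserved by $\tilde f$. No Borelness is really needed for the conclusion; only the reduction property and the corresponding injectivity/surjectivity of $f$ matter for counting.

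For part (i), I observe that the defining biconditional $x\mathrel{E}y \Leftrightarrow f(x)\mathrel{F}f(y)$ makes $\tilde f$ both well-defined (forward direction) and injective (backward direction). So $\tilde f$ exhibits an injection $X/E\hookrightarrow Y/F$, which gives $\n(E)\leq\n(F)$.

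For part (ii), I additionally use that $f$ itself is injective. For any $E$-class $C=[x]_E$, the image $f(C)$ is contained in $[f(x)]_F$ by the forward direction of the reduction property, and $|f(C)|=|C|$ by injectivity of $f$; so $|[f(x)]_F|\geq|C|$. Hence $\tilde f$ sends the set of $E$-classes of size $\geq\m$ injectively into the set of $F$-classes of size $\geq\m$, giving $\n_{\geq\m}(E)\leq\n_{\geq\m}(F)$ for every $\m\in\C^+$, i.e., $\cs(E)\leq\cs(F)$.

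For part (iii), I strengthen the inclusion $f([x]_E)\subseteq[f(x)]_F$ from the previous step to an equality by using surjectivity: any $y\in[f(x)]_F$ is $f(z)$ for some $z\in X$, and $f(z)\mathrel{F}f(x)$ forces $z\mathrel{E}x$, so $y\in f([x]_E)$. Combined with injectivity of $f$ this yields $|[x]_E|=|[f(x)]_F|$ for every $x$, so $\tilde f$ is a bijection $X/E\to Y/F$ that preserves cardinality class by class, whence $\n_\m(E)=\n_\m(F)$ for every $\m\in\C^+$ and $\fs(E)=\fs(F)$. There is no genuine obstacle here; the only thing to be careful about is confirming the two-sided equality $f([x]_E)=[f(x)]_F$ in part (iii), since this is exactly where surjectivity enters and without it one only gets an inequality in one direction.
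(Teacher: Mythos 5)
Your proof is correct, and it is precisely the standard verification the paper has in mind: the paper explicitly calls this a ``trivial exercise'' and omits the proof entirely (note the $\square$ at the end of the statement), so your argument via the induced map $\tilde f:X/E\to Y/F$ and the chain of observations $f([x]_E)\subseteq[f(x)]_F$, $|f([x]_E)|=|[x]_E|$ under injectivity, and $f([x]_E)=[f(x)]_F$ under bijectivity is exactly what is intended. Your remark that no Borelness is needed is also accurate.
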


We now discuss the extent to which (i), (ii), and (iii) of Proposition \ref{prop:shape} admit converses. Of course, the largest class of Borel equivalence relations for which the converses could conceivably hold is the class of smooth equivalence relations. It is well-known, and an easy consequence of Silver's theorem (\ref{fact:Silver}), that $\n(E)$ is indeed a complete invariant for Borel bireducibility of smooth equivalence relations.

\begin{proposition}\label{prop:smoothred} Let $E$ and $F$ be smooth equivalence relations on the standard Borel spaces $X$ and $Y$, respectively. Then $E\leq_BF$ if and only if $\n(E)\leq\n(F)$. In particular, $E\sim_BF$ if and only if $\n(E)=\n(F)$. \hfill $\square$ \end{proposition}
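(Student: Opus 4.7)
The plan is to verify necessity by an obvious descent argument and then handle sufficiency by a two-case analysis driven by Silver's theorem. For necessity, any Borel reduction $f:X\to Y$ from $E$ to $F$ induces an injection $[x]_E\mapsto[f(x)]_F$ from $X/E$ into $Y/F$, so $\n(E)\leq\n(F)$; the bireducibility statement then follows immediately from applying this to both $f$ and a reduction in the other direction.

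For sufficiency, assume $\n(E)\leq\n(F)$. Since $E$ and $F$ are Borel, hence coanalytic, Silver's theorem forces $\n(E),\n(F)\in\C$; that is, each is either at most $\aleph_0$ or exactly $\cc$. In the first case $\n(E)\leq\aleph_0$, I would fix a Borel reduction $g:X\to 2^\omega$ from $E$ to $\Delta(2^\omega)$ (existing by smoothness of $E$) and observe that the $E$-classes $g^{-1}(\{g(x)\})$ form an at most countable Borel partition of $X$. Enumerating the distinct classes as $C_0,C_1,\ldots$ and selecting a matching sequence of pairwise $F$-inequivalent points $y_0,y_1,\ldots$ in $Y$ (which exist since $\n(F)\geq\n(E)$), the function sending each $C_i$ constantly to $y_i$ is piecewise Borel on countably many Borel pieces, hence Borel, and is manifestly a reduction.

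In the remaining case $\n(E)=\cc$, one necessarily has $\n(F)=\cc$, so Silver's theorem applied to $F$ yields a perfect set $P\subseteq Y$ of pairwise $F$-inequivalent points. Fixing a Borel isomorphism $\pi:2^\omega\to P$ and a Borel reduction $g:X\to 2^\omega$ from $E$ to $\Delta(2^\omega)$, the composition $f=\pi\circ g$ is Borel, and the $F$-inequivalence of the points of $P$ together with the injectivity of $\pi$ give the chain $f(x)\mathrel{F}f(x')\Leftrightarrow f(x)=f(x')\Leftrightarrow g(x)=g(x')\Leftrightarrow x\mathrel{E}x'$, so $f$ is a Borel reduction from $E$ to $F$. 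There is no real obstacle here: the proposition is essentially an immediate corollary of Silver's theorem combined with the definition of smoothness, and the only mild observation needed is that smoothness of $F$ is irrelevant to the argument on the target side—being Borel suffices for Silver to produce the perfect transversal.
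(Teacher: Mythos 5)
Your proof is correct, and it is exactly the argument the paper has in mind: the paper omits the proof, noting only that the proposition is ``an easy consequence of Silver's theorem,'' and your two-case analysis (countably many classes handled by a piecewise-constant Borel map, $\cc$ many handled by composing a reduction to $\Delta(2^\omega)$ with a Silver-provided copy of $\Delta(2^\omega)$ inside $F$) is the standard way to carry that out. Your closing observation that only $E$ needs to be smooth while $F$ need only be Borel is also accurate.
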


It is perhaps less well-known that $\cs(E)$ is a complete invariant for Borel biembeddability of smooth equivalence relations.

\begin{proposition}\label{prop:smoothemb} Let $E$ and $F$ be smooth equivalence relations on the standard Borel spaces $X$ and $Y$, respectively. Then $E\sqsubseteq_BF$ if and only if $\cs(E)\leq\cs(F)$. In particular, $E\approx_BF$ if and only if $\cs(E)=\cs(F)$. \end{proposition}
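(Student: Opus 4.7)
The forward direction is immediate from Proposition~\ref{prop:shape}(ii), so I focus on the converse. Suppose $\cs(E)\leq\cs(F)$; the plan is to construct a Borel embedding $f:X\to Y$. The core strategy is to match the $E$-classes to distinct $F$-classes of at least the same size and, within each matched pair, apply the Kuratowski isomorphism theorem to Borel-inject the $E$-class into the $F$-class. The hypothesis $\cs(E)\leq\cs(F)$ is precisely what ensures that the required size-respecting matching exists combinatorially, while smoothness gives Borel control over the individual classes (each being a Borel fiber of the smooth reduction).

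Splitting via Silver's dichotomy, one first handles the case $\n(E)\leq\aleph_0$. Fix Borel reductions $p:X\to Z$ and $q:Y\to W$ from $E$ and $F$ to equality; each $E$-class is then the Borel fiber $C_n:=p^{-1}(z_n)$ for some $z_n\in p(X)$, and one may enumerate the $z_n$. Using $\cs(E)\leq\cs(F)$, select distinct $w_n\in q(Y)$ with $|q^{-1}(w_n)|\geq|C_n|$; by Kuratowski's isomorphism theorem there exist Borel injections $f_n:C_n\to D_n:=q^{-1}(w_n)$. The glued map $f$, defined by $f|C_n=f_n$, is the desired Borel embedding, since the $C_n$ partition $X$ into countably many Borel pieces.

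The harder case is $\n(E)=\cc$, where I would split further according to class size $\m\in\C^+$. The main obstacle is that the sets $X^{(E)}_\m$ and $Y^{(F)}_{\geq\m}$ need not be Borel when $E$ or $F$ fails to split (cf.\ Examples~\ref{ex:nonselective} and~\ref{ex:nonstandard}). To handle this, I would work on the parameter side through $p$ and $q$: identify each $E$-class of size $\m$ with its image in $p(X)\subseteq Z$, apply Silver's theorem to the appropriate analytic/coanalytic subsets of $Z$ (and of $W$) to extract a perfect set of $\m$-sized $E$-class parameters, and use $\cs(E)\leq\cs(F)$ to match these Borel-bijectively to a perfect set of parameters for $F$-classes of size $\geq\m$. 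Kuratowski's isomorphism theorem, applied fiberwise and uniformly in the parameter, then produces Borel injections of each $E$-class into its matched $F$-class; these assemble into a Borel embedding of $X$ into $Y$. The principal technical difficulty is ensuring that these fiberwise injections can be selected uniformly in a Borel fashion and glued into a single Borel function on all of $X$---precisely the kind of issue that motivates the \emph{Borel parametrization} framework developed later in this section.
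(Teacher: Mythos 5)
The forward direction and your first case ($\n(E)\leq\aleph_0$) are fine. The problem is that your case $\n(E)=\cc$ rests on a step that is false as stated: ``Kuratowski's isomorphism theorem, applied fiberwise and uniformly in the parameter'' is not a theorem. Getting a \emph{uniformly Borel} family of injections (let alone isomorphisms) of the fibers of one Borel set into the fibers of another is exactly the obstruction that Borel parametrizations are designed to measure, and Mauldin's example (Example \ref{ex:MauldinBP}, i.e.\ \cite[3.2]{MauldinBP}) exhibits a closed set with all sections uncountable for which no such uniform fiberwise correspondence with $[0,1]\times[0,1]$ exists. You flag this as ``the principal technical difficulty'' but do not resolve it, and it is the entire content of the hard case. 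There is a second, independent gap: extracting a \emph{perfect set} of parameters of $\m$-sized $E$-classes via Silver's theorem does not exhaust the (possibly non-Borel) set of all such parameters, so even granting the uniform fiberwise injections your map $f$ would not be defined on all of $X^{(E)}_\m$; and the matchings for different values of $\m$ would additionally have to be made disjoint inside sets of parameters that are only analytic or coanalytic.

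The paper's proof avoids both problems by splitting on the structure of $F$ rather than on $\n(E)$. When $\n_\cc(F)=\cc$, no size-matching is attempted at all: Mauldin's theorem on bimeasurable functions (Fact \ref{prop:Mauldin}) supplies a perfect set $P$ of parameters and a single Borel isomorphism of $2^\omega\times P$ onto a set meeting continuum many uncountable $F$-classes fiberwise --- crucially, one only needs to embed a copy of $2^\omega$ \emph{into} each fiber, not onto it --- and then \emph{all} of $E$ is embedded at once via $x\mapsto(\alpha_x,g(x))$, where $\alpha_x\in 2^\omega$ codes membership of $x$ in a countable base and $g$ reduces $E$ to $\Delta(P)$. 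When $\n_\cc(F)\leq\aleph_0$, the sets $X_\m$, $Y_\m$ \emph{are} Borel, the finitely or countably many uncountable classes are handled by hand, and the residual problem concerns only countable classes, where the uniform fiberwise injections you want really do exist --- not by Kuratowski but by the uniform Borel well-orderings of classes from Fact \ref{fact:ctblSmooth}. If you want to salvage your outline, you must replace the appeal to a fiberwise Kuratowski theorem by these two devices.
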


\begin{proof} The forward direction holds for any Borel equivalence relations by Proposition \ref{prop:shape}. For the converse, we consider several cases.

\underline{Case 1}: $X$ is countable. In this case any map defined on $X$ will be Borel, so we may freely use the axiom of choice. Well-order $X$ as $X=\{x_n \st n<|X|\}$, and inductively define $f(x_n)$ as follows. If there is no $m<n$ such that $x_m\mathrel{E}x_n$, let $\m\geq |[x_n]_E|$ be least for which there exists an $F$-class of size $\m$ disjoint from $\{f(x_i) \st i<n\}$, and define $f(x_n)$ to be any element in any such $F$-class; otherwise, if $m<n$ is such that $x_m\mathrel{E}x_n$, let $f(x_n)$ be any element in $[f(x_m)]_F\setminus\{f(x_i) \st i<n\}$. Such choices always exist since $\cs(E)\leq\cs(F)$, and the function $f$ thus defined is an embedding.

\underline{Case 2}: $\n_\cc(F)\leq\aleph_0$, so that in particular $X_\m$ and $Y_\m$ are Borel for each $\m\in\C^+$. In this case let
\[
\{[x_k]_E \st k<\n_\cc(E)\} \quad \mbox{and} \quad \{[y_k]_F \st k<\n_\cc(F)\}
\]
enumerate the $E$-classes and $F$-classes in $X_\cc$ and $Y_\cc$ respectively, noting that $\n_\cc(E)\leq\n_\cc(F)$. If $\n_\cc(F)$ is finite, then for each $k<\n_\cc(E)$ let $f_k$ be a Borel isomorphism from $[x_k]_E$ to $[y_k]_F$, and if $\n_\cc(F)=\aleph_0$, then for each $k<\n_\cc(E)$ let $f_k$ be a Borel isomorphism from $[x_k]_E$ to $[y_{2k}]_F$. Then $f=\cup f_k$ is a Borel embedding from $E\res X_\cc$ to $F\res Y_\cc$ such that 
\[
\cs(E\res X_{\leq\omega}) \ \leq \ \cs(F\res Y\setminus [\ran(f)]_F).
\]
Hence without loss of generality we may assume that $\n_\cc(E)=0$, so that $E$ is a smooth countable equivalence relation. For the rest of Case 2 we make this assumption.

\underline{Case 2a}: there is largest $m\leq\omega$ such that $\n_m(F)$ is uncountable. Fix such $m$, and fix also a Borel reduction $f:X_{\leq m}\to Y_m$ from $E\res X_{\leq m}$ to $F\res Y_m$. Using \ref{fact:ctblSmooth}, well-order each $E$-class in $X_{\leq m}$ and each $F$-class in $Y_m$ in order type $\omega$ or finite in a uniform Borel manner, and then define an injective Borel reduction $g:X_{\leq m}\rightarrow Y_m$ from $E\res X_{\leq m}$ to $F\res Y_m$ by sending the $n$th element of the $E$-class $[x]_E$ to the $n$th element of the $F$-class $[f(x)]_F$. Note that $\cs(E\res X_{>m})\leq\cs(F\res Y\setminus [\ran(g)]_F)$ and that $X_{>m}$ is countable, so now we are finished by Case 1.

\underline{Case 2b}: there is no largest $m\leq\omega$ such that $\n_m(F)$ is uncountable. If $\n(F)$ is countable then we are back in Case 1, so we may assume that there exist finite $m_0<m_1<\cdots$ such that for each $i$, $\n_{m_i}(F)$ is uncountable. For each $i$ let
\[
Z_i \ = \ X_{\leq m_i}\setminus \left(\bigcup_{j<i}X_{\leq m_j}\right).
\]
Then for each $i$ there is a Borel embedding $g_i$ from $E\res Z_i$ to $F\res Y_{m_i}$ by the argument given in Case 2a. Then $\cup_ig_i$ is a Borel embedding from $E\res X_{<\omega}$ to $F\res Y_{<\omega}$, and by assumption $\n_\omega(E)\leq\n_{\geq\omega}(F)\leq\aleph_0$, so $E\res X_\omega\sqsubseteq_B F\res Y_{\geq\omega}$ by Case 1. This completes Case 2.

\underline{Case 3}: $\n_\cc(F)=\cc$. Using the fact that $F$ is smooth, fix a Borel reduction $f:Y\rightarrow\mathcal{N}$ from $F$ to $\Delta(\mathcal{N})$. Then $\mbox{graph}(f)$ is a Borel subset of $Y\times\mathcal{N}$ such that the set
\[
U:=\{\alpha\in\mathcal{N}\st f^{-1}(\{\alpha\})\mbox{ is uncountable}\}
\]
is uncountable. Hence by Fact \ref{prop:Mauldin} there is a nonempty perfect set $P\subseteq U$ and a Borel isomorphism $\phi$ of $2^\omega\times P$ onto a subset $R$ of $\mbox{graph}(f)$ such that for each $p\in P$, $\phi\res 2^\omega\times\{p\}$ is a Borel isomorphism onto $R^p\times\{p\}$. Fix a compatible Polish topology on $X$ with countable base $(B_n)$. Using the fact that $E$ is smooth, let $g:X\rightarrow P$ be a Borel reduction from $E$ to $\Delta(P)$, and define the Borel embedding $g':X\rightarrow 2^\omega\times P$ from $E$ into $I(2^\omega)\times\Delta(P)$ by $g'(x)=(\alpha_x,g(x))$, where $\alpha_x(n)=1\Leftrightarrow x\in B_n$. Now $\pi_Y\circ\phi\circ g':X\rightarrow Y$ is a Borel embedding from $E$ into $F$. \end{proof}

For $\cong_B$, we have the following partial converse of Proposition \ref{prop:shape}(iii) which will later be generalized in Theorem \ref{thm:BP}.

\begin{proposition} Let $E$ and $F$ be smooth equivalence relations with at most countably many uncountable equivalence classes. Then $E\cong_BF$ if and only if $\fs(E)=\fs(F)$. \label{prop:smoothisom} \end{proposition}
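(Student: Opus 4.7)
The forward direction is immediate from Proposition \ref{prop:shape}(iii); for the converse, assume $\fs(E)=\fs(F)$. My plan is to split $X$ and $Y$ into their uncountable and countable parts, build a Borel isomorphism on each part, and paste the pieces. The key preliminary observation is that because $E$ is smooth, every individual $E$-class is Borel in $X$ (being the fiber over a point of any Borel reduction to $\Delta(\mathcal{N})$); combined with the hypothesis that $\n_\cc(E)\leq\aleph_0$, this makes $X_\cc^{(E)}$ a countable union of Borel sets and hence Borel, so $X_{\leq\omega}^{(E)}$ is Borel too, and the same for $F$ and $Y$.

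On the uncountable part I will enumerate the uncountable $E$- and $F$-classes as $\{[x_k]_E\}$ and $\{[y_k]_F\}$, noting that $\fs(E)=\fs(F)$ forces $\n_\cc(E)=\n_\cc(F)$ and each such class to be an uncountable standard Borel space in its own right. Kuratowski's isomorphism theorem then supplies a Borel bijection $f_k\colon[x_k]_E\to[y_k]_F$ for each $k$, and the disjoint union $\bigcup_k f_k$ is Borel since its domain is a Borel partition into countably many Borel pieces.

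On the countable part, $E\res X_{\leq\omega}^{(E)}$ and $F\res Y_{\leq\omega}^{(F)}$ are smooth countable Borel equivalence relations, and they split (their uncountable parts being empty), so $X_m^{(E)}$ and $Y_m^{(F)}$ are Borel for each $m\in\omega\cup\{\omega\}$ with $m\geq 1$, with $\n_m(E)=\n_m(F)$ by hypothesis. Fact \ref{fact:ctblSmooth} supplies Borel partial orderings of $X_{\leq\omega}^{(E)}$ and $Y_{\leq\omega}^{(F)}$ that enumerate each class in order type a subset of $\omega$ in a uniformly Borel fashion, yielding Borel selectors $\sigma_E,\sigma_F$ and Borel ``position'' functions $\rho_E,\rho_F$. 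For each $m$ the resulting Borel transversals inside $X_m^{(E)}$ and $Y_m^{(F)}$ have the same cardinality $\n_m(E)\in\C$; any Borel bijection $\tau_m$ between them, extended by sending $x\in X_m^{(E)}$ to the unique $y\in[\tau_m(\sigma_E(x))]_F$ with $\rho_F(y)=\rho_E(x)$, yields a Borel isomorphism between $E\res X_m^{(E)}$ and $F\res Y_m^{(F)}$. Assembling these over the countably many values of $m$ and combining with the uncountable-part bijection produces the desired $f\colon X\to Y$.

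The only real obstacle is the initial structural point: ensuring that the decomposition $X=X_\cc^{(E)}\sqcup X_{\leq\omega}^{(E)}$ is Borel, which is precisely where the hypothesis of at most countably many uncountable classes is consumed. After that, Kuratowski's theorem trivializes the uncountable part and the uniform Borel ordering from Fact \ref{fact:ctblSmooth} reduces the countable part to matching transversals of equal cardinality.
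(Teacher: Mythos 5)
Your proposal is correct and follows essentially the same route as the paper: the same Borel decomposition $X=X^{(E)}_\cc\sqcup\bigsqcup_m X^{(E)}_m$ (with Borelness of $X^{(E)}_\cc$ coming from the countably-many-uncountable-classes hypothesis), Kuratowski's isomorphism theorem class-by-class on the uncountable part, and on the countable part a Borel bijection between the class spaces of $E\res X_m$ and $F\res Y_m$ extended by matching positions under the uniform Borel ordering from Fact \ref{fact:ctblSmooth}. The only cosmetic difference is that you phrase the countable-part matching via Borel transversals where the paper uses the standard Borel quotients $X_m/E\to Y_m/F$; these are interchangeable for smooth countable equivalence relations.
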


\begin{proof} The forward direction follows from Proposition \ref{prop:shape}. For the converse, first decompose $X$ and $Y$ into the Borel sets
\[
X=\bigsqcup_{\m\in\C^+}X_{\m}\quad\mbox{and}\quad Y=\bigsqcup_{\m\in\C^+}Y_{\m}.
 \]
Here $X_\cc$ and $Y_\cc$ are Borel because $\n_\cc(E)=\n_\cc(F)\leq\aleph_0$, and consequently each $X_{\m}$, $Y_{\m}$ is Borel by \ref{prop:card}. Now we can write
\[
X_\cc=\bigsqcup_{k<\n^E_\cc}[x_k]_E\quad\mbox{and}\quad Y_\cc=\bigsqcup_{k<\n^F_\cc}[y_k]_F
 \]
and use the isomorphism theorem (\ref{fact:kuratowski}) to obtain Borel isomorphisms $f_k:[x_k]_E\rightarrow [y_k]_F$, $k<\n_\cc(E)=\n_\cc(F)$. Then $\cup_kf_k$ is a Borel isomorphism of $E\res X_\cc$ with $F\res Y_\cc$, and we may turn our attention to $X_{\leq\omega}$ and $Y_{\leq\omega}$.

Fix $m\leq\omega$. $E\res X_m$ and $F\res Y_m$ are smooth countable equivalence relations, so $X_m/E$ and $Y_m/F$ are standard Borel spaces. Since
\[
|X_m/E| \ = \ \n_m(E) \ = \ \n_m(F) \ = \ |Y_m/F|,
\]
we may fix a Borel isomorphism $\tilde{g}_m$ from $X_m/E$ to $Y_m/F$. Using \ref{fact:ctblSmooth}, order each $E$-class in $X_m$ and each $F$-class in $Y_m$ in order type $m$ in a uniform Borel manner, and for each $x\in X_m$ let $k(x)<m$ be the position of $x$ within $[x]_E$ under this ordering. Define the function $g_m:X_m\to Y_m$ by using $\tilde{g}_m$ and matching up the orderings, so that $g_m(x)$ is the $k(x)$th element in the $F$-class $\tilde{g}_m([x]_E)$. Then $g_m$ is a Borel isomorphism from $E\res X_m$ to $F\res Y_m$, and $\bigcup_mg_m$ is a Borel isomorphism from $E\res X_{\leq\omega}$ to $F\res Y_{\leq\omega}$, completing the proof. \end{proof}

One might hope to extend Proposition \ref{prop:smoothisom} to arbitrary smooth equivalence relations, but we can easily observe that this is impossible. (Note that Proposition \ref{prop:smoothisom} and Example \ref{ex:FSnotcomplete} may be viewed as analogues, respectively, of $(ii)\Rightarrow (i)$ and $(i)\Rightarrow (ii)$ of \cite[Proposition 11]{KMM}).

\begin{example} There exist smooth equivalence relations $E$ and $F$ such that $\fs(E)=\fs(F)$ but $E\not\cong_BF$.\label{ex:FSnotcomplete} \end{example}

\begin{proof} Let $E$ be a smooth but not selective equivalence relation with uncountably many equivalence classes all of which are uncountable, as in Example \ref{ex:nonselective}. Let $F$ be the vertical section equivalence relation on $\mathcal{N}\times\mathcal{N}$. Then $\fs(E)=\fs(F)$ but $E\not\cong_BF$, since $F$ is \selective{} and $E$ is not. \end{proof}

We will see below that there exists a non-isomorphic pair of \selective{} equivalence relations all of whose uncountably many equivalence classes are uncountable, so that fine shape fails to be a complete invariant even in this specialized case. First we characterize a natural class of smooth equivalence relations for which fine shape \emph{is} a complete isomorphism invariant.

\begin{definition}\label{df:BP} Let $E$ be an equivalence relation on the standard Borel space $X$. A \emph{Borel parametrization} of $E$ is a Borel bijection
\[
\phi:X\to\bigsqcup_{\m\in\C^+}\left(Z_{\m}\times Y_{\m}\right),\quad\mbox{where}
\]
\begin{enumerate}
 \item[(i)] $(Z_\m \st \m\in\C^+)$ is a pairwise disjoint sequence of (possibly empty) standard Borel spaces;
 \item[(ii)] $(Y_{\m} \st \m\in\C^+)$ is a sequence of standard Borel spaces such that for each $\m\in\C^+$, $|Y_{\m}|=\m$; and
 \item[(iii)] for all $x,y\in X$, $x\mathrel{E}y\,\Leftrightarrow\,(\pi_1\circ\phi)(x)=(\pi_1\circ\phi)(y)$.
\end{enumerate}
We say that $E$ is \emph{Borel parametrized} if $E$ admits a Borel parametrization.
\end{definition}

It is easy to see that Borel parametrized equivalence relations with the same fine shape are Borel isomorphic to each other. Indeed, let $Y_\cc=2^\omega\times\{\cc\}$ and for each $1\leq\m\leq\omega$ let $Y_{\m}=\m\times\{\m\}$, so that $Y_{\m}$ is an explicit Polish space of cardinality $\m$ for each $\m\in\C^+$, and then for any $\alpha:\C^+\to\C$ let $\mathcal{E}(\alpha)$ be the equivalence relation
\[
\mathcal{E}(\alpha)\::=\:\bigsqcup_{\m\in\C^+}\left(\Delta(Y_{\alpha(\m)})\times I(Y_{\m})\right)
\]
defined on the standard Borel space
\[
Y(\alpha)\::=\:\bigsqcup_{\m\in\C^+}\left(Y_{\alpha(\m)}\times Y_{\m}\right).
\]
We make the following observations:
\begin{enumerate}
 \item for each $\alpha\in\C^{\C^+}$, $\mathcal{E}(\alpha)$ is a split, \selective{} Borel equivalence relation on $Y(\alpha)$;
 \item if $E$ is Borel parametrized, then $E\cong_B\mathcal{E}(\fs(E))$.
\end{enumerate}
In particular, every Borel parametrized equivalence relation is Borel, \selective{}, and splits, and if $E$ is Borel parametrized then we can think of $\mathcal{E}(\fs(E))$ as a convenient normal form representation of $E$.

\begin{theorem}\label{thm:BP}
If $E$ and $F$ are Borel parametrized equivalence relations, then $E\cong_BF$ if and only if $\fs(E)=\fs(F)$. Moreover, if $E$ is a smooth equivalence relation that is not Borel parametrized, then there exists a smooth equivalence relation $F$ such that $\fs(E)=\fs(F)$ but $E\not\cong_BF$.
\end{theorem}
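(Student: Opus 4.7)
The forward direction of the biconditional is Proposition~\ref{prop:shape}(iii). For the converse, recall observation~(2) recorded just before the theorem: every Borel parametrized equivalence relation $E$ is Borel isomorphic to the normal form $\mathcal{E}(\fs(E))$. Since Borel parametrized equivalence relations split, both $\fs(E)$ and $\fs(F)$ lie in $\C^{\C^+}$, so both normal forms are defined. Under the hypothesis $\fs(E)=\fs(F)$ this yields $E\cong_B\mathcal{E}(\fs(E))=\mathcal{E}(\fs(F))\cong_BF$, composing the two isomorphisms.

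For the moreover clause, the intended witness is $F:=\mathcal{E}(\fs(E))$, which by construction is Borel parametrized, hence smooth, and satisfies $\fs(F)=\fs(E)$. The crux is that being Borel parametrized is a Borel-isomorphism invariant: if $\psi:X\to Y$ is a Borel isomorphism between $E$ and $F$ and $\phi$ is a Borel parametrization of $F$, then the composite $\phi\circ\psi$ is a Borel bijection $X\to\bigsqcup_{\m}(Z_{\m}\times Y_{\m})$ whose first-coordinate projection is an $E$-class function, i.e., a Borel parametrization of $E$. Hence a hypothetical isomorphism $E\cong_BF$ would force $E$ to be Borel parametrized, contradicting the assumption.

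This argument is complete whenever $\mathcal{E}(\fs(E))$ is defined, i.e., whenever $\fs(E)\in\C^{\C^+}$; by~\ref{prop:card} this covers every smooth $E$ that splits as well as every smooth $E$ under CH. The remaining case---$\fs(E)(\m)=\aleph_1$ for some $\m\in\C^+$, which forces $\neg$CH and also forces $E$ not to split---is where I expect the main obstacle to lie, since $\mathcal{E}(\fs(E))$ is then unavailable. The plan is to invoke the pathological constructions of the appendix (\ref{prop:pathological}) to build a smooth $F$ of the prescribed fine shape that differs from $E$ in some Borel-isomorphism-invariant feature (for example, the descriptive complexity of its sets $Y_{\m}^{(F)}$, or the existence of a Borel selector on some $E$-invariant piece where $E$ has none). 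The delicate step will be to extract from ``$E$ is not Borel parametrized'' a concrete such invariant distinction, and then to arrange the pathological construction so that $F$ witnesses the opposite of it.
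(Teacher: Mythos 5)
Your proof of the biconditional and of the ``moreover'' clause in the case $\fs(E)\in\C^{\C^+}$ is correct and matches the paper: the paper likewise composes the isomorphisms $E\cong_B\mathcal{E}(\fs(E))$ and $F\cong_B\mathcal{E}(\fs(F))$ for the first claim, and for the second claim (when $E$ splits) uses exactly your observation that being Borel parametrized is preserved under Borel isomorphism, so that $\mathcal{E}(\fs(E))$ is a smooth relation of the same fine shape not isomorphic to $E$.

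The genuine gap is the case you flag at the end: $E$ does not split and some $\n_{\m}(E)=\aleph_1<\cc$, so that $\mathcal{E}(\fs(E))$ is unavailable. By \ref{prop:pathological} this case is consistently non-vacuous, so it must be handled, and your sketched plan --- realize the pathological fine shape and then ``extract from `$E$ is not Borel parametrized' a concrete invariant distinction'' --- is not a proof; it is unclear how to produce, for an arbitrary such $E$, a single isomorphism invariant separating $E$ from some realizable $F$ of the same fine shape. The paper's idea is different and sidesteps this difficulty entirely: for any non-split smooth $E$ (where necessarily $\n_\cc(E)=\cc$), it constructs \emph{two} smooth relations $F_1$ and $F_2$ with $\fs(F_1)=\fs(F_2)=\fs(E)$, one of which is provably not standard and the other provably standard. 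Concretely, starting from a Borel reduction $f:X\to 2^\omega$ of $E$ to equality, $F_1$ is the horizontal section relation on $\mathrm{graph}(f)$ together with a disjoint Borel set all of whose nonempty sections are uncountable and whose projection is not Borel, while $F_2$ is the horizontal section relation on $\mathrm{graph}(f)$ together with a Borel set with uncountable sections projecting onto the complement of the coanalytic set $\{y\st f^{-1}(y)\mbox{ is countable and nonempty}\}$. Since $F_1\not\cong_BF_2$, the relation $E$ fails to be isomorphic to at least one of them, and one never needs to decide which, nor to recover $\fs(E)$ canonically. You would need to supply this two-witness construction (or an equivalent) to close your remaining case.
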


\begin{proof} The first claim follows immediately from Proposition \ref{prop:shape} and observation (2) above.

For the second claim, suppose that $E$ is a smooth equivalence relation on the standard Borel space $X$ that is not Borel parametrized. If $E$ splits, then $\fs(E)$ is the fine shape of some Borel parametrized equivalence relation (namely $\mathcal{E}(\fs(E))$) which is not isomorphic to $E$. Hence we may assume that $E$ does not split, so in particular $\n_{\cc}(E)=\cc$. We will now complete the proof by constructing a pair of smooth equivalence relations $F_1$ and $F_2$ with the same fine shape as $E$ such that $F_2$ is standard and $F_1$ is not.

Fix a Borel reduction $f:X\to 2^\omega$ from $E$ to $\Delta(2^\omega)$. Let $P$ be a Borel subset of $2^\omega\times 2^\omega$ such that every nonempty section $P^\alpha$ of $P$ is uncountable but $\pi_2(P)$ is not Borel. Let
\[
Y\:=\:\mbox{graph}(f)\oplus P\:\subseteq\: (X\times 2^\omega)\oplus (2^\omega\times 2^\omega),
\]
and let $F_1$ be the horizontal section equivalence relation on $Y$ defined by $(x,\alpha)\mathrel{F_1}(x',\alpha')$ iff $\alpha=\alpha'$ and either $(x,\alpha)$, $(x',\alpha')$ both lie in $\mbox{graph}(f)$ or they both lie in $P$. Then $F_1$ has the same fine shape as $E$ and $F_1$ is not standard. On the other hand, by \ref{fact:genunicity} the set
\[
C=\{y\in 2^\omega \st f^{-1}(y)\mbox{ is countable and nonempty}\}
\]
is coanalytic. Let $B\subseteq X\times 2^\omega$ be a Borel set each of whose sections $B^\alpha$ is uncountable and whose second projection is $2^\omega\setminus C$. Let $F_2$ be the horizontal section equivalence relation on $B\cup\mbox{graph}(f)$. Then $F_2$ has the same fine shape as $E$, and $F_2$ is standard. \end{proof}

In light of this result, we can rephrase Proposition \ref{prop:smoothisom} as follows:

\begin{corollary}\label{cor:ctblBP} If $E$ is a smooth equivalence relation with at most countably many uncountable equivalence classes, then $E$ is Borel parametrized. In particular, any smooth countable equivalence relation is Borel parametrized. \hfill $\square$ \end{corollary}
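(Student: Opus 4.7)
The plan is to construct a Borel parametrization piece-by-piece according to class size, using the hypothesis on $\n_\cc(E)$ to control the ``uncountable'' stratum and Fact \ref{fact:ctblSmooth} to control each ``countable'' stratum. Since $\n_\cc(E)\leq\aleph_0$, the set $X_\cc$ is Borel (as mentioned around \ref{prop:card}), and hence so is every $X_\m$ for $\m\in\C^+$. This lets me decompose $X=\bigsqcup_{\m}X_\m$ into Borel pieces and parametrize each separately; since the sets $Z_\m\times Y_\m$ in Definition \ref{df:BP} are required to be pairwise disjoint, I can then take the disjoint union of the individual parametrizations without interference.

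For the uncountable stratum, enumerate the uncountable classes as $\{[x_k]_E\st k<\n_\cc(E)\}$. Each $[x_k]_E$ is a Borel subset of $X_\cc$ of cardinality $\cc$ (it is $E$-invariant and $E$ is Borel), so the Kuratowski isomorphism theorem (\ref{fact:kuratowski}) yields a Borel isomorphism $\psi_k:[x_k]_E\to Y_\cc$. Take $Z_\cc$ to be a discrete standard Borel space of cardinality $\n_\cc(E)$, and set $\phi(x):=(k,\psi_k(x))$ for $x\in[x_k]_E$. Because $X_\cc$ is covered by countably many Borel $E$-classes on which $\phi$ is individually Borel, $\phi\res X_\cc$ is a Borel bijection onto $Z_\cc\times Y_\cc$.

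For each $m$ with $1\leq m\leq\omega$, the relation $E\res X_m$ is smooth countable with every class of size exactly $m$. By Fact \ref{fact:ctblSmooth}(5), there is a Borel partial order $\prec_m$ on $X_m$ which restricts to each $E$-class as a well-ordering in order type $m$, uniformly in a Borel fashion. Let $Z_m\subseteq X_m$ consist of the $\prec_m$-minimal elements, so that $Z_m$ is a Borel transversal, and let $\sigma_m:X_m\to Z_m$ be the induced Borel selector. Define $k_m:X_m\to m=Y_m$ by letting $k_m(x)$ be the $\prec_m$-rank of $x$ within $[x]_E$; this is a Borel function since each level set $\{x\st k_m(x)=j\}$ is definable from the Borel set $\{(x,y)\st y\prec_m x\wedge y\mathrel{E}x\}$ (which has countable sections) via Lusin-Novikov. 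Set $\phi(x):=(\sigma_m(x),k_m(x))$. This map is a bijection $X_m\to Z_m\times Y_m$: injectivity is immediate from the two coordinates, and given $(z,k)$ the preimage is the unique element at $\prec_m$-rank $k$ in $[z]_E$. Since $\phi\res X_m$ is a Borel bijection between standard Borel spaces, it is a Borel isomorphism by Lusin-Souslin.

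Assembling these gives the desired Borel bijection $\phi:X\to\bigsqcup_{\m\in\C^+}(Z_\m\times Y_\m)$ satisfying (i)-(iii) of Definition \ref{df:BP} (properties (i) and (ii) by construction; (iii) because in each stratum $\pi_1\circ\phi$ is a Borel selector for $E$). The ``in particular'' statement is immediate, since a smooth countable $E$ has $\n_\cc(E)=0\leq\aleph_0$. There is no essential obstacle here: the only mildly delicate point is the uniform Borel enumeration of each countable class, which is handed to us verbatim by Fact \ref{fact:ctblSmooth}; the rest is bookkeeping glued together by the Kuratowski isomorphism theorem.
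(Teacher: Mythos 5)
Your construction is correct, and it is essentially the construction already carried out in the paper's proof of Proposition \ref{prop:smoothisom}: decompose $X$ into the Borel sets $X_{\m}$ (using $\n_\cc(E)\leq\aleph_0$ to see $X_\cc$ is Borel), handle $X_\cc$ class-by-class via the isomorphism theorem, and handle each $X_m$ via the uniform Borel orderings of Fact \ref{fact:ctblSmooth}. The paper instead obtains the corollary with no new work, by observing that Proposition \ref{prop:smoothisom} gives $E\cong_B\mathcal{E}(\fs(E))$ and that Borel parametrization is invariant under Borel isomorphism, so your direct assembly of the parametrization is just an unwinding of that argument.
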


Next we present an example due to Mauldin which shows that even within the class of \selective{} equivalence relations each of whose uncountably many equivalence classes is uncountable, fine shape is not a complete isomorphism invariant.

\begin{example} There exists a \selective{} equivalence relation $E$ each of whose uncountably many equivalence classes is uncountable such that $E$ is not Borel parametrized. \label{ex:MauldinBP} \end{example}

\begin{proof} By Mauldin \cite[3.2]{MauldinBP}, there is a closed subset $B\subseteq [0,1]\times [0,1]$ such that:
\begin{enumerate}
 \item[(i)] for each $x\in [0,1]$, $B_x$ is uncountable (and closed);
 \item[(ii)] there is no Borel isomorphism $g:[0,1]\times [0,1]\to B$ such that for each $x\in X$, the function $g(x,\cdot)$ maps $[0,1]$ onto $B_x$.
\end{enumerate}
Taking $B$ to realize this example, let $E$ be the vertical section equivalence relation on $B$. Since each $B_x$ is compact, $B$ admits a Borel uniformization by a classical result of Novikov (see for instance \cite[28.8]{Kechris}), and hence $E$ is \selective{}. Suppose now for contradiction that $E$ is Borel parametrized. Then in fact there is a Borel parametrization $\phi$ of $E$ taking values in $[0,1]\times [0,1]$. Given such a parametrization $\phi$, define the Borel automorphism $\sigma$ of $[0,1]$ by $\sigma(x)=(\pi_1\circ\phi^{-1})(x,0)$. Then the function $g:[0,1]\times [0,1]\to B$ defined by $$g(x,y)=(\, x,\;(\,\pi_2\circ\phi^{-1})(\sigma^{-1}(x),y)\,)$$ is a Borel isomorphism such that $g(x,\cdot)$ maps $[0,1]$ onto $B_x$, contradicting (ii). \end{proof}

\begin{remark} In \cite{MauldinBP}, Mauldin calls a Borel set $B$ in the product $X\times Y$ of Polish spaces \emph{Borel parametrized} if there exists a Borel set $Z\subseteq Y$ and a Borel isomorphism $g:X\times Z\to B$ such that for each $x\in X$, $g(x,\cdot)$ maps $Z$ onto $B_x$. The argument given in Example \ref{ex:MauldinBP} shows that if all nonempty sections of $B\subseteq X\times Y$ have the same cardinality, then $B$ is Borel parametrized in the sense of \cite{MauldinBP} if and only if the vertical section equivalence relation on $B$ is Borel parametrized in our sense. In light of this we consider our definition to be a natural generalization of Mauldin's. \end{remark}

Theorem \ref{thm:BP} leaves us with the following:

\begin{problem} Classify smooth equivalence relations (that are not Borel parametrized) up to Borel isomorphism. \label{prob:1} \end{problem}

\begin{remark}\label{rem:BS} Let $\mathcal{B}$ be the class of Borel parametrized equivalence relations. Theorem \ref{thm:BP} shows that fine shape is a complete isomorphism invariant on $\mathcal{B}$, and we argue that $\mathcal{B}$ is in some sense the largest natural class of Borel equivalence relations with this property. Given any fine shape function $\alpha$, if $\aleph_1$ is a value of $\alpha$ then there is no canonical way of recovering a smooth equivalence relation $E$ with $\fs(E)=\alpha$, and if $\aleph_1$ is not a value of $E$ then the most natural smooth equivalence relation $E$ with fine shape $\alpha$ is the Borel parametrized equivalence relation $\mathcal{E}(\alpha)$. Hence if $\mathcal{U}\not\subseteq\mathcal{B}$ is any collection of smooth equivalence relations on which fine shape is a complete isomorphism invariant, then $\mathcal{U}$ must contain some ``unnatural" smooth equivalence relation that cannot be recovered canonically from its fine shape. \end{remark}

In \cite{KMM}, Komisarski, Michalewski, and Milewski study \emph{equivalence} and \emph{Borel equivalence} of functions between Polish spaces, the latter of which is closely related to Borel isomorphism of smooth equivalence relations. Here functions $f,g:X\to Y$ between Polish spaces $X$ and $Y$ are \emph{equivalent} if there is a bijection $\phi:X\to X$ such that $f=g\phi$, and \emph{Borel equivalent} if there is a Borel such $\phi$. For convenience we introduce the following notation, which will appear again in Section \ref{sec:pairs2}. Given standard Borel spaces $X$ and $Y$ and a Borel function $g:X\to Y$, let $\Eq{g}$ denote the smooth equivalence relation on $X$ defined by
\[
x\mathrel{\Eq{g}}y \ \Longleftrightarrow \ g(x)=g(y).
\]
If Borel functions $f,g:X\to Y$ are Borel equivalent with witness $\phi$, then $\phi$ is a Borel isomorphism from $\Eq{f}$ to $\Eq{g}$.

The authors of \cite{KMM} prove that the Borel function $f:X\to Y$ is bimeasurable if and only if every Borel function $g:X\to Y$ that is equivalent to $f$ is in fact Borel equivalent to $f$ (\cite[Proposition 11]{KMM}). Recalling Purves' theorem \cite{Purves} that a Borel function $f:X\to Y$ is bimeasurable if and only if $\{y\in Y\st f^{-1}(\{y\})\mbox{ is uncountable}\}$ is countable, we see that the analogous result in our context is that if $E$ is smooth, then $\fs(E)$ determines $E$ up to Borel isomorphism within the class of smooth equivalence relations if and only if all but countably many $E$-classes are countable. Using the notion of Borel parametrization, we can state the analogue of our Theorem \ref{thm:BP} in the context of Borel equivalence of Borel functions as follows.

\begin{proposition}\label{prop:KMM} Let $X$, $Y$ be Polish spaces and $f,g:X\to Y$ Borel functions such that $\Eq{f}$ and $\Eq{g}$ are Borel parametrized. Then $f$ and $g$ are equivalent if and only if they are Borel equivalent. Moreover, if $h:X\to Y$ is a Borel function such that $\Eq{h}$ is not Borel parametrized, then there is Borel $h':X\to Y$ such that $h$ and $h'$ are equivalent but not Borel equivalent. \end{proposition}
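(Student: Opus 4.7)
My plan for part~1 is as follows. The direction ``Borel equivalent $\Rightarrow$ equivalent'' is immediate. For the converse, I will use the given parametrizations to build an explicit $\phi$. Suppose $f,g:X\to Y$ are equivalent with Borel parametrizations $\alpha:X\to\sqcup_{\m}(Z_{\m}^f\times Y_{\m})$ of $\Eq{f}$ and $\beta:X\to\sqcup_{\m}(Z_{\m}^g\times Y_{\m})$ of $\Eq{g}$. Setting $B:=f(X)=g(X)$, the functions $f$ and $g$ descend through $\pi_1\circ\alpha$ and $\pi_1\circ\beta$ to Borel injections $\tilde f:\sqcup_{\m} Z_{\m}^f\to Y$ and $\tilde g:\sqcup_{\m} Z_{\m}^g\to Y$ with common image $B$; by Lusin--Souslin $B$ is Borel and $\tilde f,\tilde g$ are Borel isomorphisms onto $B$. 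Since equivalence forces $|f^{-1}(\{y\})|=|g^{-1}(\{y\})|$ for each $y$, both maps carry their $Z_{\m}$-component onto the common set $B_{\m}:=\{y\in B:|f^{-1}(\{y\})|=\m\}$, so $\sigma:=\tilde g^{-1}\circ\tilde f$ is a Borel bijection $\sqcup Z_{\m}^f\to\sqcup Z_{\m}^g$ with $\sigma(Z_{\m}^f)=Z_{\m}^g$ for each $\m$. Defining $\phi(x):=\beta^{-1}(\sigma(z),t)$ for $\alpha(x)=(z,t)$ then gives a Borel bijection $X\to X$, and a direct calculation with the factorizations shows $g\circ\phi=f$.

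For part~2, the main case that drives the construction is when $\Eq{h}$ is standard and splits. I will use that $B:=h(X)$ is Borel (standardness) and each $X_{\m}:=h^{-1}(B_{\m})$ is Borel (splitting), whence each $B_{\m}$ is Borel via the induced Borel isomorphism $\bar h:X/\Eq{h}\to B$. For each $\m\in\C^+$ I fix a Polish $Y_{\m}$ of cardinality $\m$ and a Borel bijection $\rho_{\m}:X_{\m}\to B_{\m}\times Y_{\m}$, which exists since both are standard Borel of cardinality $|B_{\m}|\cdot\m$. Then $h'(x):=\pi_1(\rho_{\m}(x))$ for $x\in X_{\m}$ is a Borel function with $h'(X)=B$ and $|h'^{-1}(\{y\})|=\m=|h^{-1}(\{y\})|$ for each $y\in B_{\m}$, so $h$ and $h'$ are equivalent. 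The map $\sqcup_{\m}\rho_{\m}$ is a Borel parametrization of $\Eq{h'}$ with $Z_{\m}:=B_{\m}$, so $\Eq{h'}$ is Borel parametrized while $\Eq{h}$ is not, giving $\Eq{h}\not\cong_B\Eq{h'}$ and hence $h$ and $h'$ are not Borel equivalent.

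For the remaining cases, the constraint that equivalence forces $h'(X)=h(X)$ prevents me from making $\Eq{h'}$ Borel parametrized (since a Borel parametrized equivalence relation has Borel quotient and hence Borel image), so I will distinguish $\Eq{h'}$ from $\Eq{h}$ by another isomorphism invariant drawn from the proof of Theorem~\ref{thm:BP}. For instance, if $\Eq{h}$ is standard but does not split (so $B_{\cc}$ is analytic but not Borel), I will pick a Borel $C\subseteq X$ of cardinality $\cc$ and define $h'$ on $C$ as a Borel surjection onto $B_{\cc}$ with all fibers of size $\cc$ (obtained from any continuous surjection $\omega^\omega\to B_{\cc}$), and on $X\setminus C$ as a Borel function onto $B\setminus B_{\cc}$ with matching countable fiber sizes; the resulting $h'^{-1}(B_{\cc})=C$ is then Borel, so $\Eq{h'}$ splits while $\Eq{h}$ does not, giving $\Eq{h'}\not\cong_B\Eq{h}$. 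The main obstacle will be the fully non-standard case, where $h(X)$ is itself non-Borel; there I expect to adapt the construction of the non-standard $F_1$ from the proof of Theorem~\ref{thm:BP} to realize $\Eq{h'}$ as the section equivalence relation on an appropriate disjoint union built from $\mbox{graph}(h)$, using that $|\mbox{graph}(h)\oplus P|=|X|$ to transfer the construction back to $X$.
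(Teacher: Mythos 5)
Your proof of the first claim is correct and is essentially the paper's own argument: both proofs fix a basepoint $y_\m\in Y_\m$, use it to transport $f$ and $g$ to injective Borel maps on the index spaces $Z_\m$ and $Z_\m'$, and conjugate the resulting Borel bijection of index spaces back through the two parametrizations; your $\sigma=\tilde g^{-1}\circ\tilde f$ is exactly the paper's $\bigsqcup_\m\psi_\m$.

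The second claim is where your proposal has a genuine gap. The paper constructs nothing here: since $\Eq{h}$ is not Borel parametrized, Corollary \ref{cor:ctblBP} forces $h$ to have uncountably many uncountable fibers, hence $h$ is not bimeasurable by Purves' theorem, and \cite[Proposition 11]{KMM} then produces $h'$ directly. Your first case ($\Eq{h}$ standard and split) is correct, but your second case cannot be carried out. If $\Eq{h}$ is standard but does not split, then $B=h(X)$ is Borel while $B_\cc=\{y\in Y\st |h^{-1}(\{y\})|=\cc\}$ is not, so $B\setminus B_\cc$ is not Borel. Now suppose $h'$ is equivalent to $h$ and $C:=h'^{-1}(B_\cc)$ is Borel, as your construction requires. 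Equivalence forces $h'$ to have the same fiber cardinality as $h$ over every point of $Y$, so $X\setminus C=h'^{-1}(B\setminus B_\cc)$, and $h'\res (X\setminus C)$ is a countable-to-one Borel function on a Borel set with image exactly $B\setminus B_\cc$; by Lusin--Novikov that image would be Borel, a contradiction. Thus no function equivalent to $h$ can have a split associated equivalence relation, the invariant you propose cannot distinguish anything in this case, and the ``Borel function onto $B\setminus B_\cc$ with matching countable fiber sizes'' that you need simply does not exist. Your third case is only a statement of intent, so the second claim remains unproved outside the split-and-standard case; the clean way out is the paper's reduction to Purves' theorem and \cite[Proposition 11]{KMM}.
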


\begin{proof} Let $f,g:X\to Y$ be Borel with $\Eq{f}$ and $\Eq{g}$ Borel parametrized, and suppose $f$ and $g$ are equivalent. Let
\[
\phi_f:X\to\bigsqcup_{\m\in\C^+}\left(Z_\m\times Y_\m\right) \quad \mbox{and} \quad \phi_g:X\to\bigsqcup_{\m\in\C^+}\left(Z_\m'\times Y_\m\right)
\]
be Borel parametrizations of $\Eq{f}$ and $\Eq{g}$, respectively. Let $A_f\subseteq\C^+$ be the set of $\m\in\C^+$ such that $Z_{\m}\ne\emptyset$, or equivalently the set of $\m\in\C^+$ for which $f$ has some fiber of cardinality $\m$. Define $A_g\subseteq\C^+$ analogously. Since $f$ and $g$ are equivalent we have $A_f=A_g$, so we write simply $A=A_f=A_g$.  Now let $\m\in A$ be arbitrary.  Fix $y_\m\in Y_\m$, and using the fact that $f,g$ are equivalent define the bijection $\psi_\m:Z_\m\to Z_{\m}'$ so that for each $z\in Z_\m$, $\psi_{\m}(z)$ is the unique element of $Z_{\m}'$ such that
\[
f\bigg(\phi_f^{-1}(z,y_\m)\bigg) \ = \ g\bigg(\phi_g^{-1}(\psi_{\m}(z),y_\m)\bigg).
\]
Then $\psi_\m$ is Borel, and hence is an isomorphism. Write $\psi_\m\times\mbox{id}_\m$ for the map $(z,y)\mapsto (\psi_\m(z),y)$  from $Z_\m\times Y_\m$ to $Z_{\m}'\times Y_\m$. Letting $\m$ vary over $A$, write $\displaystyle{\psi=\bigsqcup_{\m\in A}(\psi_\m\times\mbox{id}_\m})$. Then
\[
\phi \ := \ \phi_g^{-1}\circ\psi\circ\phi_f
\]
is a Borel automorphism of $X$ such that $f=g\phi$. This proves the first claim. In light of Purves' theorem, the second follows immediately from Corollary \ref{cor:ctblBP} and \cite[Proposition 11]{KMM}. \end{proof}

Moreover, Proposition \ref{prop:KMM} is optimal in the sense of Remark \ref{rem:BS}; that is, the class of all Borel functions $f$ for which $\Eq{f}$ is Borel parametrized is the largest class of Borel functions whose Borel equivalence type can be recovered canonically from its equivalence type alone.

We conclude this discussion by observing that Borel equivalence of Borel functions is too strong to be an exact analogue of Borel isomorphism of smooth equivalence relations. Let us say that Borel functions $f,g:X\to Y$ are \emph{weakly equivalent} if there exist bijections $\phi:X\to X$ and $\psi:\ran(f)\to\ran(g)$ such that $\psi f=g\phi$, and \emph{weakly Borel equivalent} if there exist Borel measurable such $\phi$ and $\psi$. Clearly (Borel) equivalent functions are weakly (Borel) equivalent, and the converse can fail. Now the Borel functions $f$ and $g$ are weakly equivalent if and only if $\fs(\Eq{f})=\fs(\Eq{g})$, and weakly Borel equivalent if and only if $\Eq{f}\cong_B\Eq{g}$. We will return to these considerations at the end of Section \ref{sec:pairs2}.

\section{Combinatorics of pairs of countable Borel equivalence relations}\label{sec:pairs0}

In this section we introduce for pairs $E\subseteq F$ of countable Borel equivalence relations an analogue of ``shape" as it was defined in Section \ref{sec:singletons}. Here the notion of shape will be more complicated, since we will need to consider both local shapes describing the distribution of $E$-classes inside a particular $F$-class, and global shapes describing the distribution of local shapes across all the $F$-classes.

\begin{definition} Let $E\subseteq F$ be countable Borel equivalence relations on the standard Borel space $X$, and let $C\subseteq X$ be an $F$-class. Define the \emph{local fine (relative) shape} of $C$ to be the sequence
\[
\lfs^{(E,F)}(C) \ := \ \langle \n_m(E\res C)\st 1\leq m\leq\omega\rangle
\]
and the \emph{local coarse (relative) shape} of $C$ to be the sequence
\[
\lcs^{(E,F)}(C) \ := \ \langle \n_{\geq m}(E\res C)\st 1\leq m\leq\omega\rangle.
\]
\end{definition}

If the equivalence relations $E\subseteq F$ are clear from context, we might omit reference to them in the notation and write simply $\lfs(C)$ or $\lcs(C)$. Note that the local shape (fine or coarse) of an equivalence class is a function from $\C^+\setminus\{\cc\}$ to $\C\setminus\{\cc\}$ that is not constantly zero. We let $\Sf$ denote the set of all possible local fine shapes of equivalence classes, viewed as a Polish space in the obvious way as a homeomorph of Baire space. Equivalently, 
\[
\Sf \ := \ \{\fs(E)\res (\C^+\setminus\{\cc\}) \st \mbox{$E$ is an equivalence relation on a countable set}\}.
\]
Let also $\Sc$ denote the set of all possible local coarse shapes. Since
\[
\Sc\:\subseteq\:\{\alpha\in\Sf \st \mbox{$\alpha(m)\geq\alpha(n)$ whenever $m\leq n$}\},
\]
$\Sc$ is a countable subset of $\Sf$. We view $\Sc$ as a Polish space with the discrete topology. Below we will sometimes use self-explanatory notation such as $\langle n^k,\bar{m},l\rangle$ to denote the function $\alpha$ from $\C^+\setminus\{\cc\}$ to $\C\setminus\{\cc\}$ such that
\[
\alpha(i)=\left\{\begin{array}{ll} n & \mbox{if $1\leq i\leq k$;} \\ m & \mbox{if $k<i<\omega$;} \\ l & \mbox{if $i=\omega$.} \end{array}\right.
\]
Note that $\Sc\ne\{\alpha\in\Sf \st \alpha(m)\geq\alpha(n)\mbox{ whenever $m\leq n$}\}$ since, for instance, $\langle\bar{1},0\rangle\not\in\Sc$.

\begin{lemma} Let $E\subseteq F$ be countable Borel equivalence relations on the standard Borel space $X$. The functions
\[
\begin{array}{lll}
X & \rightarrow & \Sc \\
x & \mapsto & \lcs([x]_F)
\end{array}\quad\mbox{and}\quad
\begin{array}{lll}
X & \rightarrow & \Sf \\
x & \mapsto & \lfs([x]_F)
\end{array}
\]
are Borel. \label{lem:PBor} \end{lemma}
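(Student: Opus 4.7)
The plan is to reduce the question to the Borel-definability of counting $E$-classes of prescribed size inside a single $F$-class. By the Lusin-Novikov uniformization theorem, there exist sequences $(e_n)_{n\in\omega}$ and $(f_n)_{n\in\omega}$ of Borel functions $X\to X$ with $E=\bigcup_n\mathrm{graph}(e_n)$ and $F=\bigcup_n\mathrm{graph}(f_n)$, so that $[y]_E=\{e_n(y):n\in\omega\}$ and $[x]_F=\{f_n(x):n\in\omega\}$ for every $x,y\in X$. Using these enumerations, the condition ``$|[y]_E|\geq m$'' on $y$ is the Borel assertion $\exists n_1<\cdots<n_m$ with $e_{n_1}(y),\ldots,e_{n_m}(y)$ pairwise distinct, and ``$|[y]_E|=m$'' is ``$\geq m$'' minus ``$\geq m+1$''. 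Thus the sets $A_m:=\{y:|[y]_E|=m\}$ and $A_{\geq m}:=\{y:|[y]_E|\geq m\}$ are Borel for each $1\leq m\leq\omega$.

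Next, for each $1\leq m\leq\omega$ and $k\in\omega$, I claim that the set $\{x:\n_m(E\res [x]_F)\geq k\}$ is Borel. Indeed, this set consists of those $x$ for which there exist $n_1,\ldots,n_k\in\omega$ with $f_{n_i}(x)\in A_m$ for every $i$ and with $f_{n_1}(x),\ldots,f_{n_k}(x)$ pairwise $E$-inequivalent, a countable union of Borel conditions. Subtracting ``$\geq k+1$'' from ``$\geq k$'' yields ``$=k$'', and intersecting over all $k$ yields ``$=\aleph_0$'', so $\{x:\n_m(E\res [x]_F)=\kappa\}$ is Borel in $x$ for every $\kappa\in\omega\cup\{\aleph_0\}$. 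Replacing $A_m$ by $A_{\geq m}$ throughout, the analogous sets for $\n_{\geq m}(E\res [x]_F)$ are Borel as well.

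Finally, recall that $\Sf$ is viewed as a subspace of Baire space, where a basic open neighborhood is specified by fixing the values of finitely many coordinates, while $\Sc$ carries the discrete topology. The preimage under $x\mapsto\lfs([x]_F)$ of a basic open subset of $\Sf$ is then a finite intersection of sets of the form $\{x:\n_m(E\res [x]_F)=\kappa\}$, hence Borel. Borel measurability of $x\mapsto\lcs([x]_F)$ into the discrete space $\Sc$ reduces to showing that each fiber is Borel, and each fiber is a countable intersection of sets $\{x:\n_{\geq m}(E\res [x]_F)=\kappa\}$, again Borel.

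There is no serious obstacle here: the argument is essentially a direct application of Lusin-Novikov combined with the elementary observation that sizes of countable $E$-classes are Borel-definable. The only point requiring a little attention is correctly matching the Borel structures on the codomains $\Sf$ and $\Sc$ to the basic counting sets of the previous paragraph, so that Borel measurability reduces to what has already been verified.
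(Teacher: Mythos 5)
Your proof is correct and follows essentially the same route as the paper's: both arguments reduce the lemma to showing that the counting sets $\{x\in X : [x]_F \text{ has exactly } n \text{ $E$-classes of size (at least) } m\}$ are Borel, and then conclude by matching these sets against the basic open subsets of $\Sf$ and the points of the countable discrete space $\Sc$. The only divergence is in the middle step, where the paper invokes Feldman--Moore and passes to the quotients $X^{(E)}_m/(E\res X^{(E)}_m)$, whereas you count directly via Lusin--Novikov enumerating functions for $E$ and $F$; this is a cosmetic variation (Feldman--Moore is itself a consequence of Lusin--Novikov), and your version is arguably the more self-contained of the two since it avoids any appeal to quotient Borel structures.
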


\begin{proof} Fix $E\subseteq F$ as in the statement of the lemma. For each $1\leq m,n\leq\omega$, let
\[
\begin{array}{lll}
P_{n,m}      & = & \{x\in X \st [x]_F\mbox{ has exactly $n$ $E$-classes containing exactly $m$ elements}\}, \\
P_{n,\geq m} & = & \{x\in X \st [x]_F\mbox{ has exactly $n$ $E$-classes containing at least $m$ elements}\}.
\end{array}
\]
We claim that for each $m,n$, $P_{n,m}$ and $P_{n,\geq m}$ are Borel. To see this, fix $m,n$. Let $\pi:X\to X/E$ be the quotient map, and let $F=E^X_{\Gamma}$ where $\Gamma=\{\gamma_i\st i\in\omega\}$. Let $Y_m=X^{(E)}_m/\big(E\res X^{(E)}_m\big)$, and let $D_m$ be the equivalence relation $D_m=\big(F\res X^{(E)}_m)/(E\res X^{(E)}_m\big)$ on $Y_m$. Then
\[
x\in P_{n,m} \quad \Leftrightarrow \quad \exists i\,\pi(\gamma_i\cdot x) \ \in \ (Y_m)^{(D_m)}_n \ = \ \{ C\in Y_{m}\st |[C]_{D_{m}}|=n\}.
\]
Similarly, letting $Y_{\geq m}=X^{(E)}_{\geq m}\,/\big(E\res X^{(E)}_{\geq m}\big)$ and $D_{\geq m}=\big(F\res X^{(E)}_{\geq m}\big)/\big(E\res X^{(E)}_{\geq m}\big)$, we have
\[
x\in P_{n,\geq m} \quad \Leftrightarrow \quad \exists i\,\pi(\gamma_i\cdot x) \ \in \ (Y_{\geq m})^{(D_{\geq m})}_n \ = \ \{ C\in Y_{\geq m}\st |[C]_{D_{\geq m}}|=n\}.
\]
By \ref{fact:ctblPartition}, this shows that $P_{n,m}$ and $P_{n,\geq m}$ are Borel. Since $\Sc$ is countable discrete and for each $\alpha\in\Sc$ we have
\[
\lcs([x]_F)=\alpha\quad\Leftrightarrow\quad x\in\bigcap_{1\leq m\leq\omega}P_{\alpha(m),\geq m},
\]
it follows that $x\mapsto\lcs([x]_F)$ is Borel.

To see that $x\mapsto\lfs([x]_F)$ is Borel, for each function $s$ mapping a finite subset  of $\C^+\setminus\{\cc\}$ into $\C\setminus\{\cc\}$, let
\[
U_s = \{\alpha\in\Sf\st\alpha\res\dom(s)=s\}.
\]
The sets $U_s$ form a base for the topology on $\Sf$, so we must show that for each such $s$, the set $\{x\in X \st \lfs([x]_F)\in U_s\}$ is Borel in $X$. But clearly
\[
\lfs([x]_F)\in U_s\quad\Leftrightarrow\quad x\in\bigcap_{m\,\in\,\mbox{{\scriptsize dom}}(s)}P_{s(m),m}. \qedhere
\]
\end{proof}

\begin{remark} It follows from Lemma \ref{lem:PBor} that if $E\subseteq F$ are smooth countable equivalence relations on $X$, then the functions $[x]_F\mapsto\lcs([x]_F)$ and $[x]_F\mapsto\lfs([x]_F)$ from $X/F$ to $\Sc$ and $\Sf$, respectively, are Borel. \end{remark}

Now we are ready to define the ``global" notions of shape that will serve as combinatorial invariants for simultaneous Borel reducibility, embeddability, and isomorphism of pairs of countable Borel equivalence relations.

\begin{definition}\label{df:crs} Let $E\subseteq F$ be countable Borel equivalence relations on the standard Borel space $X$. For each $1\leq m\leq\omega$, let $\n_{\geq m}(E,F)$ be the number of $F$-classes that contain at least $m$ $E$-classes, and define the \emph{coarse (relative) shape} of $(E,F)$ to be the sequence
\[
\crs(E,F) \ := \ \langle \n_{\geq m}(E,F) \st 1\leq m\leq\omega\rangle.
\]
\end{definition}

The fact that each $\n_{\geq m}(E,F)$ belongs to $\C$ follows from Lemma \ref{lem:PBor}. Notice that when $E$ is smooth, $\crs(E,F)=\cs(F/E)$.

Recall that the set $\Sc$ of all local coarse shapes is a countable subset of $\C^{\C^+\setminus\{\cc\}}$ that is partially ordered by the relation $\alpha\leq\beta\;\Leftrightarrow\;(\forall m)\,\alpha(m)\leq\beta(m)$. An \emph{upper set} in $\Sc$ is any subset $W\subseteq\Sc$ such that for all $\alpha,\beta\in\Sc$, if $\alpha\in W$ and $\alpha\leq\beta$ then $\beta\in W$. Let $\uparrow\!\Sc$ denote the collection of all upper sets in $\Sc$, and for $\alpha\in\Sc$ write $\uparrow\!\alpha$ for the set of all $\beta\in\Sc$ such that $\alpha\leq\beta$.

\begin{definition}\label{df:gcs} Let $E\subseteq F$ be countable Borel equivalence relations on the standard Borel space $X$. For each $W\in\,\uparrow\!\Sc$, let $\n_W(E,F)$ be the number of $F$-classes $C$ for which $\lcs(C)\in W$. Define the \emph{global coarse (relative) shape} of $(E,F)$ to be the function
\[
\begin{array}{llllll}
\gcs(E,F)  & : & \uparrow\!\Sc & \rightarrow & \C            \\
           &   & W             & \mapsto     & \n_W(E,F).
\end{array}
\]
\end{definition}

Notice that $\n_W(E,F)$ belongs to $\C$ because the function $x\mapsto\lcs([x]_F)$ is Borel and each $W\subseteq\Sc$ is Borel. In Section \ref{sec:pairs1} we will see that $\uparrow\!\Sc$ is countable, which will greatly simplify our analysis of global coarse shape.

\begin{definition}\label{df:gfs} Let $E\subseteq F$ be countable Borel equivalence relations on the standard Borel space $X$. For each $\alpha\in\Sf$, let $\n_\alpha(E,F)$ be the number of $F$-classes $C$ for which $\alpha=\lfs(C)$. Define the \emph{global fine (relative) shape} of $(E,F)$ to be the function
\[
\begin{array}{llllll}
\gfs(E,F)  & : & \Sf     & \rightarrow & \C            \\
           &   & \alpha  & \mapsto     & \n_\alpha(E,F).
\end{array}
\]
\end{definition}

Notice that $\n_\alpha(E,F)$ belongs to $\C$ because $x\mapsto\lfs([x]_F)$ is Borel. We now show that the notions of shape introduced in Definitions \ref{df:crs}, \ref{df:gcs}, and \ref{df:gfs} provide necessary conditions for the existence of simultaneous Borel reductions, embeddings, and isomorphisms, respectively, between pairs $E\subseteq F$ of countable Borel equivalence relations.

\begin{proposition}\label{prop:shape2} Let $E\subseteq F$ and $E'\subseteq F'$ be countable Borel equivalence relations on the standard Borel spaces $X$ and $Y$, respectively.
\begin{enumerate}
 \item[(i)] If $(E,F)\leq_B (E',F')$, then $\crs(E,F)\leq \crs(E',F')$.
 \item[(ii)] If $f:X\to Y$ is a simultaneous Borel embedding from $(E,F)$ to $(E',F')$, then for every $F$-class $C$ we have $\lcs^{(E,F)}(C)\leq\lcs^{(E',F')}([f(C)]_{F'})$.
 \item[(iii)] If $(E,F)\sqsubseteq_B (E',F')$, then $\gcs(E,F)\leq \gcs(E',F')$.
 \item[(iv)] If $(E,F)\cong_B(E',F')$, then $\gfs(E,F)=\gfs(E',F')$.
\end{enumerate}
\end{proposition}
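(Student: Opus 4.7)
The common engine behind all four parts is that any simultaneous Borel reduction $f:X\to Y$ from $(E,F)$ to $(E',F')$ induces a well-defined map $\tilde{f}:X/F\to Y/F'$ given by $[x]_F\mapsto [f(x)]_{F'}$, and that for every $F$-class $C\subseteq X$, the restriction $f\res C$ is a (set-theoretic) reduction from $E\res C$ to $E'\res [f(C)]_{F'}$. Because $f$ is a reduction from $F$ to $F'$, the map $\tilde{f}$ is automatically injective. Because $f\res C$ is a reduction of $E\res C$ into $E'\res [f(C)]_{F'}$, it induces an injection of $C/E$ into $[f(C)]_{F'}/E'$. I will set these two observations up once and then invoke them throughout.

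For (i), the injection $C/E\hookrightarrow [f(C)]_{F'}/E'$ together with the injectivity of $\tilde f$ shows that if $C$ is an $F$-class with at least $m$ $E$-classes, then $\tilde f(C)$ is an $F'$-class with at least $m$ $E'$-classes, and distinct such $C$ are sent to distinct such $\tilde{f}(C)$; this gives $\n_{\geq m}(E,F)\leq \n_{\geq m}(E',F')$ for every $1\leq m\leq\omega$. For (ii), when $f$ is moreover injective, $f\res C$ is a bijection onto $f(C)$ that embeds $E\res C$ into $E'\res [f(C)]_{F'}$; since both relations are countable (so have no classes of size $\mathfrak{c}$), Proposition \ref{prop:shape}(ii) applied inside $[f(C)]_{F'}$ gives $\lcs^{(E,F)}(C)\leq \lcs^{(E',F')}([f(C)]_{F'})$ directly.

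For (iii), combine (ii) with the upper-set property: if $\lcs(C)\in W$ and $\lcs(C)\leq\lcs(\tilde{f}(C))$, then $\lcs(\tilde{f}(C))\in W$ as well, so $\tilde{f}$ restricts to an injection of $\{C\in X/F:\lcs(C)\in W\}$ into $\{C'\in Y/F':\lcs(C')\in W\}$, yielding $\n_W(E,F)\leq \n_W(E',F')$. For (iv), if $f$ is a Borel bijection then $\tilde{f}$ is a bijection $X/F\to Y/F'$ and each $f\res C$ is a bijection onto $[f(C)]_{F'}$, hence an isomorphism from $E\res C$ to $E'\res [f(C)]_{F'}$; by Proposition \ref{prop:shape}(iii) (again with no uncountable classes in play), $\lfs^{(E,F)}(C)=\lfs^{(E',F')}(\tilde{f}(C))$, so $\tilde{f}$ restricts to a bijection between the fibers of $\lfs^{(E,F)}$ and $\lfs^{(E',F')}$ over each $\alpha\in\Sf$, giving $\gfs(E,F)=\gfs(E',F')$. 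I do not expect a genuine obstacle here; the only thing to watch is to verify the obvious claim that a reduction of countable equivalence relations descends to an injection on quotients, and to cite Proposition \ref{prop:shape} for the local comparisons rather than re-prove them.
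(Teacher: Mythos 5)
Your proposal is correct and takes essentially the same approach as the paper: the induced injection $\tilde f$ on $F$-classes together with a local counting comparison inside each class. The only cosmetic difference is in (ii), where the paper obtains the local inequality by applying its own part (i) to the pair $(\Delta(C),\,E\res C)$, whereas you invoke Proposition \ref{prop:shape}(ii) for singletons; these are the same trivial counting argument.
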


\begin{proof} (i) Suppose $f:X\to Y$ is a simultaneous Borel reduction from $(E,F)$ to $(E',F')$ with $\tilde{f}:X/F\to Y/F'$ the induced embedding of classes, and let $1\leq m\leq \omega$ be arbitrary. For each $F$-class $C\subseteq X$ containing at least $m$ $E$-classes, $\tilde{f}(C)$ must contain at least $m$ $E'$-classes since $f$ reduces $E$ to $E'$. Hence $\tilde{f}$ injectively maps the collection of $F$-classes containing at least $m$ $E$-classes into the collection of $F'$-classes containing at least $m$ $E'$-classes. Thus $\n_{\geq m}(E,F)\leq\n_{\geq m}(E',F')$, and since $m$ was arbitrary we conclude that $\crs(E,F)\leq \crs(E',F')$.

(ii) Fixing $C$, the conclusion follows immediately from (i) if we view $f\res C$ as a simultaneous Borel reduction from $(\Delta(C),\,E\res C)$ to $(\Delta([f(C)]_{F'}),\,E'\res [f(C)]_{F'})$.

(iii) Let $f:X\to Y$ be a simultaneous Borel embedding from $(E,F)$ to $(E',F')$ with induced embedding $\tilde{f}:X/F\to Y/F'$. Let $W$ be an arbitrary upper set in $\Sc$, and suppose that $C\subseteq X$ is an $F$-class such that $\lcs(C)\in W$. Since $\lcs^{(E,F)}(C)\leq\lcs^{(E',F')}(\tilde{f}(C))$ by (ii), we have $\lcs^{(E',F')}(\tilde{f}(C))\in W$. Since $\tilde{f}$ is injective, this shows that $\n_W(E,F)\leq\n_W(E',F')$. As $W$ was arbitrary, the result follows.

(iv) Suppose $f$ is a simultaneous Borel isomorphism of $(E,F)$ with $(E',F')$. Then for each $F$-class $C$ in $X$, we have $\lfs(C)=\lfs(f(C))$. Thus for each $\alpha\in\Sf$, $f$ induces a bijection between those $F$-classes $C\subseteq X$ for which $\lfs(C)=\alpha$ and those $F'$-classes $C'\subseteq Y$ for which $\lfs(C')=\alpha$. In particular, for each $\alpha\in\Sf$ we have $\n_\alpha(E,F)=\n_\alpha(E',F')$, and hence $\gfs(E,F)=\gfs(E',F')$, as desired. \end{proof}

Note that if $(E,F)\sqsubseteq_B(E',F')$, then of course also $E\sqsubseteq_BE'$, $F\sqsubseteq_BF'$, and $(E,F)\leq_B(E',F')$, so by Propositions \ref{prop:shape} and \ref{prop:shape2} we have
\begin{equation*}
\cs(E)\leq \cs(E'), \quad \cs(F)\leq \cs(F'),\quad\mbox{and}\quad \crs(E,F)\leq \crs(E',F').
\end{equation*}
In Section \ref{sec:pairs1} we will show that if $E\subseteq F$ and $E'\subseteq F'$ are smooth countable equivalence relations, then $\gcs(E,F)\leq\gcs(E',F')$ implies $(E,F)\sqsubseteq_B (E',F')$. Now we show that the three inequalities above follow easily from the assumption that $\gcs(E,F)\leq\gcs(E',F')$ even if these equivalence relations are not smooth.

\begin{proposition}\label{prop:gscTechnical} Let $E\subseteq F$ and $E'\subseteq F'$ be countable Borel equivalence relations on the standard Borel spaces $X$ and $Y$, respectively. If $\gcs(E,F)\leq \gcs(E',F')$ then $\cs(E)\leq \cs(E')$, $\cs(F)\leq \cs(F')$, and $\crs(E,F)\leq \crs(E',F')$. \end{proposition}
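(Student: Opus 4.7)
The plan is to handle each of the three conclusions uniformly by choosing, for a fixed $m$ with $1 \leq m \leq \omega$, an appropriate upper set $W \in \;\uparrow\!\Sc$ whose count $\n_W(E,F)$ recovers the quantity to be bounded, and then invoking the hypothesis $\n_W(E,F) \leq \n_W(E',F')$ applied to that $W$. Two of the three cases will drop out immediately from this recipe; the third will require a short cardinal tail-sum argument.

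For $\crs(E,F) \leq \crs(E',F')$: an $F$-class $C$ contains at least $m$ $E$-classes precisely when $\lcs(C)(1) \geq m$, since $\lcs(C)(1) = \n_{\geq 1}(E\!\res\! C)$ is simply the total number of $E$-classes in $C$. So $W_m := \{\alpha \in \Sc \st \alpha(1) \geq m\}$ is an upper set with $\n_{W_m}(E,F) = \n_{\geq m}(E,F)$ and likewise for $(E',F')$, giving the desired termwise inequality. For $\cs(F) \leq \cs(F')$: a tail-sum computed inside each $F$-class gives $|C| = \sum_{k \geq 1} \lcs(C)(k)$ (the contribution of a single $E$-class of size $j$ to this sum is exactly $j$), so $|C| \geq m$ iff $\sum_k \lcs(C)(k) \geq m$. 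Then $V_m := \{\alpha \in \Sc \st \sum_k \alpha(k) \geq m\}$ is an upper set by monotonicity of summation, and $\n_{V_m}(E,F) = \n_{\geq m}(F)$, which again delivers the conclusion directly.

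The slightly more involved case will be $\cs(E) \leq \cs(E')$. For each $k \geq 1$ I will take $U_k := \{\alpha \in \Sc \st \alpha(m) \geq k\}$, an upper set, so that $\n_{U_k}(E,F)$ counts the $F$-classes containing at least $k$ $E$-classes of size $\geq m$. A tail-sum identity for cardinals should then give
\[
\n_{\geq m}(E) \;=\; \sum_{C \in X/F} \lcs(C)(m) \;=\; \sum_{k \geq 1} \n_{U_k}(E,F),
\]
and the analogous identity for $(E',F')$; the termwise inequality supplied by the hypothesis then passes to the sums. The one step that needs genuine checking---and really the only technical obstacle---is the tail-sum identity $\sum_i a_i = \sum_{k \geq 1} |\{i \st a_i \geq k\}|$ applied to $a_C := \lcs(C)(m)$. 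This rests on the pointwise identity $a = |\{k \geq 1 \st a \geq k\}|$ together with a Fubini-style exchange of summations; the pointwise identity is valid for every $a \in \omega \cup \{\aleph_0\}$ but fails for $a = \cc$. Fortunately $\lcs(C)(m) \leq |C|$, which is countable because $F$ is a countable Borel equivalence relation, so $\lcs(C)(m)$ always lies in $\omega \cup \{\aleph_0\}$ and the identity applies without issue.
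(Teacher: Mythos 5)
Your proposal is correct and follows essentially the same route as the paper: for each of the three inequalities one selects the appropriate upper sets (your $W_m$, $V_m$, and $U_k$ coincide with the paper's $\uparrow\!\langle m,\bar{0},0\rangle$, its set of shapes forcing $|C|\geq m$, and its $\uparrow\!\langle k^m,\bar{0},0\rangle$ / $\uparrow\!\langle\bar{k},k\rangle$, respectively) and applies the hypothesis termwise, with the $\cs(E)$ case handled by the same tail-sum identity $\n_{\geq m}(E)=\sum_{k\geq 1}\n_{U_k}(E,F)$ that the paper asserts. Your extra care in verifying the pointwise identity $a=|\{k\geq 1\st a\geq k\}|$ only for $a\in\omega\cup\{\aleph_0\}$ (using that $F$ is countable) is a valid and welcome elaboration of a step the paper leaves implicit.
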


\begin{proof} To see that $\crs(E,F)\leq \crs(E',F')$, notice that for each $1\leq m\leq\omega$ we have
\[
\begin{array}{lllllll}
\n_{\geq m}(E,F) & = & \n_{\uparrow\langle m,\bar{0},0\rangle}(E,F) & \leq & \n_{\uparrow\langle m,\bar{0},0\rangle}(E',F')
& = & \n_{\geq m}(E',F').
\end{array}
\]

To see that $\cs(F)\leq \cs(F')$, for each $1\leq m\leq\omega$ let $W_m$ be the set of all $\alpha\in\Sc$ such that any class having local coarse shape $\alpha$ contains at least $m$ elements. Then each $W_m$ is an upper set, so for each $1\leq m\leq\omega$ we have
\[
\begin{array}{lllllll}
\n_{\geq m}(F) & = & \n_{W_m}(E,F) & \leq & \n_{W_m}(E',F') & = & \n_{\geq m}(F').
\end{array}
\]

Finally, to see that $\cs(E)\leq \cs(E')$, fix $1\leq m\leq\omega$, let $1\leq k<\omega$, and let $W_k=\,\uparrow\!\langle k^m,\bar{0},0\rangle$ if $m<\omega$ and $W_k=\,\uparrow\!\langle \bar{k},k\rangle$ if $m=\omega$. Notice that for any $F$-class $C$, $\lcs(C)\in W_k$ iff $C$ has at least $k$ $E$-classes containing at least $m$ elements. Therefore $\n_{W_k}(E,F)$ is the number of $F$-classes that have at least $k$ $E$-classes containing at least $m$ elements. It follows that
\[
\begin{array}{lllllll}
\n_{\geq m}(E) & = & \displaystyle{\sum_{1\leq k<\omega}\n_{W_k}(E,F)} & \leq &
\displaystyle{\sum_{1\leq k<\omega}\n_{W_k}(E',F')} & = & \n_{\geq m}(E').
\end{array}\qedhere
\]
\end{proof}

Before proceeding to consider the converses of (i), (iii), and (iv) of Proposition \ref{prop:shape2} in the next two sections, we explicitly state an easy technical lemma that will be useful later.

\begin{lemma} Let $E\subseteq F$ and $E'\subseteq F'$ be smooth countable equivalence relations on the standard Borel spaces $X$ and $Y$, respectively, and suppose $\phi:X\to Y$ is a Borel reduction from $F$ to $F'$.
\begin{enumerate}
 \item[(i)] If $\lcs([x]_F)\leq\lcs([f(x)]_{F'})$ for all $x\in X$, then there is a simultaneous Borel embedding $\psi:X\to Y$ from $(E,F)$ to $(E',F')$ such that for all $x\in X$, $\phi(x)\mathrel{F'}\psi(x)$;
 \item[(ii)] If $\lfs([x]_F)=\lfs([f(x)]_{F'})$ for all $x\in X$, then there is a simultaneous Borel isomorphism $\psi:X\to Y\res [\ran(\phi)]_{F'}$ from $(E,F)$ to $(E',F')\res [\ran(\phi)]_{F'}$ such that for all $x\in X$, $\phi(x)\mathrel{F'}\psi(x)$.
\end{enumerate} \label{lem:adjust}
\end{lemma}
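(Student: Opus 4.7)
The plan is to \emph{adjust} $\phi$ within each $F'$-class by matching up canonical Borel enumerations of the $E$-classes inside $[x]_F$ with the $E'$-classes inside $[\phi(x)]_{F'}$, and then matching elements within corresponding subclasses. The local-shape hypotheses will guarantee that the required matchings exist, and the smoothness of all four equivalence relations will guarantee that they can be made uniformly Borel.

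First I would apply Fact \ref{fact:ctblSmooth}(5) to $E$, $E'$, $F$, $F'$, and to the smooth countable quotients $F/E$ on $X/E$ and $F'/E'$ on $Y/E'$, to obtain in each case a uniform Borel well-ordering of every equivalence class in order type $\leq\omega$. This provides, for each $F$-class $C$, a Borel-in-$C$ enumeration of its $E$-classes and, within each such $E$-class, a Borel enumeration of its elements; analogously on the $Y$-side. I would then re-sort the $E$-classes within each $F$-class $C$ into \emph{slots}, slot $i$ being the $i$-th $E$-class when one sorts first by decreasing cardinality (infinite classes first) and breaks ties by the Borel enumeration already fixed. Lemma \ref{lem:PBor} ensures that $\lfs([x]_F)$ is Borel in $x$, so the slot sizes are Borel, and hence the slot of $[x]_E$ within $[x]_F$ is a Borel function of $x$; do the same on the $Y$-side.

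The key combinatorial point is that if $\alpha,\beta\in\Sc$ and $\alpha\leq\beta$, then the $i$-th slot of $\alpha$ has size at most the $i$-th slot of $\beta$: indeed, the $i$-th slot of $\alpha$ has size $\geq s$ iff $\alpha(s)\geq i+1$, and this condition is preserved under $\alpha\leq\beta$. Similarly, if $\lfs(C)=\lfs([\phi(C)]_{F'})$ then slot sizes agree exactly. Now define $\psi(x)$ to be the $j$-th element of slot $i$ of $[\phi(x)]_{F'}$, where $i$ is the slot of $[x]_E$ in $[x]_F$ and $j$ is the position of $x$ within $[x]_E$. In case (i) the hypothesis makes $\psi$ well-defined and slot-wise injective; in case (ii) it makes $\psi$ slot-wise bijective with matching slot counts. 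In both cases $\psi$ is Borel by construction, lies in the same $F'$-class as $\phi(x)$, and reduces $E$ to $E'$ by design (and $F$ to $F'$ because $\phi$ does). Injectivity in (i) and bijectivity onto $[\ran(\phi)]_{F'}$ in (ii) follow respectively from the injective/bijective character of the slot-wise matching, combined with the fact that $\phi$ being a reduction of $F$ to $F'$ sends distinct $F$-classes into disjoint $F'$-classes; note that $[\ran(\phi)]_{F'}$ is Borel by Lusin--Souslin applied to the induced Borel injection $\tilde\phi:X/F\to Y/F'$.

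I expect the main technical hurdle to be the bookkeeping required to confirm that the slot indices, the slot-wise enumerations, and $\psi$ itself are all Borel in $x$; this reduces to repeated application of Fact \ref{fact:ctblSmooth} and Lemma \ref{lem:PBor} and is in essence an elaboration of the Case 2a argument in the proof of Proposition \ref{prop:smoothemb}.
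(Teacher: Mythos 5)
Your overall strategy --- adjusting $\phi$ within each $F'$-class by matching canonical Borel enumerations of the $E$-classes of $[x]_F$ against the $E'$-classes of $[\phi(x)]_{F'}$ --- is exactly the paper's, and the Borelness bookkeeping you defer is indeed routine. The problem is the specific matching rule: the ``sort by decreasing cardinality'' that defines your slots need not exist. A countable multiset of cardinals admits a non-increasing enumeration in order type $\leq\omega$ only when each value occurring in it is exceeded by at most finitely many members of the multiset. So if some $F$-class $C$ contains, say, one singleton $E$-class together with infinitely many two-element $E$-classes, or one finite $E$-class together with infinitely many infinite ones, then the smaller class receives no slot index in $\omega$ and $\psi$ is simply undefined on it. Put differently, the sequence $\sigma_i=\sup\{s\st\alpha(s)\geq i+1\}$ implicit in your ``key combinatorial point'' is a perfectly good non-increasing sequence, but its multiset of values can differ from the multiset of actual $E$-class cardinalities (in the singleton-plus-doubletons example it is constantly $2$ and misses the singleton entirely), so it does not enumerate the classes and the slot-to-slot map does not yield the required injection.

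The repair is what the paper does: fix uniform Borel well-orderings of the $F$- and $F'$-classes in order type $\leq\omega$ and define $\psi$ by a \emph{greedy} induction along each $F$-class. When you reach the first element $x_n$ of a not-yet-matched $E$-class, send it to the least-indexed element of an unused $E'$-class of $[\phi(x_n)]_{F'}$ of \emph{least} cardinality $\m\geq|[x_n]_E|$; send later elements of that $E$-class to successive unused elements of the chosen $E'$-class. The hypothesis $\lcs(C)\leq\lcs(D)$ (resp.\ $\lfs(C)=\lfs(D)$) guarantees the greedy choice always succeeds, and the whole construction is Borel because every choice is ``least-indexed'' with respect to the fixed Borel orderings. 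For part (ii) your idea can also be salvaged without the greedy step: since $\lfs(C)=\lfs(D)$, for each $m\in\{1,\ldots,\omega\}$ the $E$-classes of $C$ of size exactly $m$ and the $E'$-classes of $D$ of size exactly $m$ form equinumerous countable families, so you can match them size-by-size using the fixed enumerations; no global sort is needed. The remainder of your argument (injectivity across $F$-classes, Borelness of $[\ran(\phi)]_{F'}$ via the induced injection on quotients) is fine.
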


\begin{proof} We argue as in the proof of Case 1 of Proposition \ref{prop:smoothemb}. Using \ref{fact:ctblSmooth}, well-order in a uniform Borel manner each $F$-class in order type finite or $\omega$, and do likewise with each $F'$-class. Let $C=\{x_n\st n<|C|\}$ be a particular $F$-class, indexed according to our ordering, and likewise write $D=[\phi(C)]_{F'}=\{y_n\st n<|D|\}$. Working on $C$, inductively define $\psi(x_n)$ as follows. If there is no $m<n$ such that $x_m\mathrel{E}x_n$, let $\m\geq |[x_n]_E|$ be least for which there exists an $E'$-class in $D$ of size $\m$ disjoint from $\{\psi(x_i)\st i<n\}$, and define $\psi(x_n)$ to be the least-indexed element in $D$ that belongs to such an $E'$-class; otherwise, if $m<n$ is such that $x_m\mathrel{E}x_n$, let $f(x_n)$ be the least-indexed element in $[\psi(x_m)]_{E'}\setminus \{\psi(x_i)\st i<n\}$. The hypotheses for (i) and (ii) guarantee that such choices always exist. The function $\psi$ thus defined is Borel since the well-orderings of the $F$-classes and the $F'$-classes were Borel. Finally, it is easy to check that $\psi$ satisfies the conclusions of (i) and (ii) provides that the corresponding hypotheses hold. \end{proof}

\section{Classifying smooth countable pairs up to $\sim_B$ and $\approx_B$}\label{sec:pairs1}

In this section we establish the converses of parts (i) and (iii) of Proposition \ref{prop:shape2} for pairs $E\subseteq F$ of smooth countable equivalence relations. The fact that coarse relative shape is a complete invariant for $\sim_B$ will follow easily from Proposition \ref{prop:smoothemb}, and most of our effort will be spent on $\approx_B$, which involves some interesting combinatorics.

\begin{theorem}\label{thm:SmoothPairsRedSuf} Let $E\subseteq F$ and $E'\subseteq F'$ be smooth countable equivalence relations on the standard Borel spaces $X$ and $Y$, respectively. Then $(E,F)\leq_B(E',F')$ if and only if $\crs(E,F)\leq \crs(E',F')$. In particular, $(E,F)\sim_B (E',F')$ if and only if $\crs(E,F)=\crs(E',F')$. \end{theorem}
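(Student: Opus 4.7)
The forward direction is Proposition \ref{prop:shape2}(i), so all the work is in the converse. The plan is to reduce the problem to the single-equivalence-relation setting by passing to the quotient spaces $X/E$ and $Y/E'$, where Proposition \ref{prop:smoothemb} already provides the tool we need.

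Since $E$ and $E'$ are smooth countable, Fact \ref{fact:ctblSmooth} ensures that $X/E$ and $Y/E'$ with their quotient Borel structures are standard Borel spaces. The induced equivalence relations $F/E$ on $X/E$ and $F'/E'$ on $Y/E'$ are themselves smooth: any countable Borel separating family $(B_n)$ for $F$ consists of $F$-invariant (and hence $E$-invariant) Borel sets, and the sets $B_n/E$ then separate $F/E$ on $X/E$. Moreover the $F/E$-class of $[x]_E$ has cardinality equal to the number of $E$-classes contained in $[x]_F$, so
\[
\cs(F/E) = \crs(E,F) \quad \mbox{and} \quad \cs(F'/E') = \crs(E',F').
\]
Thus the hypothesis $\crs(E,F) \leq \crs(E',F')$ translates exactly into $\cs(F/E) \leq \cs(F'/E')$.

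Applying Proposition \ref{prop:smoothemb} to this inequality produces a Borel embedding $\tilde{g}: X/E \to Y/E'$ that reduces $F/E$ to $F'/E'$. I would then lift $\tilde{g}$ to a Borel function $g : X \to Y$ satisfying $g(x) \in \tilde{g}([x]_E)$ for every $x \in X$. Such a lifting exists by the characterization of Borel maps on quotients recalled in the introduction; equivalently, one can apply Lusin--Novikov to uniformize the Borel set $\{(x,y) \in X \times Y : ([x]_E,[y]_{E'}) \in \mbox{graph}(\tilde{g})\}$, whose $x$-sections are $E'$-classes and therefore countable.

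It then remains to check that $g$ is a simultaneous Borel reduction. For $F$ and $F'$, the chain
\[
x \mathrel{F} x' \ \Leftrightarrow \ [x]_E \mathrel{F/E} [x']_E \ \Leftrightarrow \ \tilde{g}([x]_E) \mathrel{F'/E'} \tilde{g}([x']_E) \ \Leftrightarrow \ g(x) \mathrel{F'} g(x')
\]
is immediate from the fact that $\tilde{g}$ reduces $F/E$ to $F'/E'$. For $E$ and $E'$: if $x \mathrel{E} x'$ then $g(x), g(x')$ both lie in the single $E'$-class $\tilde{g}([x]_E)$, so $g(x) \mathrel{E'} g(x')$; conversely, if $g(x) \mathrel{E'} g(x')$ then $\tilde{g}([x]_E) = \tilde{g}([x']_E)$, and \emph{injectivity} of $\tilde{g}$ forces $[x]_E = [x']_E$. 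This is the one place where it genuinely matters that $\tilde{g}$ is an embedding rather than merely a reduction between quotients: a non-injective $\tilde{g}$ could collapse distinct $E$-classes into a single $E'$-class and destroy the reduction of $E$ to $E'$, so one must take care not to invoke only the weaker Proposition \ref{prop:smoothred} at that step. The ``in particular'' statement follows by applying the main equivalence in both directions. Beyond this minor pitfall I expect no real obstacle, since Fact \ref{fact:ctblSmooth} and Proposition \ref{prop:smoothemb} carry essentially all of the combinatorial content.
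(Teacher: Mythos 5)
Your proposal is correct and follows essentially the same route as the paper: pass to the quotients $X/E$ and $Y/E'$, note that $\crs(E,F)=\cs(F/E)$, apply Proposition \ref{prop:smoothemb} to get a Borel \emph{embedding} of $F/E$ into $F'/E'$, and lift it back to $X$. The only cosmetic difference is that the paper performs the lifting by composing with a Borel transversal for $E'$ rather than by uniformizing the graph set, and your observation that injectivity of $\tilde{g}$ is what saves the reduction of $E$ to $E'$ is exactly the point the paper uses implicitly.
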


\begin{proof} The forward direction holds for any pairs of countable Borel equivalence relations by Proposition \ref{prop:shape2}. For the converse, consider the smooth equivalence relations $F/E$ and $F'/E'$ on the standard Borel spaces $X/E$ and $Y/E'$, respectively. Since
\[
\crs(E,F)=\cs(F/E)\quad\mbox{and}\quad \crs(E',F')=\cs(F'/E'),
\]
by Proposition \ref{prop:smoothemb} there is a Borel embedding $\tilde{f}:X/E\rightarrow Y/E'$ from $F/E$ into $F'/E'$. Letting $T\subseteq Y$ be a Borel transversal for $E'$, define $f:X\to Y$ by
\[
f(x) \ = \ \mbox{the unique element of $T$ that belongs to $\tilde{f}([x]_E)$}.
\]
Then $f$ is a simultaneous Borel reduction from $(E,F)$ to $(E',F')$. \end{proof}

The remainder of this section is devoted to showing that global coarse shape is a complete invariant for the simultaneous biembeddability relation on pairs of smooth countable equivalence relations. We begin by proving that $(\Sc,\leq)$ is a well partial order, i.e., that $(\Sc,\leq)$ is well-founded and has no infinite antichains, which will imply that $\uparrow\!\Sc$ is countable. Recall that $\Sc$ is the countable set of all function $\alpha:\{1,\ldots,\omega\}\to\{0,\ldots,\omega\}$ such that $\alpha\ne\bar{0}$, $\alpha(m)\geq\alpha(n)$ whenever $m\leq n$, and $\alpha(\omega)=\lim\alpha(n)$, ordered by $\alpha\leq\beta\Leftrightarrow (\forall m)\,\alpha(m)\leq\beta(m)$.

\begin{lemma} $(\Sc,\leq)$ is well-founded. \label{lem:WF} \end{lemma}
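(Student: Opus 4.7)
I will argue by contradiction, supposing that $\alpha_0 > \alpha_1 > \alpha_2 > \cdots$ is an infinite strictly descending chain in $\Sc$, and derive a contradiction by peeling structure off the $\alpha_i$ through three successive tail reductions, after which only a finite integer-valued monovariant remains. The point is that every $\alpha \in \Sc$, being non-increasing on $\{1,2,\ldots\}$ with $\alpha(\omega) = \lim_n \alpha(n)$, decomposes into an initial (possibly empty) block of $\omega$'s, then a (possibly empty) finite strictly-descending tail of positive integers, settling at the value $\alpha(\omega)$.

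First I would track the sequence $k_i := \alpha_i(\omega)$. Pointwise domination makes $(k_i)$ non-increasing in $\omega \cup \{\omega\}$, so it eventually stabilizes at some $k$. If $k = \omega$, then each $\alpha_i$ with $\alpha_i(\omega) = \omega$ must be identically $\omega$ on $\{1,2,\ldots\}$ (any finite value $\alpha_i(n_0) < \omega$ would bound the non-increasing tail below $\omega$, contradicting $\lim_n \alpha_i(n) = \omega$), so the $\alpha_i$ are eventually $\bar{\omega}$, contradicting strict decrease. Hence $k < \omega$, and passing to a tail I may assume $\alpha_i(\omega) = k$ for all $i$. Next set $N_i := \max\{n : \alpha_i(n) = \omega\}$ (interpreted as $0$ if $\alpha_i(1) < \omega$) and $M_i := \min\{n : \alpha_i(n) = k\}$; both are finite, the former because $\alpha_i(\omega) < \omega$ and the latter because the non-increasing $\alpha_i$ achieves its limit at some finite stage. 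Because $\alpha_i \geq \alpha_{i+1}$ pointwise, the inclusions $\{n : \alpha_{i+1}(n) = \omega\} \subseteq \{n : \alpha_i(n) = \omega\}$ and $\{n : \alpha_{i+1}(n) > k\} \subseteq \{n : \alpha_i(n) > k\}$ yield $N_i \geq N_{i+1}$ and $M_i \geq M_{i+1}$, so a further tail reduction lets me assume $N_i = N$ and $M_i = M$ for all $i$.

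After these reductions every $\alpha_i$ agrees with the constant $\omega$ on $\{1,\ldots,N\}$ and with the constant $k$ on $\{M,M+1,\ldots\} \cup \{\omega\}$, so any disagreement between $\alpha_i$ and $\alpha_{i+1}$ is confined to the finite middle index set $I := \{N+1,\ldots,M-1\}$, on which each $\alpha_i$ takes values in $\{k+1,k+2,\ldots\}$. The quantity
\[
S_i := \sum_{n \in I} \alpha_i(n)
\]
is then a non-negative integer (equal to $0$ when $I = \emptyset$, in which case all $\alpha_i$ coincide and strict decrease fails immediately), and $\alpha_i > \alpha_{i+1}$ combined with pointwise agreement off $I$ forces $S_i > S_{i+1}$, which is impossible. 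The step I expect to require the most care is the bookkeeping for the two successive stabilizations at $N$ and $M$, which have to be executed while preserving strict descent; once the normalization is in place, the monovariant on the finite set $I$ closes the proof at once.
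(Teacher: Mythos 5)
Your proof is correct, and it takes a genuinely different route from the paper's. The paper's argument is a two-line localization: given a strictly decreasing chain $\langle\alpha_k\rangle$, it lets $m(k)$ be the least coordinate at which $\alpha_k(m)>\alpha_{k+1}(m)$ (necessarily finite, since two elements of $\Sc$ agreeing at every finite coordinate agree at $\omega$ as well), passes to a subsequence along which $\langle m(k)\rangle$ is weakly increasing, and then reads off $\langle\alpha_{k+1}(m(k))\rangle$ as an infinite strictly decreasing sequence of natural numbers. You instead normalize the chain globally: stabilize the limit value $\alpha_i(\omega)$ (disposing of the $\bar{\omega}$ case), then the length of the initial $\omega$-block and the onset of the constant tail, and finally use the sum over the remaining finite window as a strictly decreasing integer monovariant. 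Both proofs bottom out in the well-foundedness of $\omega$; yours is longer but more self-contained and extracts a normal form for elements of $\Sc$ (essentially the $\langle \omega^N,\ldots,\bar{k},k\rangle$ picture, close in spirit to the $\langle n^k,\bar{m},l\rangle$ notation the paper introduces), while the paper's version buys brevity at the cost of a subsequence extraction whose re-indexing of the $m(k)$'s has to be handled with some care. All the individual steps of your reduction check out: the monotonicity of $k_i$, $N_i$, $M_i$ follows from pointwise domination exactly as you say, each stabilizes because $\omega+1$ and $\omega$ are well-ordered, and on the residual finite window the values are finite integers, so the sum argument closes the proof.
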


\begin{proof} Suppose $\langle\alpha_k\st k\in\omega\rangle$ is a strictly decreasing sequence in $\Sc$. For each $k\in\omega$, let $m(k)$ be the least $m$ such that $\alpha_k(m)>\alpha_{k+1}(m)$. Passing to a subsequence of $\langle m(k)\st k\in\omega\rangle$ if necessary, we may assume that $\langle m(k)\st k\in\omega\rangle$ is (weakly) increasing. But then $\langle\alpha_{k+1}(m(k))\st k\in\omega\rangle$ is an infinite strictly decreasing sequence of natural numbers. \end{proof}

\begin{lemma} $(\Sc,\leq)$ has no infinite antichains. \label{lem:antichain} \end{lemma}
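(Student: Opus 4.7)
The plan is to argue by contradiction. Suppose $\{\alpha_k : k \in \omega\}$ were an infinite antichain in $\Sc$. The function constantly equal to $\omega$ dominates every element of $\Sc$, so at most one $\alpha_k$ could equal it; discarding it if necessary, I would assume $c_k := \alpha_k(\omega) < \omega$ for every $k$. Since each $\alpha_k$ is weakly decreasing with respect to the well-order $\omega+1$, the set $\{m : \alpha_k(m) = \omega\}$ is a finite initial segment of $\{1, 2, \dots\}$; let $n_k$ denote its size.

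Next, since both $(n_k)_k$ and $(c_k)_k$ take values in $\omega$, a standard monotone-subsequence argument lets me pass to a subsequence along which each of these two integer sequences is either constant or strictly increasing. I would then show that in every case except ``$(n_k)$ and $(c_k)$ both constant'' one has $\alpha_0 \leq \alpha_\ell$ for all sufficiently large $\ell$, contradicting the antichain hypothesis. The key observation is that $\alpha_0$ is a weakly decreasing function on $\{n_0 + 1, n_0 + 2, \dots\}$ bounded by the natural number $M_0 := \alpha_0(n_0 + 1)$ and stabilizing at $c_0$ past some finite index $N_0$. Writing $\tilde\alpha_\ell(j) := \alpha_\ell(n_\ell + j) - c_\ell$ for the shifted tail, one checks the inequality separately on the ranges $m \leq n_0$ (both sides equal $\omega$), $n_0 < m \leq n_\ell$ (left side finite, right side $\omega$), and $m > n_\ell$, where $\alpha_0(m) \leq \alpha_\ell(m) = c_\ell + \tilde\alpha_\ell(m - n_\ell)$ holds for large $\ell$ either because $c_\ell \geq M_0$ (available when $c_\ell \to \infty$, noting that $\alpha_\ell(m) \geq c_\ell$) or because $m - n_0 > n_\ell - n_0 \geq N_0$ forces $\alpha_0(m) = c_0 \leq c_\ell$ (available when $n_\ell \to \infty$).

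This leaves the case in which both $(n_k) \equiv n$ and $(c_k) \equiv c$ are constant, which I expect to be the main obstacle. Here $\alpha_k$ is entirely determined by the shifted tail $\tilde\alpha_k(m) := \alpha_k(n + m) - c$, which is a weakly decreasing sequence of natural numbers that stabilizes at $0$ -- that is, a partition -- and $\alpha_i \leq \alpha_j$ holds if and only if $\tilde\alpha_i \leq \tilde\alpha_j$ pointwise. So the task reduces to showing that partitions (viewed as eventually-zero weakly decreasing $\omega$-valued sequences) form a well-quasi-order under pointwise domination. For this I would apply Higman's lemma: finite words over the well-order $\omega$ form a WQO under the word-embedding relation, and if $\tilde\alpha_i$ word-embeds into $\tilde\alpha_j$ via a strictly increasing map $f$ with $\tilde\alpha_i(k) \leq \tilde\alpha_j(f(k))$, then $f(k) \geq k$ together with the fact that $\tilde\alpha_j$ is weakly decreasing yields $\tilde\alpha_j(f(k)) \leq \tilde\alpha_j(k)$, hence $\tilde\alpha_i \leq \tilde\alpha_j$ pointwise. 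Thus any pointwise antichain of partitions is also a word-embedding antichain, which by Higman must be finite -- closing the final case and the argument.
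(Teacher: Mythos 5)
Your proof is correct, and its combinatorial core takes a genuinely different route from the paper's. The paper likewise begins by discarding the top element $\langle\bar{\omega},\omega\rangle$ and splitting on whether the limit values $\alpha_k(\omega)$ are bounded: when they are unbounded it argues inductively that every $\alpha_k$ would have to be constantly $\omega$, and when they are constant on a subsequence it runs a bare-hands bad-sequence construction --- nested infinite sets $A_j$, indices $k_j$ chosen to minimize the stabilization point $m(k)$, and positions $\ell_j$ at which infinitely many later terms drop --- terminating in an infinite strictly decreasing sequence of natural numbers. You instead normalize further (tracking both the length $n_k$ of the initial $\omega$-segment and the limit value $c_k$), dispose of every case in which one of these parameters is unbounded by exhibiting $\alpha_0\leq\alpha_\ell$ directly for large $\ell$, and reduce the remaining case to the assertion that eventually-zero weakly decreasing integer sequences form a well quasi-order under pointwise domination, which you deduce from Higman's lemma via the observation that a word embedding between weakly decreasing words already forces pointwise domination (using $f(k)\geq k$ and monotonicity; the positions beyond the support of $\tilde\alpha_i$ are trivial since $\tilde\alpha_i$ vanishes there). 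The trade-off is clear: the paper's argument is self-contained and cites nothing, essentially reproving the relevant special case of Higman by hand, whereas yours has a slightly longer but entirely routine preliminary case analysis and isolates the real content of the lemma --- that containment of Young diagrams is a WQO --- by delegating it to a classical theorem. Both are complete and correct.
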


\begin{proof} Suppose for contradiction that $\{\alpha_k\st k\in\omega\}\subseteq\Sc$ is pairwise $\leq$-incomparable. At most one $\alpha_k$ can be of the form $\langle\bar{\omega},n\rangle$, so without loss of generality none of them have this form. This means that for every $k$ there is $n(k)\in\omega$ such that for all but finitely many $i\in\omega$, $\alpha_k(i)=\alpha_k(\omega)=n(k)$.

Now, suppose first that the mapping $k\mapsto n(k)$ has unbounded range. For arbitrary $k$, if $\alpha_k(0)$ were finite then fixing $j$ such that $\alpha_k(0)\leq n(j)$ we would have $\alpha_k(i)\leq\alpha_j(i)$ for all $i$, so $\alpha_k(0)$ must be $\omega$. But this is true for all $k$, so by continuing this argument inductively we see that $\alpha_k(i)=\omega$ for all $k$ and $i$, a contradiction.

Hence we may assume that there is $N\in\omega$ such that $n(k)=N$ for infinitely many $k$. Fix such $N$, and by passing to a subsequence suppose $n(k)=N$ for all $k$. For each $k$, let $m(k)\in\omega$ be least such that $\alpha_k(i)=N$ for all $i\geq m(k)$. Now we will inductively define a decreasing sequence of infinite subsets $A_j\subseteq\omega$ along with integers $k_j$ and $\ell_j$ as follows. For the induction basis, let $A_0=\omega$, let $k_0\in A_0$ be such that $m(k_0)$ is least in $\{m(k)\st k\in A_0\}$, and let $\ell_0<m(k_0)$ be such that $(\exists^\infty k\in A_0)\,\alpha_k(\ell_0)<\alpha_{k_0}(\ell_0)$. There must exist such $\ell_0$ since for every $k\ne k_0$ there is some $\ell$ for which $\alpha_k(\ell)<\alpha_{k_0}(\ell)$, while for all $i\geq m(k_0)$, $\alpha_{k_0}(i)\leq\alpha_k(i)$. Then supposing $A_j$, $k_j$, and $\ell_j$ have been defined, let
\[
\begin{array}{l}
A_{j+1} \ = \ \{k\in A_j\st \alpha_k(\ell_j)<\alpha_{k_j}(\ell_j)\}; \\
k_{j+1} \ \in \ A_{j+1} \mbox{ is such that $m(k_{j+1})$ is least in $\{m(k)\st k\in A_{j+1}\}$}; \\
\ell_{j+1} \ < \ m(k_{j+1}) \mbox{ is such that $(\exists^\infty k\in A_{j+1})\,\alpha_k(\ell_{j+1})<\alpha_{k_{j+1}}(\ell_{j+1})$}.
\end{array}
\]
Now let $\langle \ell_{j(i)}\st i\in\omega\rangle$ be a subsequence of $\langle \ell_j\st j\in\omega\rangle$ that is (weakly) increasing. Then for all $i\in\omega$ we have
\[
\alpha_{k_{j(i)}}(\ell_{j(i)}) \ > \ \alpha_{k_{j(i+1)}}(\ell_{j(i)}) \ \geq \ \alpha_{k_{j(i+1)}}(\ell_{j(i+1)}),
\]
so
\[
\langle\alpha_{k_{j(i)}}(\ell_{j(i)})\st i\in\omega\rangle
\]
is an infinite strictly decreasing sequence of natural numbers, a contradiction. \end{proof}

Together Lemmas \ref{lem:WF} and \ref{lem:antichain} imply that any upper set in $\Sc$ has the form
\[
\{\beta\in\Sc\st \alpha_0\leq\beta\vee\cdots\vee\alpha_k\leq\beta\}
\]
for some $\alpha_0,\ldots,\alpha_k\in\Sc$. In particular, since $\Sc$ is countable it follows that $\uparrow\!\Sc$ is also countable. Writing $\bar{\alpha}=(\alpha_0,\ldots,\alpha_k)$, we will use the notation
\[
\uparrow\!(\bar{\alpha})=\{\beta\in\Sc\st \alpha_0\leq\beta\vee\cdots\vee\alpha_k\leq\beta\}
\]
for the upper set determined by $\bar{\alpha}$. We view $\uparrow\!\Sc$ as a Polish space with the discrete topology, and we think of $\gcs(E,F)$ as the ``sequence" of values $\n_{\uparrow(\bar{\alpha})}(E,F)$ as $\uparrow\!(\bar{\alpha})$ varies over $\uparrow\!\Sc$.

\begin{theorem} Let $E\subseteq F$ and $E'\subseteq F'$ be smooth countable equivalence relations on the standard Borel spaces $X$ and $Y$, respectively. Then $(E,F)\sqsubseteq_B(E',F')$ if and only if $\gcs(E,F)\leq\gcs(E',F')$. In particular, $(E,F)\approx_B(E',F')$ if and only if $\gcs(E,F)=\gcs(E',F')$. \label{thm:SmoothPairsEmbSuf} \end{theorem}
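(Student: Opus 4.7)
The forward direction is Proposition \ref{prop:shape2}(iii), so I focus on the converse: assuming $\gcs(E,F)\leq\gcs(E',F')$, I plan to build a simultaneous Borel embedding by reducing to a combinatorial matching on quotients.

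First, by Lemma \ref{lem:adjust}(i), it suffices to produce a Borel reduction $\phi:X\to Y$ from $F$ to $F'$ satisfying $\lcs^{(E,F)}([x]_F)\leq\lcs^{(E',F')}([\phi(x)]_{F'})$ for every $x\in X$. Since $F$ and $F'$ are smooth countable, $X/F$ and $Y/F'$ are standard Borel and $F'$ is selective (Fact \ref{fact:ctblSmooth}); fixing a Borel transversal for $F'$, it further reduces to constructing an injective Borel $\tilde\phi:X/F\to Y/F'$ such that $\lcs(c)\leq\lcs(\tilde\phi(c))$ for every $c\in X/F$. The Borel partitions $X/F=\bigsqcup_{\alpha\in\Sc}X_\alpha$ and $Y/F'=\bigsqcup_{\alpha\in\Sc}Y_\alpha$ by local coarse shape are well-defined by Lemma \ref{lem:PBor} and the remark following it, and the hypothesis translates to the Hall-type inequality
\[
\sum_{\alpha\in W}|X_\alpha| \ \leq \ \sum_{\alpha\in W}|Y_\alpha| \qquad \text{for every upper set } W\in\;\uparrow\!\Sc.
\]

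My next step is to produce ``matching cardinals'' $n_{\alpha,\beta}\in\C$ for each $\alpha\leq\beta$ in $\Sc$ satisfying $\sum_{\beta\geq\alpha}n_{\alpha,\beta}=|X_\alpha|$ and $\sum_{\alpha\leq\beta}n_{\alpha,\beta}\leq|Y_\beta|$. I plan to construct these inductively using the wpo structure of $(\Sc,\leq)$: by Lemmas \ref{lem:WF} and \ref{lem:antichain}, $\Sc$ admits a well-ordering $\preceq$ extending $\leq$ (of countable order type, since $\Sc$ is countable). Processing $\alpha$ in $\preceq$-order, at each stage I distribute the demand $|X_\alpha|$ into the remaining capacity of $\bigsqcup_{\beta\geq\alpha}Y_\beta$, preferentially using up the $\beta$'s that are minimal in $\uparrow\!\alpha$. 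The main obstacle will be verifying that this greedy distribution preserves the residual Hall inequality on the yet-to-be-processed elements, which is what guarantees each stage has room; the absence of infinite antichains keeps the set of minimal elements of each upper set finite, so the bookkeeping at each step is finitely generated.

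Once the $n_{\alpha,\beta}$ are in hand, I realize the matching Borel-measurably: partition each $X_\alpha$ into disjoint Borel pieces $X_{\alpha,\beta}$ of cardinality $n_{\alpha,\beta}$, and reserve disjoint Borel subsets $Y_{\alpha,\beta}\subseteq Y_\beta$ of the same cardinality (any standard Borel space of cardinality $\m\in\C$ can be Borel-partitioned into pieces of any prescribed cardinalities in $\C$ summing to $\m$). The isomorphism theorem \ref{fact:kuratowski} supplies Borel bijections $X_{\alpha,\beta}\to Y_{\alpha,\beta}$, and the disjoint union of these bijections is the desired $\tilde\phi$. Lifting via the fixed Borel transversal for $F'$ produces the reduction $\phi$, to which Lemma \ref{lem:adjust}(i) applies to yield a simultaneous Borel embedding, establishing $(E,F)\sqsubseteq_B(E',F')$. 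The equivalence $(E,F)\approx_B(E',F')\Leftrightarrow\gcs(E,F)=\gcs(E',F')$ then follows by applying the main implication in both directions.
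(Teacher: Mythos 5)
Your reduction of the problem to finding an injective Borel map $\tilde\phi:X/F\to Y/F'$ with $\lcs(c)\leq\lcs(\tilde\phi(c))$ for all $c$, followed by an application of Lemma \ref{lem:adjust}(i), is exactly right and matches the skeleton of the paper's argument. The gap is in the combinatorial core, which you explicitly defer: the existence of the matching cardinals $n_{\alpha,\beta}$ is precisely the hard content of the theorem, and the greedy scheme you sketch does not establish it. Even in finite instances the rule ``process in a linear extension of $\leq$ and prefer the minimal elements of $\uparrow\!\alpha$'' is underdetermined in a way that matters: take $\alpha_1,\alpha_2$ incomparable, $\beta_1$ above both, $\beta_2$ above $\alpha_1$ only, with $|X_{\alpha_1}|=|X_{\alpha_2}|=1$, $|Y_{\alpha_1}|=|Y_{\alpha_2}|=0$, $|Y_{\beta_1}|=|Y_{\beta_2}|=1$; if $\alpha_1$ is processed first and its demand is sent to $Y_{\beta_1}$, then $\alpha_2$ is stranded, so some foresight (i.e., Hall's theorem itself, or an augmenting-path argument) is already required to break ties. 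More seriously, in the infinite case the upper-set inequality alone does \emph{not} imply feasibility of a cardinal transportation problem on a general countable bipartite graph: one source of demand $\aleph_0$ joined to infinitely many sinks of capacity $1$, each of which must also absorb a private demand of $1$, satisfies the Hall-type condition but admits no feasible assignment. It is only the well-partial-order structure of $\Sc$ that excludes such configurations, and your proposal invokes Lemmas \ref{lem:WF} and \ref{lem:antichain} only to obtain a linear extension and the finite generation of upper sets, which is not where their force is actually needed.

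The paper resolves exactly this obstacle by splitting $\Sc$ into $S_0=\{\alpha\in\Sc\st\n_{\uparrow\alpha}(E',F')<\omega\}$ and its complement. On the $S_0$ side only finitely many $F$-classes are in play, so the finite marriage theorem applies verbatim; on the complementary side the shapes are embedded one at a time while maintaining the invariant that the global coarse shape of the still-unused portion of $Y$ is \emph{unchanged}, and the delicate subcase (infinitely many available $F'$-classes spread over infinitely many distinct shapes, each occurring only finitely often) is handled by extracting an increasing sequence $\gamma_0<\gamma_1<\cdots$ in $\uparrow\!\alpha_k$ via the well-partial-order property and consuming only the even-indexed terms, thereby reserving capacity for all future demands. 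Some argument of this reservation type, rather than a greedy distribution, is what your proof still needs before the final Borel realization step (which is otherwise fine) can be carried out.
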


Our proof of Theorem \ref{thm:SmoothPairsEmbSuf} will make use of Hall's marriage theorem, which we state here in the following form. Let $\mathcal{A}=\{A_i\st i\in I\}$ be an indexed family of finite sets. A \emph{system of distinct representatives} for $\mathcal{A}$ is an injective function $s:I\to\bigcup_{i\in I}A_i$ such that for all $i\in I$, $s(i)\in A_i$. The \emph{marriage condition} for $\mathcal{A}$ is the statement that for all subsets $J\subseteq I$, $|J|\leq|\bigcup_{i\in J}A_i|$. Then Hall's theorem states that $\mathcal{A}$ admits a system of distinct representatives if and only if $\mathcal{A}$ satisfies the marriage condition.

\begin{proof}[Proof of Theorem \ref{thm:SmoothPairsEmbSuf}] The forward direction follows from Proposition \ref{prop:shape2}(iii). As a preliminary step in proving the converse, we will split the spaces $X$ and $Y$ into two invariant Borel sets, and work on each separately. Thus we let
\[
S_0 \ = \ \{\alpha\in\Sc\st \n_{\uparrow\alpha}(E',F')<\omega\}
\]
and define
\[
X_0 \ = \ \{x\in X\st \lcs([x]_F)\in S_0\} \quad \mbox{and} \quad Y_0 \ = \ \{y\in Y\st \lcs([y]_{F'})\in S_0\}.
\]
We also let
\[
S_1=\Sc\setminus S_0,\quad X_1=X\setminus X_0,\quad\mbox{and}\quad Y_1=Y\setminus Y_0.
\]
Notice that $S_0$ is an upper subset of $\Sc$, and hence we may fix $\bar{\alpha}=(\alpha_0,\ldots,\alpha_k)$ such that $S_0=\;\uparrow\!\!(\bar{\alpha})$. Then each $\alpha_i$ belongs to $S_0$, and $S_0=\bigcup_{i\leq k}\!\uparrow\!\alpha_i$. It follows that there are at most finitely many $F'$-classes with local coarse shape in $S_0$, and hence by our assumption that $\gcs(E,F)\leq\gcs(E',F')$, there are at most finitely many $F$-classes in $X_0$. In particular, $X_0$ is a countable $F$-invariant subset of $X$. We will work separately on $X_0$ and on $X_1$, obtaining simultaneous Borel embeddings $f_0:X_0\to Y_0$ from $(E,F)\res X_0$ to $(E',F')\res Y_0$ and $f_1:X_1\to Y_1$ from $(E,F)\res X_1$ to $(E',F')\res Y_1$. For $i\in\{0,1\}$, let us write
\[
E_i=E\res X_i,\quad F_i=F\res X_i,\quad E_i'=E'\res Y_i,\quad F_i'=F'\res Y_i.
\]
The key fact that will allow us to work separately on the two pieces is the observation that
\[
\gcs(E_0,F_0)\quad\leq\quad\gcs(E_0',F_0')
\]
and
\begin{equation}
\gcs(E_1,F_1)\quad\leq\quad\gcs(E_1',F_1').
\tag{$\ast$}
\end{equation}
To see that the first inequality is true, note that for any upper set $W$ in $\Sc$, the set $W\cap S_0$ is again an upper set, and
\[
\begin{array}{llllllll}
     & \n_W(E_0,F_0)         & = & \n_{W\cap S_0}(E_0,F_0)   & = & \n_{W\cap S_0}(E,F) & & \\
& & \leq & \n_{W\cap S_0}(E',F') & = & \n_{W\cap S_0}(E_0',F_0') & = & \n_W(E_0',F_0').
\end{array}
\]
For the second inequality, if $W$ is any upper set contained in $S_0$ then
\[
\n_W(E_1,F_1) \ = \ \n_W(E_1',F_1') \ = \ 0,
\]
and if $W$ is any upper set in $\Sc$ such that $W\cap S_1\ne\emptyset$, then $\n_W(E',F')$ is infinite, which implies that
\[
\begin{array}{lllllll}
\n_W(E_1,F_1) & \leq & \n_W(E,F) & \leq & \n_W(E',F') & = & \n_W(E_1',F_1').
\end{array}
\]

Now that we have broken the problem into two pieces, first we consider defining a simultaneous Borel embedding $f_0:X_0\to Y_0$ from $(E_0,F_0)$ to $(E_0',F_0')$. We have that $X_0/F_0$ and $Y_0/F_0'$ are finite, so $X_0$ and $Y_0$ are countable. Hence any map defined on $X_0$ will be Borel, so the problem is purely one of combinatorics. In fact, the problem is exactly the one addressed by Hall's marriage theorem. Using the notation described above, we have $I=X_0/F_0$ and for each $i=[x]_{F_0}\in I$,
\[
A_i=\left\{\;[y]_{F_0'}\in Y_0/F_0'\::\: \lcs^{(E_0,F_0)}([x]_{F_0})\,\leq\,\lcs^{(E_0',F_0')}([y]_{F_0'})\;\right\}.
\]
Now let $J\subseteq I$ be arbitrary, and let $J'$ be the upward closure of $J$ in $X_0/F_0$, i.e.,
\[
J' \ = \ \{\,[x]_F\in X_0/F_0\st (\exists C\in J)\,\lcs(C)\leq\lcs([x]_F)\,\}\quad\subseteq\quad X_0/F_0.
\]
Using now the fact that $\gcs(E_0,F_0)\leq\gcs(E_0',F_0')$, we have for each $J\subseteq I$ that
\[
|J|\quad\leq\quad |J'|\quad\leq\quad \left|\bigcup_{i\in J}A_i\right|.
\]
But this is exactly the marriage condition for $\mathcal{A}=\{A_i\st i\in I\}$, so by Hall's theorem there exists a system of distinct representatives $s:I\to\bigcup_{i\in I}A_i$ for $\mathcal{A}$. Then $s$ is an injection from $X_0/F_0$ to $Y_0/F_0'$ such that for all $C\in X_0/F_0$, $\lcs(C)\leq\lcs(s(C))$. We conclude that $(E_0,F_0)\sqsubseteq_B(E_0',F_0')$ by Lemma \ref{lem:adjust}.

Next we turn to the construction of a simultaneous Borel embedding $f_1:X_1\to Y_1$ from $(E_1,F_1)$ to $(E_1',F_1')$. Recycling notation, for $\alpha\in S_1\subseteq\Sc$ let us temporarily write $X_\alpha$ for the set of $x\in X_1$ such that $\lcs([x]_F)=\alpha$, and let $S_1^*=\{\alpha\in S_1\st X_\alpha\ne\emptyset\}$. $S_1^*$ is countable, so let $\langle\alpha_k\st k<N\rangle$ enumerate $S_1^*$, where $N=|S_1^*|\in\omega\cup\{\omega\}$. We will define, for each $k<N$, a simultaneous Borel embedding $g_k:X_{\alpha_k}\to Y_1$ from $(E_1,F_1)\res X_{\alpha_k}$ to $(E_1',F_1')$. For notational convenience we put
\[
X(k) \, := \, \bigcup_{i\geq k} X_{\alpha_i} \quad \mbox{and} \quad Z(k) \, := \, Y_1\setminus\bigcup_{i<k}[\ran(g_i)]_{F'}.
\]
We will define the functions $g_k$ by induction on $k$ so that for each $k<N$, $\ran(g_k)\subseteq Z(k)$ and
\begin{equation}
\gcs\big(\,(E_1',F_1')\res Z(k+1)\,\big) \quad = \quad \gcs\big(E_1',F_1'\big).
\tag{$\ast\ast$}
\end{equation}
Supposing we have done this, $\bigcup_k g_k$ will then be a simultaneous Borel embedding from $(E_1,F_1)$ to $(E_1',F_1')$, as desired. Thus fix $k<N$ and suppose we have defined $g_i$ for each $i<k$ satisfying the stated conditions. In particular, note by ($\ast$) and ($\ast\ast$) that
\[
\gcs\big(\,(E_1,F_1)\res X(k)\,\big) \ \leq \ \gcs(E_1,F_1) \ \leq \ \gcs(E_1',F_1') \ = \ \gcs\big(\,(E_1',F_1')\res Z(k)\,\big).
\]
This implies that
\[
\n_{\uparrow\alpha_k}\big(\,(E_1',F_1')\res Z(k)\,\big) \ = \ \n_{\uparrow\alpha_k}(E_1',F_1'),
\]
and therefore since $\alpha_k\in S_1$, $\n_{\uparrow\alpha_k}\big((E',F')\res Z(k)\big)$ is infinite.

Now we consider two cases. First suppose $\n_{\uparrow\alpha_k}\big((E',F')\res Z(k)\big)=\cc$. Then since $\uparrow\!\alpha_k$ is countable, there must be $\beta\in\,\uparrow\!\alpha_k$ such that the number of $F'$-classes in $Z(k)$ having local coarse shape $\beta$ is $\cc$. Fix such $\beta$, and let $U$ be an $F'$-invariant Borel subset of $Z(k)$ such that every $F'$-class in $U$ has local coarse shape $\beta$ and both $U$ and $Z(k)\setminus U$ contain uncountably many $F'$-classes having local coarse shape $\beta$. Since $|U/F'|=\cc$, there is a Borel reduction $\phi:X_{\alpha_k}\to U$ from $F_1\res X_{\alpha_k}$ to $F_1'\res U$, and since $\alpha\leq\beta$ this reduction can be adjusted using Lemma \ref{lem:adjust} to obtain a Borel embedding $g_k$ from $(E_1,F_1)\res X_{\alpha_k}$ into $(E_1',F_1')\res U$, as desired. Finally, since every $F'$-class in $U\subseteq Z(k)\setminus Z(k+1)$ has local coarse shape $\beta$ but $Z(k)\setminus U\subseteq Z(k+1)$ contains uncountably many $F'$-classes with local coarse shape $\beta$, ($\ast\ast$) continues to hold after the construction of $g_k$.

For the second case, suppose $\n_{\uparrow\alpha_k}\big((E',F')\res Z(k)\big)=\aleph_0$. If there is any particular $\beta\in\,\uparrow\!\alpha_k$ for which there exist infinitely many $F'$-classes in $Z(k)$ having local coarse shape $\beta$, then as above we can let $U$ be an $F'$-invariant subset of $Z(k)$ such that every $F'$-class in $U$ has local coarse shape $\beta$ and both $U$ and $Z(k)\setminus U$ contain infinitely many $F'$-classes having local coarse shape $\beta$, and define $g_k$ using \ref{lem:adjust} so that it takes values in $U$. In this case $(\ast\ast)$ will continue to hold for the same reason as above, namely we will have $\gcs((E',F')\res Z(k))=\gcs((E',F')\res Z(k)\setminus U)$. So we may assume that for every $\beta\in\,\uparrow\!\alpha_k$ there are at most finitely many $F'$-classes in $Z(k)$ having local coarse shape $\beta$. Let $V$ be the set of all $\beta\in\,\uparrow\!\alpha_k$ for which there exists at least one $F'$-class in $Z(k)$ having local coarse shape $\beta$, so that $V\subseteq\Sc$ is infinite. Since $(V,\leq)$ is a well partial order, there exist $\beta_0,\ldots,\beta_m\in V$ such that $V\setminus\{\alpha_k\}=\,\uparrow\!(\beta_0,\ldots,\beta_m)$. By the pigeon-hole principle, there must be $i\in\{0,\ldots,m\}$ for which there are infinitely many $F'$-classes in $Z(k)$ having local coarse shape in $\uparrow\!\beta_i$. Continuing inductively we obtain an increasing sequence $\alpha_k=\gamma_0<\gamma_1<\gamma_2<\gamma_3<\cdots$ of local coarse shapes in $V$ such that for each $i$, there are infinitely many $F'$-classes in $Z(k)$ having local coarse shape in $\uparrow\!\gamma_i$. Now for each $i\in\omega$ let $C_i$ be an $F'$-class having local coarse shape $\gamma_{2i}$, and let $U=\cup_iC_i$. As in the previous paragraph, since $|X_{\alpha_k}/F|\leq |U/F'|$ and $\lcs(C)\leq\lcs(C')$ for every $F$-class $C\subseteq X_{\alpha_k}$ and $F'$-class $C'\subseteq U$, we may now define $g_k$ to embed $(E_1,F_1)\res X_{\alpha_k}$ into $(E_1',F_1')\res U$ using \ref{lem:adjust}. It only remains to check that $(\ast\ast)$ continues to hold. Let $W\in\,\uparrow\!\Sc$ be arbitrary. If there is an $F'$-class $C\subseteq Y_1$ such that $\lcs(C)\in W$ and $C\subseteq Z(k)\setminus Z(k+1)$, then by construction there is $i$ such that $\gamma_{2i}\in W$. But then for all $j\geq i$ there is an $F'$-class in $Z(k+1)$ with local coarse shape $\gamma_{2j+1}\in W$, so $\n_W\big((E_1',F_1')\res Z(k+1)\big)=\aleph_0$ as well. \end{proof}

\section{Classifying smooth countable pairs up to $\cong_B$}\label{sec:pairs2}

In this final section we determine the extent to which global fine shape is a complete invariant for simultaneous Borel isomorphism of pairs of smooth countable equivalence relations. We begin by identifying those functions that can arise as a global fine shape. It will be helpful to introduce some specialized notation. First, to any Borel set $B\subseteq X\times Y$ in a product of standard Borel spaces, we associate the ``section-counting" function $\hat{B}:Y\to\C$ defined by
\[
\hat{B}(y)\:=\:|B^y|.
\]
In particular, if $g:X\to Y$ is a Borel function then we think of $\hat{g}:Y\to\C$ as the fiber-counting function that associates to each $y\in Y$ the cardinality of $g^{-1}(y)$. Second, as in Section \ref{sec:singletons} we will continue to write $\Eq{g}$ for the smooth equivalence relation on $\dom(g)$ induced by the Borel function $g$.

\begin{lemma} Let $\phi:\Sf\to\C$ be an arbitrary function. Then $\phi$ is the global fine relative shape of some pair $E\subseteq F$ of smooth countable equivalence relations if and only if there is a standard Borel space $X$ and a Borel function $g:X\to\Sf$ such that $\phi=\hat{g}$. \label{lem:gs} \end{lemma}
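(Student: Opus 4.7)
The plan is to prove both directions by passing between the Borel function $g$ and an explicit pair $(E, F)$ of smooth countable equivalence relations realizing $\gfs(E, F) = \hat{g}$. Neither direction looks deep: the forward direction is essentially quotienting by $F$, while the backward direction will amount to building a standard Borel space that fiberwise realizes the prescribed local fine shapes.

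For the forward direction, I would start with a pair $(E, F)$ of smooth countable equivalence relations on a standard Borel space $Z$. Since $F$ is smooth and countable, Fact \ref{fact:ctblSmooth} gives that $F$ is selective, so the quotient $Z/F$ is standard Borel. By the remark following Lemma \ref{lem:PBor}, the induced map $\tilde{g} : Z/F \to \Sf$ given by $\tilde{g}([z]_F) = \lfs([z]_F)$ is Borel. Taking $X := Z/F$ and $g := \tilde{g}$, each fiber $g^{-1}(\alpha)$ is exactly the collection of $F$-classes of local fine shape $\alpha$, so $\hat{g}(\alpha) = \n_\alpha(E,F) = \phi(\alpha)$ for every $\alpha \in \Sf$.

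For the backward direction, given Borel $g : X \to \Sf$ with $\hat{g} = \phi$, I would construct a standard Borel space $Y$ carrying a pair $E \subseteq F$ of smooth countable Borel equivalence relations with $\gfs(E,F) = \phi$. Let $T$ be the countable set $\{1, 2, \ldots, \omega\} \times \omega \times \omega$. For each $\alpha \in \Sf$ put
\[
D_\alpha \ = \ \{(m, j, k) \in T \st j < \alpha(m) \text{ and } k < m\},
\]
and let $(m,j,k) \sim_\alpha (m',j',k')$ iff $m = m'$ and $j = j'$. By construction $D_\alpha$ contains exactly $\alpha(m)$ equivalence classes of size $m$ for each $m$, so its local fine shape is $\alpha$. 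Now set
\[
Y \ = \ \{(x, t) \in X \times T \st t \in D_{g(x)}\},
\]
and define $F$ on $Y$ by $(x, t) \mathrel{F} (x', t')$ iff $x = x'$, and $E$ by $(x, (m,j,k)) \mathrel{E} (x', (m',j',k'))$ iff $(x, m, j) = (x', m', j')$.

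Because $g$ is Borel and each set $\{\alpha \in \Sf \st \alpha(m) > j\}$ is clopen in $\Sf$, $Y$ is a Borel subset of $X \times T$ and hence standard Borel. Both $E$ and $F$ are then countable Borel equivalence relations, each smooth via the Borel reduction $(x, (m,j,k)) \mapsto (x, m, j)$ or $(x, t) \mapsto x$ respectively, and $E \subseteq F$ by construction. The $F$-class through $(x, t)$ is $\{x\} \times D_{g(x)}$, which has local fine shape $g(x)$, so $\gfs(E, F)(\alpha) = |g^{-1}(\alpha)| = \phi(\alpha)$. The only point requiring any care is the Borel measurability of the assignment $\alpha \mapsto D_\alpha$; this is the main (mild) obstacle, but it is handled immediately by the observation that membership in $D_\alpha$ depends only on finitely many coordinates of $\alpha$.
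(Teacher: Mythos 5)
Your proposal is correct and follows essentially the same route as the paper: the forward direction is the same quotient-by-$F$ argument using that the local fine shape map on $X/F$ is Borel, and your backward construction is a cosmetic reparametrization of the paper's $E_g(X)\subseteq F_g(X)$ (you index classes by their size $m\in\{1,\dots,\omega\}$ directly, whereas the paper codes size $\omega$ by $n=0$). All the verifications you flag (Borelness of $Y$ via the clopen sets $\{\alpha\st\alpha(m)>j\}$, and that each $F$-class $\{x\}\times D_{g(x)}$ has local fine shape $g(x)$) go through as stated.
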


\begin{proof} For the forward direction, suppose $\phi=\gfs(E,F)$, where $E\subseteq F$ are smooth countable equivalence relations on the standard Borel space $Y$. Let $X=Y/F$, and let $g([y]_F)=\lfs([y]_F)$. Then $\phi=\hat{g}$. For the converse, we will describe a general construction that canonically associates to any Borel function $g:X\to\Sf$ a pair of smooth equivalence relations $E_g(X)\subseteq F_g(X)$ such that $\hat{g}=\gfs(E_g(X),F_g(X))$.

Let $X$ be any fixed standard Borel space. Define the Borel set $U(X)\subseteq X\times\omega\times\omega\times\omega$ by
\[
(x,k,m,n)\:\in\:U(X)\quad\Leftrightarrow\quad n=0\:\vee\:k<n.
\]
Then on $U(X)$ define the equivalence relations $E_u(X)\subseteq F_u(X)$ by
\[
(x,k,m,n)\mathrel{F_u(X)}(x',k',m',n')\quad\Leftrightarrow\quad x=x'
\]
and
\[
\begin{array}{lll}
(x,k,m,n)\mathrel{E_u(X)}(x',k',m',n') & \Leftrightarrow & x=x'\:\wedge\: n=n'\:\wedge\:m=m'.
\end{array}
\]
If $X$ is clear from context or unimportant then we omit it from the notation and write simply $U$, $E_u$, and $F_u$.
The equivalence relations $E_u\subseteq F_u$ are smooth countable equivalence relations on $U$. Intuitively, $(x,k,m,n)$ is the $(k+1)$-th element of the $(m+1)$-th $E_u$-class of size $n$ inside $[(x,k,m,n)]_{F_u}$, where we interpret $n=0$ to mean of size $\omega$; in particular, each $F_u$-class has local fine shape $\langle\bar{\omega},\omega\rangle$.

Now, given the standard Borel space $X$ and any Borel function $g:X\to\Sf$, define the Borel set $B_g(X)\subseteq U(X)$ by
\[
\begin{array}{llllll}
(x,k,m,n)\in B_g(X)      & \Leftrightarrow &           & (\;n=0 & \wedge & m< g(x)(\omega)\;) \\
                         &                 & \mbox{or} & (\;k<n & \wedge & m< g(x)(n)\;).
                    \end{array}
\]
Then we define the equivalence relations
\[
E_g(X):=E_u(X)\res B_g(X)\quad\mbox{and}\quad F_g(X):=F_u(X)\res B_g(X),
\]
again omitting reference to $X$ when convenient. Then $E_g\subseteq F_g$ are smooth countable equivalence relations on $B_g$ such that for each element $(x,k,m,n)\in B_g$, the $F_g$-class of $(x,k,m,n)$ has local fine shape $g(x)$. From this one easily checks that $\gfs(E_g,F_g)=\hat{g}$. \end{proof}

It follows from Lemma \ref{lem:gs} that the global fine relative shape of a pair $E\subseteq F$ of smooth countable equivalence relations need not be a Borel function, though by \ref{fact:AnalyticRep} and \ref{fact:genunicity} it will be $\sigma(\bsig^1_1)$-measurable and in particular universally measurable, i.e., $\mu$-measurable for any $\sigma$-finite Borel measure $\mu$. We will use the construction of $E_g(X)\subseteq F_g(X)$ frequently throughout the remainder of this section, along with the facts that
\[
\hat{g}=\gfs(E_g,F_g)\quad\mbox{and}\quad\gfs(E,F)=\hat{\lfs}(E,F).
\]

Recall that for smooth countable equivalence relations $E\subseteq F$ on the standard Borel space $X$, the local fine relative shape function $\lfs^{(E,F)}:X/F\to\Sf$ associated to $(E,F)$ is Borel. In this section we often use the notation $\lfs(E,F)$ in place of $\lfs^{(E,F)}$.

\begin{lemma} Suppose that $E\subseteq F$ and $E'\subseteq F'$ are smooth countable equivalence relations. If $(E,F)\cong_B(E',F')$, then $\llbracket\lfs(E,F)\rrbracket\cong_B\llbracket\lfs(E',F')\rrbracket$.\label{lem:easy1} \end{lemma}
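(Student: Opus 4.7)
The plan is as follows. Let $f : X \to Y$ be a simultaneous Borel isomorphism from $(E,F)$ to $(E',F')$. Since $E\subseteq F$ and $E'\subseteq F'$ are smooth countable, by Fact \ref{fact:ctblSmooth} both $F$ and $F'$ are standard, so the quotients $X/F$ and $Y/F'$ are standard Borel spaces. Moreover, the canonical quotient maps $\pi_F : X \to X/F$ and $\pi_{F'} : Y \to Y/F'$ are Borel.

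Because $f$ reduces $F$ to $F'$, the composition $\pi_{F'}\circ f : X \to Y/F'$ is $F$-invariant and therefore descends to a Borel function $\tilde{f} : X/F \to Y/F'$. Since $f$ is bijective and reduces $F$ to $F'$, the map $\tilde{f}$ is a Borel bijection; by the Borel isomorphism theorem (or directly by applying the same argument to $f^{-1}$), its inverse is also Borel, so $\tilde{f}$ is a Borel isomorphism of standard Borel spaces.

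The key remaining point is that $\tilde{f}$ preserves the local fine shape. Let $C \in X/F$, and observe that $f\res C$ is a bijection from $C$ onto the $F'$-class $\tilde{f}(C)$, and by assumption it is a reduction (indeed an isomorphism) of $E\res C$ with $E'\res \tilde{f}(C)$. In particular, $f\res C$ induces a bijection on $E$-classes within $C$ and preserves their cardinalities, so
\[
\lfs(E,F)(C) \ = \ \lfs(E',F')(\tilde{f}(C)).
\]
Hence for all $C,D \in X/F$,
\[
\lfs(E,F)(C) = \lfs(E,F)(D) \ \Longleftrightarrow \ \lfs(E',F')(\tilde{f}(C)) = \lfs(E',F')(\tilde{f}(D)),
\]
which is exactly the statement that $\tilde{f}$ is an isomorphism from $\llbracket\lfs(E,F)\rrbracket$ to $\llbracket\lfs(E',F')\rrbracket$. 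Combined with the fact that $\tilde{f}$ is a Borel bijection of standard Borel spaces, this yields $\llbracket\lfs(E,F)\rrbracket \cong_B \llbracket\lfs(E',F')\rrbracket$. There is no substantive obstacle here; the only thing to verify carefully is that $\tilde{f}$ is Borel, which follows immediately from the standardness of the quotients guaranteed by smoothness.
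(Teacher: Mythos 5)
Your proof is correct and takes essentially the same approach as the paper, which simply observes in one line that the induced map $\tilde{f}([x]_F)=[f(x)]_{F'}$ on the quotients is the desired isomorphism; you have just filled in the routine details (standardness of the quotients, Borelness and bijectivity of $\tilde{f}$, and preservation of local fine shapes).
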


\begin{proof} Suppose $E\subseteq F$ are defined on $X$, and $E'\subseteq F'$ on $Y$. If $\phi:X\to Y$ is a simultaneous Borel isomorphism from $(E,F)$ to $(E',F')$, then the Borel function $\tilde{\phi}:X/F\to Y/F'$ defined by $\tilde{\phi}([x]_F)=[\phi(x)]_{F'}$ is an isomorphism from $\llbracket\lfs(E,F)\rrbracket$ to $\llbracket\lfs(E',F')\rrbracket$. \end{proof}

\begin{lemma} Let $g:X\to\Sf$ be any Borel function into $\Sf$. Then $\Eq{g}\mathrel{\cong_B}\Eq{\lfs(E_g,F_g)}$. In fact, there is a Borel isomorphism $\phi:X\to B_g/F_g$ such that for all $x\in X$, $g(x)=\lfs^{(E_g,F_g)}(\phi(x))$.\label{lem:easy2} \end{lemma}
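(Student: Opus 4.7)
The plan is to exhibit $\phi$ explicitly via a Borel selector for $F_g$, and then simply observe that the construction of $B_g$ was rigged so that the local fine shape of the $F_g$-class above $x$ is exactly $g(x)$. The first claim $\Eq{g}\cong_B\Eq{\lfs(E_g,F_g)}$ then follows immediately from the stronger second claim.

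First I would note that, by definition, two points of $B_g$ are $F_g$-equivalent if and only if they share their first coordinate, so the projection $\pi:B_g\to X$, $(x,k,m,n)\mapsto x$, factors through a Borel bijection $\overline{\pi}:B_g/F_g\to \pi(B_g)$. Since $\Sf$ excludes the zero function, every $g(x)$ has either $g(x)(\omega)>0$ or some $n\ge 1$ with $g(x)(n)>0$, and so for every $x\in X$ the fiber of $\pi$ above $x$ is nonempty; hence $\pi(B_g)=X$ and $\overline{\pi}$ is a Borel bijection onto $X$. In particular the quotient Borel structure on $B_g/F_g$ is standard.

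Next I would produce a Borel section $s:X\to B_g$ by splitting on whether $g(x)(\omega)>0$: set $s(x)=(x,0,0,0)$ on the Borel set $\{g(x)(\omega)>0\}$, and otherwise set $s(x)=(x,0,0,n(x))$, where $n(x)$ is the Borel function giving the least $n\ge 1$ with $g(x)(n)>0$. Borelness of $s$ is automatic from the Borelness of $g$ and of the projection $\alpha\mapsto\alpha(n)$ for each fixed $n\in\C^+\setminus\{\cc\}$. Then $\phi:X\to B_g/F_g$ defined by $\phi(x)=[s(x)]_{F_g}$ is a Borel bijection with inverse $\overline{\pi}$.

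Finally, by the way $B_g$ was carved out of $U$, the $F_g$-class $\phi(x)$ consists, for each $n\ge 1$, of exactly $g(x)(n)$ many $E_g$-classes of size $n$ (indexed by $m<g(x)(n)$) and, for $n=0$, of exactly $g(x)(\omega)$ many $E_g$-classes of size $\omega$ (indexed by $m<g(x)(\omega)$). Thus $\lfs^{(E_g,F_g)}(\phi(x))=g(x)$ for every $x\in X$, proving the second claim. Reading this as $g=\lfs(E_g,F_g)\circ\phi$ and noting that $\phi$ is a Borel bijection, we conclude $x\mathrel{\Eq{g}}y\Leftrightarrow\phi(x)\mathrel{\Eq{\lfs(E_g,F_g)}}\phi(y)$, giving the first claim. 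The only point requiring any care is the Borel selector $s$, and this is handled by the case split above; there is no substantive obstacle.
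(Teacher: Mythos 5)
Your proof is correct and takes essentially the same route as the paper: the paper simply declares $\phi(x)$ to be the unique $F_g$-class in $B_g$ whose elements have first coordinate $x$ and asserts it is the desired isomorphism, leaving implicit exactly the points you spell out (nonemptiness of the fiber over $x$ because elements of $\Sf$ are not identically zero, Borelness via a selector, and the computation that the class above $x$ has local fine shape $g(x)$). No changes needed.
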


\begin{proof} Recall that $\Eq{\lfs(E_g,F_g)}$ is an equivalence relation defined on $B_g(X)/F_g(X)$, where $B_g(X)\subseteq X\times\omega^3$. For each $x\in X$, let $\phi(x)$ be the unique $F_g$-class in $B_g$ whose elements have $x$ as their first component. Then $\phi$ is the desired isomorphism. \end{proof}

Using these observations together with Lemma \ref{lem:gs}, we can show that global fine shape is not a complete simultaneous isomorphism invariant, even amongst those smooth countable pairs with Borel global fine shape functions.

\begin{example} There exist smooth countable equivalence relations $E\subseteq F$ and $E'\subseteq F'$ such that $\gfs(E,F)=\gfs(E',F')$, but $(E,F)\not\cong_B(E',F')$. In fact, such pairs $(E,F)$ and $(E',F')$ can be found with $\gfs(E,F)$ constant. \label{ex:2} \end{example}

\begin{proof} Let $G$ and $H$ be smooth equivalence relations each of whose uncountably many equivalence classes is uncountable, and such that $G$ is \selective{} and $H$ is standard but not \selective{}, so in particular $G\not\cong_BH$. Then $G$ and $H$ admit surjective Borel reductions to $\Delta(\Sf)$, say $g$ and $h$ respectively. Note that $\hat{g}(\alpha)=\hat{h}(\alpha)=\cc$ for all $\alpha\in\Sf$. By Lemma \ref{lem:easy2}, we have
\[
\llbracket \lfs(E_{g},F_{g})\rrbracket \ \mathrel{\cong_B} \ G\quad\mbox{and}\quad\llbracket \lfs(E_{h},F_{h})\rrbracket \ \mathrel{\cong_B} \ H.
\]
Therefore
\[
\llbracket \lfs(E_{g},F_{g})\rrbracket \ \mathrel{\not\cong_B} \ \llbracket \lfs(E_{h},F_{h})\rrbracket,
\]
which by Lemma \ref{lem:easy1} implies that
\[
(E_{g},F_{g}) \ \mathrel{\not\cong_B} \ (E_{h},F_{h}),
\]
even though
\[\gfs(E_{g},F_{g})\quad=\quad\hat{g}\quad=\quad\hat{h}\quad=\quad\gfs(E_{h},F_{h}).\qedhere\]
\end{proof}

\begin{remark}\label{rem:MoreBS} Example \ref{ex:2} shows that one way to find smooth countable pairs $E\subseteq F$ and $E'\subseteq F'$ such that $\gfs(E,F)=\gfs(E',F')$ but $(E,F)\not\cong_B(E',F')$ is to find Borel functions $g,g'$ into $\Sf$ such that $\hat{g}=\hat{g}'$ but $\Eq{g}\not\cong_B\llbracket g'\rrbracket$. Indeed, we can produce families of pairwise non-isomorphic smooth countable pairs from families of pairwise non-isomorphic smooth singletons as follows. Suppose $\{D_i\st i\in I\}$ is a family of smooth equivalence relations such that:
\begin{enumerate}
 \item each $D_i$ has uncountably many equivalence classes, all of them uncountable;
 \item each $D_i$ is standard;
 \item for $i\ne j\in I$, $D_i\not\cong_BD_j$.
\end{enumerate}
For each $i\in I$, let $g_i$ be a surjective Borel reduction from $D_i$ to $\Delta(\Sf)$. Then
\[
\{(E_{g_i},F_{g_i})\st i\in I\}
 \]
is a pairwise non-isomorphic family of pairs of smooth countable equivalence relations all having the same (constant) global fine shape. \end{remark}


On the other hand, a smooth countable pair $E\subseteq F$ having global fine shape function constantly $\cc$ is rather unnatural. We turn our attention next to identifying a large natural class of smooth countable pairs on which global fine shape \emph{is} a complete isomorphism invariant. It turns out that again the notion of Borel parametrization plays a central role.

\begin{theorem} Let $E\subseteq F$ and $E'\subseteq F'$ be smooth countable equivalence relations on the standard Borel spaces $X$ and $Y$, respectively, such that $\llbracket \lfs(E,F)\rrbracket$ and $\llbracket \lfs(E',F')\rrbracket$ are Borel parametrized. Then $(E,F)\cong_B(E',F')$ if and only if $\gfs(E,F)=\gfs(E',F')$.\label{thm:SmoothPairsIsomBP} \end{theorem}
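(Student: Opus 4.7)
Plan: The forward direction is Proposition~\ref{prop:shape2}(iv). For the converse, the strategy is to (i) produce a Borel bijection $\sigma:X/F\to Y/F'$ preserving local fine shape --- i.e.\ $\lfs(E,F)=\lfs(E',F')\circ\sigma$ --- and then (ii) lift $\sigma$ to a simultaneous Borel isomorphism $\phi:X\to Y$ via Lemma~\ref{lem:adjust}(ii). Since $F$ and $F'$ are smooth countable, hence \selective{} by Fact~\ref{fact:ctblSmooth}, the quotients $X/F$ and $Y/F'$ are standard Borel, so everything takes place in the standard Borel category.

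For step (i), write $g=\lfs(E,F)$ and $g'=\lfs(E',F')$, so that the hypothesis $\gfs(E,F)=\gfs(E',F')$ reads $\hat g=\hat{g'}$, and the hypothesis that $\Eq{g}$ and $\Eq{g'}$ are Borel parametrized supplies Borel bijections $\phi_g:X/F\to\bigsqcup_{\m}Z_\m\times Y_\m$ and $\phi_{g'}:Y/F'\to\bigsqcup_{\m}Z'_\m\times Y_\m$. For each $\m$, the parametrization identifies $Z_\m$ with the set of $\Eq{g}$-classes of size $\m$ (equivalently, fibers of $g$ of cardinality $\m$), yielding a Borel injection $h_\m:Z_\m\to\Sf$ whose image is $\{\alpha:\hat g(\alpha)=\m\}$. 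Defining $h'_\m$ analogously, its image is $\{\alpha:\hat{g'}(\alpha)=\m\}$, and the equality $\hat g=\hat{g'}$ makes these images coincide. Lusin--Suslin realises each $h_\m$ (resp.\ $h'_\m$) as a Borel isomorphism onto its Borel image, so $\psi_\m:=(h'_\m)^{-1}\circ h_\m:Z_\m\to Z'_\m$ is a Borel bijection; setting $\sigma:=\phi_{g'}^{-1}\circ\bigsqcup_\m(\psi_\m\times\mbox{id}_{Y_\m})\circ\phi_g$ gives the desired Borel bijection, with $g=g'\circ\sigma$ by construction. This step is the Borel parametrization analogue of the argument of Proposition~\ref{prop:KMM}, adjusted because $g$ and $g'$ have different domains.

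For step (ii), fix a Borel $F'$-transversal $T'\subseteq Y$ (which exists by Fact~\ref{fact:ctblSmooth}), and define $\phi_0:X\to Y$ by letting $\phi_0(x)$ be the unique element of $T'\cap\sigma([x]_F)$. Then $\phi_0$ is Borel, is a reduction from $F$ to $F'$, and satisfies $[\phi_0(x)]_{F'}=\sigma([x]_F)$; in particular $\lfs([x]_F)=\lfs([\phi_0(x)]_{F'})$ for every $x$. Lemma~\ref{lem:adjust}(ii) then supplies a simultaneous Borel isomorphism $\phi:X\to Y\res[\ran(\phi_0)]_{F'}$ from $(E,F)$ to $(E',F')\res[\ran(\phi_0)]_{F'}$, and $[\ran(\phi_0)]_{F'}=Y$ by surjectivity of $\sigma$, giving the desired simultaneous Borel isomorphism on all of $Y$.

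The main technical point I expect to be step (i): extracting, from the mere cardinality-matching $\hat g=\hat{g'}$, a Borel bijection of the class spaces that pointwise preserves local fine shape. Once that is in hand, step (ii) is essentially bookkeeping provided by the already-proved Lemma~\ref{lem:adjust}(ii), which does the per-class matching of $E$-classes uniformly in $x$.
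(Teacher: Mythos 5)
Your proposal is correct and follows essentially the same route as the paper: both construct a local-fine-shape-preserving Borel bijection $X/F\to Y/F'$ by composing the two parametrizations through the common ``normal form'' $\bigsqcup_\m\bigl(\{\alpha\st\gfs(E,F)(\alpha)=\m\}\times Y_\m\bigr)$ (your $h_\m$, $\psi_\m$ are exactly the paper's $\psi_X$, $\psi_Y^{-1}\circ\psi_X$ restricted to the $\m$-th piece, with your Lusin--Suslin step playing the role of Lemma~\ref{lem:BPhat}), and then finish by lifting through a transversal and applying Lemma~\ref{lem:adjust}(ii).
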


Before proving Theorem \ref{thm:SmoothPairsIsomBP} we make some remarks about its statement and establish a lemma. First, recall that the local fine shape function $\lfs(E,F)$ is defined not on $X$ but on $X/F$. Let us temporarily write $\dot{\lfs}{}(E,F)$ for the point map $x \mapsto\lfs(E,F)([x]_F)$ from $X$ to $\Sf$. Then $\dot{\lfs}{}(E,F)$ determines the smooth equivalence relation $\llbracket\dot{\lfs}{}(E,F)\rrbracket$ on $X$, and $\llbracket\lfs(E,F)\rrbracket$ is just the quotient of $\llbracket\dot{\lfs}{}(E,F)\rrbracket$ by $F$. We state Theorem \ref{thm:SmoothPairsIsomBP} using $\llbracket\lfs(E,F)\rrbracket$ rather than $\llbracket\dot{\lfs}{}(E,F)\rrbracket$ because it is the former that naturally arises in the proof. However, the following fact implies that $\llbracket\lfs(E,F)\rrbracket$ is Borel parametrized if and only if $\llbracket\dot{\lfs}{}(E,F)\rrbracket$ is, so either could be used in the statement of Theorem \ref{thm:SmoothPairsIsomBP}. The proof of Proposition \ref{lem:technicalBP} is given in the appendix (see \ref{prop:BPquotient}).

\begin{proposition} Let $D$ be a smooth equivalence relation on the standard Borel space $X$, and let $F\subseteq D$ be a smooth countable sub-equivalence relation. Then $D$ is Borel parametrized if and only if $D/F$ is Borel parametrized.\label{lem:technicalBP} \end{proposition}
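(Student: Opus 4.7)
The plan is to reduce the proposition to the case of equivalence classes of size $\cc$, using Fact \ref{fact:ctblSmooth} to collapse all notions (smooth, selective, Borel parametrized) for countable Borel equivalence relations.

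First, I would establish two structural facts. (a) $D/F$ is smooth: any Borel reduction $f:X\to\mathbb{R}$ witnessing smoothness of $D$ is constant on $F$-classes (since $F\subseteq D$) and so descends to a Borel reduction of $D/F$. (b) $D$ splits iff $D/F$ splits: the set $X^{(D)}_\cc$ is $F$-invariant and $\pi_F(X^{(D)}_\cc) = (X/F)^{(D/F)}_\cc$, so by the definition of the quotient Borel structure these are simultaneously Borel. This reduces the problem to showing, for each $\m\in\C^+$, that $D\res X^{(D)}_\m$ is Borel parametrized iff $D/F\res (X/F)^{(D/F)}_\m$ is. For $\m\leq\omega$ both restrictions are smooth countable equivalence relations and hence Borel parametrized by Fact \ref{fact:ctblSmooth} (using a Borel transversal $T$ of $F$, provided again by Fact \ref{fact:ctblSmooth}, to identify $X/F$ with $T$ and $D/F$ with $D\res T$).

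The entire content of the proposition therefore lies in the case $\m=\cc$. Writing $Z_\cc := X^{(D)}_\cc/D$ and $T_\cc := T\cap X^{(D)}_\cc$, the question becomes whether $X^{(D)}_\cc\cong Z_\cc\times V$ over $Z_\cc$ iff $T_\cc\cong Z_\cc\times V'$ over $Z_\cc$, for some standard Borel spaces $V, V'$ of cardinality $\cc$. Here $T_\cc$ is a Borel transversal of the smooth countable equivalence relation $F\res X^{(D)}_\cc$, which sits inside the fibers of the Borel surjection $X^{(D)}_\cc\to Z_\cc$.

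For the backward direction I would start from $T_\cc\cong Z_\cc\times V'$ and apply the Borel parametrization $X\cong\bigsqcup_n W_n\times V_n$ of $F$ provided by Fact \ref{fact:ctblSmooth}; inflating each transversal point to its $F$-class realizes $X^{(D)}_\cc\cong\bigsqcup_n(W_n\cap T_\cc)\times V_n$ as a Borel subset of $Z_\cc\times V''$ (where $V''=\bigsqcup_n V'\times V_n$ has cardinality $\cc$) with all $Z_\cc$-sections of size $\cc$. For the forward direction, transferring $F\res X^{(D)}_\cc$ via the parametrization to a smooth countable vertical-respecting $F^*$ on $Z_\cc\times V$ yields a Borel transversal $T^*\subseteq Z_\cc\times V$ with all sections of size $\cc$, and the task becomes to trivialize $T^*$ over $Z_\cc$. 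In both directions the main obstacle is the same: a general Borel subset of $Z_\cc\times V$ with $\cc$-sized sections need not be trivializable over $Z_\cc$ (Mauldin's phenomenon, as in Example \ref{ex:MauldinBP}). The key is that the sets in question arise from a Borel transversal of a smooth countable equivalence relation sitting inside a trivial fibration, and the Borel parametrization of this equivalence relation organizes its classes by size in a uniformly Borel way; I expect that combining this organization with the trivial product structure provides the rigidity needed to produce uniform-in-$z$ Borel bijections between sections and a fixed cardinality-$\cc$ space, completing the trivialization.
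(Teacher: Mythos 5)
Your reduction to the case where every $D$-class is uncountable (and where everything splits and is standard) is sound and matches the first step of the paper's proof, and your identification of the crux --- trivializing, over $Z_\cc = X^{(D)}_\cc/D$, a Borel transversal of $F$ sitting inside a trivialized fibration, and conversely --- is exactly right. But the proposal stops precisely where the real work begins: the sentence ``I expect that combining this organization with the trivial product structure provides the rigidity needed'' is not an argument, and the organization of $F$-classes by size does not by itself supply that rigidity. The forward direction is the serious one: given $X^{(D)}_\cc\cong Z_\cc\times V$ over $Z_\cc$, your transversal $T^*$ is just some Borel subset of $Z_\cc\times V$ with all sections of size $\cc$, and Mauldin's Example \ref{ex:MauldinBP} shows such sets need not be trivializable; nothing in your sketch isolates what is special about $T^*$ in a usable form. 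The paper's mechanism is Mauldin's measure-theoretic characterization of Borel parametrizations (\cite[2.1, 2.3, 2.4]{MauldinBP}): $D$ Borel parametrized yields a conditional probability distribution $\mu(y,\cdot)$ of atomless measures concentrated on the fibers; one then covers each fiber by countably many Borel transversals of $F$ (Fact \ref{fact:ctblSmooth}(6)), selects in a Borel way one of positive measure for each $y$ to assemble a single transversal $T$ with $\mu(y,T^y)>0$, and the restricted (still atomless, nontrivial) conditional measures witness that $D\res T\cong_B D/F$ is Borel parametrized. Some such quantitative input is needed, and your proposal offers no substitute for it.

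The backward direction of your sketch, by contrast, is essentially completable as written: once $T_\cc\cong Z_\cc\times V'$ over $Z_\cc$ with $|V'|=\cc$, the saturation $X^{(D)}_\cc$ contains a full product $Z_\cc\times V'$ with fibers of size $\cc$ inside $Z_\cc\times V''$, and a fiberwise Schr\"oder--Bernstein argument (or, as in the paper, extending an atomless fiber measure on $T^y$ by zero to the whole fiber) trivializes it. So the asymmetry matters: one direction of your plan can be finished by elementary means, but the other --- which is the substance of the proposition --- has a genuine gap.
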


The following lemma will be used in the proof of Theorem \ref{thm:SmoothPairsIsomBP}, and will imply that $\gfs(E,F)$ being Borel is a necessary (though by \ref{ex:2} not sufficient) condition for the isomorphism type of $(E,F)$ to be completely determined by $\gfs(E,F)$.

\begin{lemma} Let $X$ and $Y$ be standard Borel spaces and let $g:X\to Y$ be any Borel function. Then $\hat{g}$ is Borel if and only if $\Eq{g}$ splits and is standard. In particular, if $\Eq{g}$ is Borel parametrized then $\hat{g}$ is Borel.\label{lem:BPhat} \end{lemma}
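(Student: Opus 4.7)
The plan is to translate both conditions on $\Eq{g}$ directly into statements about the fiber-counting function $\hat{g}$, using the two identities
\[
X^{(\Eq{g})}_{\m} \ = \ g^{-1}\bigl(\hat{g}^{-1}(\m)\bigr) \quad (\m\in\C) \qquad \mbox{and} \qquad \ran(g) \ = \ \hat{g}^{-1}(\C^+).
\]
Since $g$ is itself a Borel reduction from $\Eq{g}$ to $\Delta(Y)$, Proposition \ref{prop:standard} gives the useful reformulation: $\Eq{g}$ is standard if and only if $\ran(g)$ is Borel in $Y$. Everything will flow from these two observations.

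For the forward direction, assuming $\hat{g}$ is Borel, I would immediately read off that $X^{(\Eq{g})}_{\cc}=g^{-1}(\hat{g}^{-1}(\cc))$ is Borel (so $\Eq{g}$ splits), and that $\ran(g)=\hat{g}^{-1}(\C^+)$ is Borel (so $\Eq{g}$ is standard by the reformulation above). This direction is essentially free.

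The real content is in the converse. Assume $\Eq{g}$ splits and is standard. Splitting combined with Proposition (\ref{prop:card}) gives that each $X^{(\Eq{g})}_{\m}$ is Borel, and my goal is to show each $\hat{g}^{-1}(\m)=g\bigl(X^{(\Eq{g})}_{\m}\bigr)$ is Borel in $Y$. For $\m=n\in\omega\cup\{\aleph_0\}$ the restriction $g\res X^{(\Eq{g})}_n$ is a Borel function with countable fibers, so Lusin--Novikov (applied to the graph of the restriction, whose horizontal sections in $X$ are countable) yields that $g\bigl(X^{(\Eq{g})}_n\bigr)$ is Borel. The step I view as the crux is the uncountable case $\m=\cc$, where Lusin--Novikov is unavailable; here standardness is used essentially, giving $\ran(g)$ Borel, so that
\[
\hat{g}^{-1}(\cc) \ = \ \ran(g)\setminus\bigcup_{n\in\omega\cup\{\aleph_0\}}\hat{g}^{-1}(n)
\]
is Borel by complementation, and $\hat{g}^{-1}(0)=Y\setminus\ran(g)$ is Borel as well. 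Together these show that $\hat{g}^{-1}(\m)$ is Borel for every $\m\in\C$, so $\hat{g}$ is Borel.

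The ``In particular" clause is immediate from the implication diagram in Figure~1: Borel parametrized equivalence relations split and are selective, hence in particular standard, so the reverse direction applies. The main obstacle was deciding which property of $\Eq{g}$ controls which fiber-set; once that is sorted out, the argument reduces to Lusin--Novikov plus complementation.
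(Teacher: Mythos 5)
Your proof is correct and takes essentially the same route as the paper's: decompose $Y$ into the level sets $\hat{g}^{-1}(\m)$, get the countably-fibered part Borel via Lusin--Novikov (the paper cites \ref{fact:ctblPartition} and leaves the push-forward along $g$ implicit), and use standardness of $\Eq{g}$ to control $\ran(g)$, with the converse amounting to the identities $X^{(\Eq{g})}_{\cc}=g^{-1}\big(\hat{g}^{-1}(\cc)\big)$ and $\ran(g)=\hat{g}^{-1}(\C^+)$. If anything, your derivation of the Borelness of $\hat{g}^{-1}(\cc)$ by complementation inside $\ran(g)$ is a bit more explicit than the paper's, which attributes it directly to splitting.
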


\begin{proof} Suppose $\Eq{g}$ splits and is standard. Since $\Eq{g}$ is standard, $Y\setminus\ran(g)$ is Borel, so $\hat{g}^{-1}(0)$ is Borel. Since $\Eq{g}$ splits, the set $C=\{y\in Y\st g^{-1}(y)\mbox{ is uncountable}\}=\hat{g}^{-1}(\cc)$ is Borel. That $\hat{g}^{-1}(m)$ is Borel for each $1\leq m\leq\omega$ now follows from \ref{fact:ctblPartition}.  Conversely, if $\Eq{g}$ is not standard then $\hat{g}^{-1}(0)=Y\setminus\ran(g)$ is not Borel, and if $\Eq{g}$ does not split then
\[
\{(x,y)\in\mbox{graph}(g)\st g^{-1}(y)\mbox{ is uncountable}\} \ = \ \pi_Y^{-1}\big(\hat{g}^{-1}(\cc)\big)
\]
is not Borel, which implies that $\hat{g}^{-1}(\cc)$ is not Borel. \end{proof}

\begin{proof}[Proof of Theorem \ref{thm:SmoothPairsIsomBP}] By Proposition \ref{prop:shape2}, we need only establish the backward direction. Let
\[
\phi_X \ : \ X/F \ \to \ \bigsqcup_{\m\in\C^+}(Z_\m\times Y_\m)
\]
be a Borel parametrization of $\llbracket \lfs(E,F)\rrbracket$. For each $\m\in\C^+$, let
\[
Z_{\m}(X) \ = \ \gfs(E,F)^{-1}(\m) \ = \ \left(\hat{\lfs}(E,F)\right)^{-1}(\m),
\]
so that $Z_{\m}(X)$ is the set of all $\alpha\in\Sf$ such that there are exactly $\m$ $F$-classes $C$ having local fine shape $\alpha$. Then each $Z_{\m}(X)$ is Borel by Lemma \ref{lem:BPhat}, and the map
\[
\begin{array}{rrrcl}
\psi_X & : & \displaystyle{\bigsqcup_{\m\in\C^+}\left(Z_\m\times Y_\m\right)} & \rightarrow & \displaystyle{\bigsqcup_{\m\in\C^+}\left(Z_\m(X)\times Y_\m\right)} \vspace{2mm} \\
& & (z,y) & \mapsto & \left(\,\lfs^{(E,F)}\left(\phi_X^{-1}(z,y)\right),\;y\,\right)
\end{array}
\]
is a Borel isomorphism from
\[
\bigsqcup_{\m\in\C^+}\left(\Delta(Z_\m)\times I(Y_\m)\right)\quad\mbox{to}\quad\bigsqcup_{\m\in\C^+}\left(\Delta(Z_{\m}(X))\times I(Y_\m)\right)
\]
such that for every $F$-class $C\subseteq X$, 
\[
(\pi_1\circ\psi_X\circ \phi_X)(C) \ = \ \lfs^{(E,F)}(C).
\]
In an exactly analogous manner define $\phi_Y$ and $\psi_Y$ using $(E',F')$ in place of $(E,F)$. Since $\gfs(E,F)=\gfs(E',F')$ and the definitions of $Z_{\m}(X)$ and $Z_{\m}(Y)$ depended only on $\gfs(E,F)$ and $\gfs(E',F')$, respectively, we have $Z_{\m}(X)=Z_{\m}(Y)$ for each $\m$. Therefore
\[
\rho \ := \ \phi_Y^{-1}\circ\psi_Y^{-1}\circ\psi_X\circ\phi_X
\]
is a Borel bijection from $X/F$ to $Y/F'$ such that for every $F$-class $C\in X/F$, $\lfs^{(E,F)}(C)=\lfs^{(E',F')}(\rho(C))$. Now $\rho$ induces a Borel reduction from $F$ to $F'$ with the same property, so $(E,F)\cong_B(E',F')$ by Lemma \ref{lem:adjust}. \end{proof}

We have the following converse of Theorem \ref{thm:SmoothPairsIsomBP}, which should be compared to Theorem \ref{thm:BP}.

\begin{theorem} Let $E\subseteq F$ be smooth countable equivalence relations. If $\llbracket \lfs(E,F)\rrbracket$ is not Borel parametrized, then there exist smooth countable equivalence relations $E'\subseteq F'$ such that $\gfs(E,F)=\gfs(E',F')$ but $(E,F)\not\cong_B(E',F')$.\label{thm:SmoothPairsNotBP} \end{theorem}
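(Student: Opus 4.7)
Set $L := \lfs(E,F) : X/F \to \Sf$ and $D := \llbracket L \rrbracket$, so by hypothesis $D$ is smooth but not Borel parametrized. The plan is to realize $(E',F')$ in the canonical form $(E_{L'}, F_{L'})$ of the construction in the proof of Lemma \ref{lem:gs}, for some Borel $L' : Z' \to \Sf$ with
\[
\hat{L'} \;=\; \hat{L} \;=\; \gfs(E,F) \qquad \text{and} \qquad \llbracket L'\rrbracket \;\not\cong_B\; D.
\]
Lemma \ref{lem:gs} then gives $\gfs(E',F') = \hat{L'} = \gfs(E,F)$; Lemma \ref{lem:easy2} gives $\llbracket\lfs(E',F')\rrbracket \cong_B \llbracket L'\rrbracket \not\cong_B \llbracket\lfs(E,F)\rrbracket$; and the contrapositive of Lemma \ref{lem:easy1} yields $(E',F') \not\cong_B (E,F)$.

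Everything thus reduces to producing such an $L'$. First I would apply Theorem \ref{thm:BP} to $D$ to obtain a smooth equivalence relation $D'$ on a standard Borel space $Z'$ with $\fs(D') = \fs(D)$ but $D' \not\cong_B D$, and then realize (a copy of) $D'$ as $\llbracket L'\rrbracket$ with the fibers labeled in $\Sf$ so that $\hat{L'} = \hat{L}$. The combinatorial matching is automatic, since $\fs(D') = \fs(D)$ means exactly that for each $\m \in \C^+$ the number of $D'$-classes of size $\m$ equals $|\hat{L}^{-1}(\m)|$; the content is carrying out the matching Borel-measurably.

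When $\hat{L}$ is a Borel function $\Sf \to \C$ (equivalently, by Lemma \ref{lem:BPhat}, when $D$ splits and is standard), each $\hat{L}^{-1}(\m)$ is a Borel subset of $\Sf$. In that case I would take $D' = \mathcal{E}(\fs(D))$, which is Borel parametrized and hence automatically not $\cong_B D$. Writing its ambient space as $Z' = \bigsqcup_\m Y_{\fs(D)(\m)} \times Y_\m$, the isomorphism theorem (\ref{fact:kuratowski}) supplies Borel bijections $\psi_\m : Y_{\fs(D)(\m)} \to \hat{L}^{-1}(\m)$, and setting $L'(w,y) := \psi_\m(w)$ on the $\m$-th piece produces the required $L'$.

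The main obstacle will be the remaining case, when $\hat{L}$ is not Borel --- i.e., when $D$ fails to split (so $\hat{L}^{-1}(\cc)$ is only analytic) or fails to be standard (so $\hat{L}^{-1}(0) = \Sf\setminus\ran L$ is only coanalytic). Then by Lusin--Suslin any Borel $L'$ with $\hat{L'} = \hat{L}$ has non-Borel range, so $\llbracket L'\rrbracket$ inherits whichever of these deficiencies $D$ has, and one cannot take $\mathcal{E}(\fs(D))$ as $D'$. My approach here would be to transcribe the explicit constructions from the proof of Theorem \ref{thm:BP}'s second claim from the target $2^\omega$ onto the target $\Sf$, using $L$ itself as the initial Borel reduction and mimicking either the addition of a parallel uncountable sheet along a non-Borel subset of $\ran L$ (to produce a non-standard witness) or the trimming of uncountable fibers of $L$ down to countable ones (to produce a standard witness). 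Those constructions produce two non-isomorphic smooth equivalence relations of fine shape $\fs(D)$, and each transcription can be arranged so that the fiber-counting function of the resulting $L'$ still agrees with $\hat{L}$; since $D$ is $\cong_B$ at most one of the two witnesses, the other yields the desired $\llbracket L'\rrbracket \not\cong_B D$.
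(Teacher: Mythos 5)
Your overall reduction --- produce a Borel $L':Z'\to\Sf$ with $\hat{L'}=\hat{L}$ and $\llbracket L'\rrbracket\not\cong_B\llbracket L\rrbracket$, then pass to $(E_{L'},F_{L'})$ and conclude via Lemmas \ref{lem:gs}, \ref{lem:easy2}, and the contrapositive of Lemma \ref{lem:easy1} --- and your treatment of the case where $\hat{L}$ is Borel both match the paper's proof, which takes $P=\bigsqcup_{\m}\gfs(E,F)^{-1}(\m)\times Y_\m$ with $L'$ the projection onto $\Sf$, so that $\llbracket L'\rrbracket$ is Borel parametrized and hence not isomorphic to $\llbracket L\rrbracket$.

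The second case is where the proposal breaks down, in two ways. First, the two witnesses you describe cannot both be realized subject to the constraint $\hat{L'}=\hat{L}$: as you yourself observe, $\hat{L'}^{-1}(0)=\Sf\setminus\ran(L')$ and $\hat{L'}^{-1}(\cc)$ are determined pointwise by $\hat{L}$, so \emph{every} admissible $L'$ has $\llbracket L'\rrbracket$ non-standard exactly when $D$ is non-standard and non-split exactly when $D$ is non-split. Hence if $D$ is not standard there is no ``standard witness'' at all, and if $D$ is standard but fails to split there is no ``non-standard witness''; in either subcase one horn of your dichotomy is vacuous. (Relatedly, literally trimming uncountable fibers down to countable ones changes $\hat{L}$ at those points and is never admissible.) Second, even granting two admissible witnesses, your route through Lemma \ref{lem:easy1} requires $\llbracket L_1\rrbracket\not\cong_B\llbracket L_2\rrbracket$ as bare equivalence relations, and the available distinguishing feature --- a non-uniformizable Borel fiber structure sitting over an uncountable Borel set $B\subseteq\hat{L}^{-1}(\cc)$ in one witness versus a full product over $B$ in the other --- is a property of the restrictions over $B$, not an isomorphism invariant of the whole relation, since an arbitrary isomorphism need not respect $B$. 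This is exactly why the paper invokes the stronger Lemma \ref{lem:last}: a simultaneous isomorphism of the pairs yields a local-fine-shape-preserving isomorphism, i.e.\ one commuting with the labelling functions $g,g'$, which therefore does restrict to the preimages of $B$, where selectivity (a Borel transversal over $B$ being a Borel uniformization) separates the two witnesses. Your Case 2 needs to be replaced by an argument of this kind.
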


In order to prove this we will need the following strengthening of Lemma \ref{lem:easy1}.

\begin{lemma} Let $E\subseteq F$ and $E'\subseteq F'$ be smooth countable equivalence relations. Then $(E,F)\cong_B(E',F')$ if and only if there exists a Borel isomorphism $\phi$ from $\llbracket \lfs(E,F) \rrbracket$ to $\llbracket \lfs(E',F') \rrbracket$ that preserves local fine shapes, i.e., such that for every $F$-class $C$, $\lfs(E,F)(C)=\lfs(E',F')(\phi(C))$.\label{lem:last} \end{lemma}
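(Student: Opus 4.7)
The plan is to prove both directions by reducing to Lemma 4.3. The forward direction is a routine observation: if $\Phi:X\to Y$ is a simultaneous Borel isomorphism from $(E,F)$ to $(E',F')$, then because $F$ and $F'$ are smooth countable (hence standard by Fact \ref{fact:ctblSmooth}), $\Phi$ descends to a Borel bijection $\tilde{\Phi}:X/F\to Y/F'$. For each $F$-class $C$, the restriction $\Phi\!\res\!C$ is a bijection onto $[\Phi(C)]_{F'}$ that reduces $E\!\res\!C$ to $E'\!\res\![\Phi(C)]_{F'}$, and hence $\lfs(E,F)(C)=\lfs(E',F')(\tilde{\Phi}(C))$. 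This both makes $\tilde{\Phi}$ preserve local fine shapes and forces it to be an isomorphism of the two smooth equivalence relations $\llbracket\lfs(E,F)\rrbracket$ and $\llbracket\lfs(E',F')\rrbracket$ defined on the quotients.

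For the backward direction, suppose $\phi:X/F\to Y/F'$ is a Borel isomorphism preserving local fine shapes. The plan is to lift $\phi$ to a Borel reduction from $F$ to $F'$ and then apply Lemma \ref{lem:adjust}(ii) to promote that reduction to a genuine simultaneous Borel isomorphism. Concretely, using Fact \ref{fact:ctblSmooth}, fix a Borel transversal $T'\subseteq Y$ for $F'$ and define $\Phi:X\to Y$ by letting $\Phi(x)$ be the unique element of $T'\cap\phi([x]_F)$. Then $\Phi$ is a Borel reduction from $F$ to $F'$, and the local-fine-shape-preservation hypothesis on $\phi$ translates to the pointwise condition $\lfs(E,F)([x]_F)=\lfs(E',F')([\Phi(x)]_{F'})$ for every $x\in X$.

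Applying Lemma \ref{lem:adjust}(ii) to $\Phi$ then yields a simultaneous Borel isomorphism $\Psi$ from $(E,F)$ onto $(E',F')\!\res\![\ran(\Phi)]_{F'}$. The final observation is that because $\phi$ is a \emph{bijection} of $X/F$ with $Y/F'$, the range of $\Phi$ meets every $F'$-class, so $[\ran(\Phi)]_{F'}=Y$ and $\Psi$ is the desired simultaneous isomorphism on the nose. I do not anticipate a serious obstacle here: essentially all the real combinatorial work has been absorbed into Lemma \ref{lem:adjust}, and what remains is to package the hypothesis in the point-map form that lemma requires and to note that the bijectivity of $\phi$ takes care of surjectivity onto $Y$.
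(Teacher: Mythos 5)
Your proof is correct. Both directions check out: in the forward direction the induced map $\tilde\Phi$ on quotients is indeed a Borel bijection (the quotients are standard since $F,F'$ are smooth countable), and since $\Phi$ restricted to an $F$-class $C$ is a bijective reduction of $E\res C$ onto $E'\res[\Phi(C)]_{F'}$ it preserves local fine shapes, which is exactly what makes $\tilde\Phi$ an isomorphism of $\llbracket\lfs(E,F)\rrbracket$ with $\llbracket\lfs(E',F')\rrbracket$. In the backward direction your transversal lift $\Phi$ is a Borel reduction from $F$ to $F'$ satisfying the pointwise hypothesis of Lemma \ref{lem:adjust}(ii), and surjectivity of $\phi$ gives $[\ran(\Phi)]_{F'}=Y$, so the isomorphism produced by that lemma lands onto all of $(E',F')$.

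The route differs from the paper's in one respect. Rather than citing Lemma \ref{lem:adjust}(ii), the paper's proof of this lemma re-derives the matching by hand: it introduces the intermediate relation $D$ defined by $x\mathrel{D}y\Leftrightarrow x\mathrel{F}y\wedge|[x]_E|=|[y]_E|$, uses Fact \ref{fact:ctblSmooth} to assign to each point a triple of indices $(k(x),m(x),n(x))$ recording its position within its $E$-class, the position of its $E$-class among same-sized $E$-classes in its $F$-class, and the size of its $E$-class, and then composes the resulting coordinatizations $\Phi,\Phi'$ with the map $\rho$ induced by $\phi$. The combinatorial content is the same as that of Lemma \ref{lem:adjust} (a uniformly Borel greedy matching of equal-sized $E$-classes within corresponding $F$-classes), so your version is shorter and arguably cleaner; indeed the paper itself uses exactly your strategy --- lift along a transversal, then invoke Lemma \ref{lem:adjust} --- as the final step in the proof of Theorem \ref{thm:SmoothPairsIsomBP}. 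The one thing the paper's explicit coordinatization buys is a concrete normal form $X\hookrightarrow X/F\times\omega^3$ for the pair, echoing the construction of $(E_g,F_g)$ in Lemma \ref{lem:gs}, but nothing in the statement of the lemma requires it.
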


\begin{proof} Suppose $E\subseteq F$ are defined on $X$ and $E'\subseteq F'$ on $Y$. The forward direction is clear, so assume $\phi:X/F\to Y/F'$ is a Borel isomorphism from $\llbracket \lfs(E,F) \rrbracket$ to $\llbracket \lfs(E',F') \rrbracket$ that preserves local fine shapes. Define the smooth equivalence relation $D$ on $X$ by
\[
x\mathrel{D}y \quad \Leftrightarrow \quad x\mathrel{F}y \ \wedge \ |[x]_E|=|[y]_E|.
\]
Then $E\subseteq D\subseteq F$. Using \ref{fact:ctblSmooth}, well-order in a uniform Borel manner the collection of $E$-classes within each $D$-class in order type $\omega$ or finite, and for each $x\in X$ let $m(x)$ be the index of $[x]_E$ in this well-ordering. Likewise, well-order in a uniform Borel manner the elements of each $E$-class in order type $\omega$ or finite, and for each $x\in X$ let $k(x)$ be the index of $x$ in this well-ordering. Also for each $x\in X$ let $n(x)=0$ if $[x]_E$ is infinite, and let $n(x)=|[x]_E|$ otherwise. The functions $x\mapsto k(x),m(x),n(x)$ are all Borel. Define the injective Borel function $\Phi:X\to X/F\times\omega^3$ by
\[
\Phi(x)=\langle [x]_F,k(x),m(x),n(x)\rangle.
\]
In an exactly analogous manner, define $\Phi':Y\to Y/F'\times\omega^3$ by
\[
\Phi'(y)=\langle [y]_{F'},k(y),m(y),n(y)\rangle.
\]
Define $\rho:X/F\times\omega^3\to Y/F'\times\omega^3$ by $\rho(\langle C,k,m,n\rangle)=\langle \phi(C),k,m,n\rangle$. Then $\Phi'^{-1}\circ\rho\circ\Phi$ is a simultaneous Borel isomorphism from $(E,F)$ to $(E',F')$. \end{proof}

\begin{proof}[Proof of Theorem \ref{thm:SmoothPairsNotBP}] We consider two cases. First suppose that $\llbracket \lfs(E,F)\rrbracket$ splits and is standard, so that $\gfs(E,F)$ is Borel by Lemma \ref{lem:BPhat}. For each $\m\in\C^+$ let $Y_\m$ be a fixed Borel subset of $\mathcal{N}$ of cardinality $\m$, and let
\[
P \ := \ \bigsqcup_{\m\in\C^+}\gfs(E,F)^{-1}(\m)\times Y_{\m} \quad \subseteq \quad \Sf\times\mathcal{N}.
\]
Since $\gfs(E,F)$ is Borel, so is $P$. Let $g:P\to\Sf$ be the projection map onto the first coordinate, so that $\hat{g}=\gfs(E,F)$ and $\Eq{g}$ is Borel parametrized. By our observations following the proof of Lemma \ref{lem:gs}, $\hat{g}=\gfs(\,E_g(P),F_g(P))$. Furthermore, $\Eq{g}\cong_B\llbracket \,\lfs(E_g(P),F_g(P))\,\rrbracket$ by Lemma \ref{lem:easy2}, so that $\llbracket \,\lfs(E_g(P),F_g(P))\,\rrbracket$ is Borel parametrized and thus is not isomorphic to $\llbracket \lfs(E,F)\rrbracket$. Hence $(E_g(P),F_g(P))$ is a pair of smooth countable equivalence relations such that $(E,F)\not\cong_B(E_g(P),F_g(P))$ even though $\gfs(E,F)=\gfs(E_g(P),F_g(P))$.

Now suppose that $\llbracket \lfs(E,F)\rrbracket$ either is not standard or does not split. Let
\[
U=\{\alpha\in \Sf\st \gfs(E,F)(\alpha)=\cc\},
\]
so that $U$ is non-Borel analytic, and hence uncountable. Fix an uncountable Borel set $B\subseteq U$.  Let $\tilde{X}=X/F$, and let $G$ be the Borel set
\[
G \ := \ \mbox{graph}\,(\lfs(E,F)) \ \bigcap \ \tilde{X}\times (\Sf\setminus B) \quad \subseteq \quad \tilde{X}\times \Sf.
\]
Let $D$ be a Borel subset of $\tilde{X}\times B$ with all horizontal sections $D^x$ uncountable such that $D$ does not admit a Borel uniformization. Let $P=D\cup G$ and let $P'=(\tilde{X}\times B)\cup G$. Let $g:P\to\Sf$ and $g':P'\to\Sf$ be the projections onto $\Sf$. By construction,
\[
\begin{array}{lllll}
\gfs(E,F) & = & \hat{g} & = & \gfs(E_g(P),F_g(P)) \\
               & = & \hat{g}' & = & \gfs(E_{g'}(P'),F_{g'}(P')).
\end{array}
\]
We claim, however, that $(E_g,F_g)\not\cong_B(E_{g'},F_{g'})$, so that $(E,F)$ must fail to be isomorphic to at least one of them. To see this, suppose for contradiction that $(E_g,F_g)\cong_B(E_{g'},F_{g'})$. Then by Lemma \ref{lem:last} there is a Borel isomorphism $\phi$ from $\Eq{\lfs(E_g,F_g)}$ to $\Eq{\lfs(E_{g'},F_{g'})}$ that preserves local fine shapes. By Lemma \ref{lem:easy2}, this implies that there is a Borel isomorphism $\phi':P\to P'$ from $\Eq{g}$ to $\Eq{g'}$ such that for all $p\in P$, $g(p)=g'(\phi'(p))$. In particular, then, $\phi'$ is a Borel isomorphism from $\Eq{g}\res\tilde{X}\times B$ to $\Eq{g'}\res\tilde{X}\times B$, which is impossible since the latter is selective but the former is not.
\end{proof}

This leaves us with the analogue of Problem \ref{prob:1} for smooth countable pairs.

\begin{problem} Classify pairs $E\subseteq F$ of smooth countable equivalence relations (for which $\Eq{\lfs(E,F)}$ is not Borel parametrized) up to simultaneous Borel isomorphism. \label{prob:2} \end{problem}

We summarize some of the constructions of this section in Figure \ref{fig:1}, which we now explain. For the classes of smooth equivalence relations, pairs of smooth countable equivalence relations, and local fine shape functions (i.e., Borel functions into $\Sf$), we actually mean to consider equivalence classes of these objects under the obvious notions of equivalence. For the first two this is just $\cong_B$, and for Borel functions into $\Sf$ it is equality up to a Borel isomorphism of domains, or essentially what in Section \ref{sec:singletons} we called \emph{Borel equivalence} following \cite{KMM}. Then the solid vertical and horizontal arrows represent explicit constructions that respect these equivalences. The two solid diagonal arrows are just the compositions of the appropriate vertical and horizontal ones.

The content of the diagram of solid arrows lies in the fact that the vertical arrows are mutually inverse bijections, while the horizontal ones are surjective. So for any Borel function $g:X\to\Sf$ there is a Borel isomorphism $\pi:B_g(X)\to X$ such that
\[
g\pi \ = \ \lfs(E_g,F_g),
\]
and for any pair $E\subseteq F$ of smooth equivalence relations on the standard Borel space $X$ we have
\[
(E,F) \ \cong_B \ \left(\, E_{\mbox{\scriptsize{\texttt{lfs}{$(E,F)$}}}}(X/F),\;F_{\mbox{\scriptsize{\texttt{lfs}{$(E,F)$}}}}(X/F)\,\right).
\]
Hence the diagram immediately displays the facts that
\[
\hat{g} \ = \ \gfs\big(E_g(X),F_g(X)\big) \quad \mbox{and} \quad \Eq{g} \ \cong_B \ \llbracket\lfs(E_g,F_g)\rrbracket.
\]

Next consider the dotted arrows. For every smooth equivalence relation $D$ there exists a Borel reduction $g$ from $D$ to $\Delta(\Sf)$, and fixing such $g$ we obtain the pair $(E_g,F_g)$ and note that $D\cong_B\Eq{g}\cong_B\llbracket\lfs(E_g,F_g)\rrbracket$. Likewise any global fine shape function $\phi$ arises as $\hat{g}$ for some Borel function $g$ into $\Sf$, in which case $\phi=\hat{g}=\gfs(E_g,F_g)$. However, we have no canonical way of recovering $g$ from either $\Eq{g}$ or $\hat{g}$ in general, which explains the use of dotted arrows.

\begin{figure}
\begin{center}
\begin{tikzpicture}[scale=1.0]
 \draw (-1.4,-.5) rectangle (1.4,1); 
 \node at (0,.5) {local fine};
 \node at (0,0) {shapes}; 
 \draw (3.4,-.5) rectangle (6.2,1);
 \node at (4.8,.5) {global fine};
 \node at (4.8,0) {shapes};
 \draw (-6.2,-.5) rectangle (-3.4,1);
 \node at (-4.8,.5) {smooth eq.};
 \node at (-4.8,0) {relations};
 \draw (-1.4,3.5) rectangle (1.4,5);
 \node at (0,4.5) {smooth};
 \node at (0,4) {countable pairs};
 \draw [->>] (1.6,.45) -- (3.2,.45);
 \draw [->>] (-1.6,.45) -- (-3.2,.45);
 \draw [<-] [dotted] (1.6,.15) -- (3.2,.15);
 \draw [<-] [dotted] (-1.6,.15) -- (-3.2,.15);
 \draw [>->>] (.4,1.2) -- (.4,3.3);
 \draw [<<-<] (-.4,1.2) -- (-.4,3.3);
 \draw [<-] [dotted] (1.6,4) -- (4.6,1.2);
 \draw [<-] [dotted] (-1.6,4) -- (-4.6,1.2);
 \node at (1.6,2.25) {$g\mapsto (E_g,F_g)$};
 \node at (-1.2,2.25) {$\lfs(\,\cdot\,,\,\cdot\,)$};
 \node at (2.4,.7) {$g\mapsto\hat{g}$};
 \node at (-2.4,.75) {$\Eq{g}\mapsfrom g$}; 
 \draw [<<-] (5,1.2) -- (1.6,4.4);
 \node at (5,3) {$\gfs(\,\cdot\,,\,\cdot\,)={\hat{\rule[2.5mm]{0mm}{0mm}\lfs}}{}(\,\cdot\,,\,\cdot\,)$};
 \draw [<<-] (-5,1.2) -- (-1.6,4.4);
 \node at (-4.2,3) {$\llbracket\lfs(\,\cdot\,,\,\cdot\,)\rrbracket$};
\end{tikzpicture}
\end{center}
\caption{}\label{fig:1}
\end{figure}

Finally, to conclude this section we consider the question of relative complexities of classification problems. Let us slightly relax the notion of Borel equivalence from \cite{KMM} that was discussed in Section \ref{sec:singletons} to allow $f$ and $g$ to have different domains. Thus we say that Borel functions $f:X\to\Sf$ and $g:Y\to\Sf$ are \emph{Borel equivalent}, and write $f\equiv_Bg$, if there is a Borel bijection $\phi:X\to Y$ such that $f=g\phi$. Let us also say that $f,g$ are \emph{weakly Borel equivalent}, and write $f\equiv_B^wg$, if there exist Borel bijections $\phi:X\to Y$ and $\psi:\ran(f)\to\ran(g)$ such that $\psi f=g\phi$. (Here we ask $\psi$ to be Borel measurable, which makes sense even if $\ran(f)$ and $\ran(g)$ are non-Borel analytic). So we have $\equiv_B\;\subseteq\;\equiv_B^w$, and it is easy to check that for any Borel functions $f:X\to\Sf$ and $g:Y\to\Sf$, $f\equiv_B^wg\:\Leftrightarrow\:\Eq{f}\cong_B\Eq{g}$.

Unfortunately, there is no natural way to realize the collections of smooth equivalence relations, pairs of smooth countable equivalence relations, or Borel functions into $\Sf$ as standard Borel spaces, so the usual framework of Borel equivalence relations does not apply to the equivalences on these classes pictured in Figure \ref{fig:1}. Nevertheless, we can interpret the correspondences given by the vertical arrows as expressing the fact that the problem of classifying pairs of smooth countable equivalence relations up to $\cong_B$ is essentially identical to that of classifying Borel functions into $\Sf$ up to $\equiv_B$. Similarly, the assignment $g\mapsto\Eq{g}$ can be understood as reducing the problem of classifying Borel functions up to $\equiv_B^w$ to that of classifying smooth equivalence relations up to $\cong_B$.

On the other hand, we know of no canonical way of recovering from $E$ a Borel function $g$ such that $E\cong_B\Eq{g}$, and likewise no canonical way of choosing a $\,\equiv_B$-class from within a $\,\equiv_B^w$-class. 
It would be interesting to find natural reductions, if they exist, relating the problems of Borel equivalence and weak Borel equivalence of Borel functions, or equivalently to determine whether there exist natural mappings in either direction between smooth equivalence relations and smooth countable pairs of equivalence relations that reduce the isomorphism problem of one to that of the other.

\appendix


\section*{Appendix}\label{sec:appendix}

We collect here some basic facts from descriptive set theory that are needed in the main body of the paper, along with the proof of Proposition \ref{lem:technicalBP}.

\section{Some background from Descriptive Set Theory}\label{sec:App1}

A \emph{standard Borel space} is a measurable space $(X,\mathcal{B})$ such that $\mathcal{B}$ is the $\sigma$-algebra of Borel sets generated by some Polish topology on $X$. Here a topological space $(X,\tau)$ is \emph{Polish} if it is separable and there is a complete metric on $X$ compatible with $\tau$. Our standard example is Baire space $\mathcal{N}=\omega^\omega$ of sequences of natural numbers, which is Polish in the product of discrete topologies. A map $f:X\to Y$ between standard Borel spaces $(X,\mathcal{B}_X)$ and $(Y,\mathcal{B}_Y)$ is \emph{Borel} if $f^{-1}(B)\in\mathcal{B}_X$ for every $B\in\mathcal{B}_Y$, or equivalently if $\mbox{graph}(f)$ is a Borel subset of the product $X\times Y$; $f$ is \emph{bimeasurable} if additionally $f(B)\in\mathcal{B}_Y$ for every $B\in\mathcal{B}_X$. A bimeasurable bijection between standard Borel spaces is called an \emph{isomorphism}. The image of a Borel set under an injective Borel function is Borel, so that an injective Borel function is an isomorphism onto its range. We make constant use of the following fact, due to Kuratowski:

\begin{fact}[The isomorphism theorem] The standard Borel spaces $X$ and $Y$ are isomorphic if and only if they have the same cardinality.\label{fact:kuratowski} \end{fact}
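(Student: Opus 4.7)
The forward direction is trivial: a Borel isomorphism is in particular a bijection. For the converse, the plan splits into the countable and uncountable cases.

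For the countable case, if $X$ is a countable standard Borel space, then any compatible Polish topology is $T_1$, so singletons are closed and hence Borel; therefore \emph{every} subset of $X$ is Borel. Consequently any bijection between two countable standard Borel spaces of the same cardinality is automatically a Borel isomorphism, and the result follows immediately.

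For the uncountable case, the plan is to prove the stronger statement that every uncountable standard Borel space is Borel isomorphic to $2^{\omega}$. This has two ingredients, assembled via a Borel Schr\"oder--Bernstein argument. First, I would show that any standard Borel space $X$ Borel-embeds into $2^{\omega}$: fix a compatible Polish topology on $X$ with a countable base $(U_n)$ and define $f:X\to 2^{\omega}$ by $f(x)(n)=1\Leftrightarrow x\in U_n$; this map is injective and Borel, and since the image of a standard Borel space under an injective Borel map is Borel, $f$ is a Borel isomorphism of $X$ with a Borel subset of $2^{\omega}$. Second, I would show that if $X$ is uncountable then $2^{\omega}$ Borel-embeds into $X$. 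This is the perfect set theorem for Polish spaces: via Cantor--Bendixson one peels off the countable scattered part of $X$ to get a nonempty perfect Polish subspace, inside of which a standard tree-construction yields a homeomorphic copy of $2^{\omega}$, i.e.\ a closed set $C\subseteq X$ and a homeomorphism $g:2^{\omega}\to C$, which in particular is a Borel isomorphism onto a Borel subset of $X$.

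The final step is the Borel Schr\"oder--Bernstein theorem: given Borel sets $A\subseteq Y$ and $B\subseteq X$ with Borel isomorphisms $X\cong_B A$ and $Y\cong_B B$, one constructs a Borel isomorphism $X\cong_B Y$ by the usual Cantor--Bernstein back-and-forth, partitioning $X$ according to the orbits of the relevant composition and patching the two maps together piecewise; Borel measurability is preserved because the partition is given by a Borel recursion on $\omega$. Combining the two embeddings with this Borel Schr\"oder--Bernstein yields $X\cong_B 2^{\omega}$, and therefore any two uncountable standard Borel spaces are Borel isomorphic to one another.

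The main obstacle is the perfect set theorem for Polish spaces, which itself rests on the Cantor--Bendixson analysis; everything else (the injective-Borel-image-is-Borel lemma and the Borel Schr\"oder--Bernstein construction) is formal bookkeeping once one has a Polish topology in hand.
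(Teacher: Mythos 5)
Your proof is correct, but note that the paper does not prove this statement at all: it is stated as a classical fact (Kuratowski's isomorphism theorem) and cited from the standard references, so there is no paper proof to compare against. Your argument is the standard textbook one (countable case by discreteness of the Borel structure; uncountable case by embedding $X$ into $2^{\omega}$ via indicator functions of a countable base, embedding $2^{\omega}$ into $X$ via the perfect set theorem, and closing with a Borel Schr\"oder--Bernstein argument), and it goes through as described. The one place where I would resist the phrase ``formal bookkeeping'' is the claim that an injective Borel image of a Borel set is Borel: that is the Lusin--Suslin theorem, a genuinely nontrivial result on par with the perfect set theorem, though the paper itself also takes it as known background.
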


A set $A$ in a standard Borel space $X$ is \emph{analytic} if it is the image of some Borel set under a Borel function, and \emph{coanalytic} if its complement is analytic. Suslin showed that non-Borel analytic sets exist, and further proved the following:

\begin{fact}[Suslin's Theorem] A subset of a standard Borel space is Borel if and only if it is both analytic and coanalytic.\label{fact:suslin} \end{fact}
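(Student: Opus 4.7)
The plan is to establish Suslin's theorem by reducing it to the Lusin separation theorem: any two disjoint analytic subsets of a standard Borel space $X$ can be separated by a Borel set, i.e., if $A, B \subseteq X$ are disjoint and analytic then there is a Borel $C \subseteq X$ with $A \subseteq C$ and $C \cap B = \emptyset$. The forward direction of Suslin's theorem is immediate: a Borel set is the image of itself under the identity (Borel) map, hence analytic, and its complement is also Borel and hence analytic, so Borel sets are coanalytic. Granted Lusin separation, the backward direction follows by applying it to the disjoint analytic pair $A$ and $X \setminus A$: the separating Borel set $C$ must satisfy $A \subseteq C$ and $C \subseteq A$, so $C = A$ is Borel.

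Thus the work is entirely in proving Lusin separation. First I would establish a closure principle: if $P = \bigcup_m P_m$ and $Q = \bigcup_n Q_n$ are such that every pair $(P_m, Q_n)$ can be separated by a Borel set $B_{m,n}$, then $P$ and $Q$ are separated by the Borel set $\bigcup_m \bigcap_n B_{m,n}$. Contrapositively, if $(P,Q)$ is \emph{not} separable then some pair $(P_m, Q_n)$ is not separable. Next, fix a compatible Polish topology on $X$ and use the standard representation of analytic sets to write $A = f(\mathcal{N})$ and $B = g(\mathcal{N})$ for continuous functions $f, g : \mathcal{N} \to X$ (refining the Polish topology on $X$ if necessary to make $f$ and $g$ continuous, which does not change the Borel structure).

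Now suppose for contradiction that the disjoint analytic sets $A$ and $B$ admit no Borel separation. For each $s \in \omega^{<\omega}$ let $A_s = f(\{\sigma \in \mathcal{N} : s \subset \sigma\})$ and $B_s = g(\{\tau \in \mathcal{N} : s \subset \tau\})$, so that $A = \bigcup_n A_{(n)}$ and $B = \bigcup_n B_{(n)}$. By the closure principle, some pair $(A_{(n_0)}, B_{(m_0)})$ is not separable. Iterating inside these sets, one constructs sequences $\sigma, \tau \in \mathcal{N}$ such that for every $k$ the pair $(A_{\sigma \res k},\, B_{\tau \res k})$ is not separable. Let $x = f(\sigma) \in A$ and $y = g(\tau) \in B$; since $A \cap B = \emptyset$, $x \ne y$, so a compatible metric yields disjoint open neighborhoods $U \ni x$ and $V \ni y$. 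Continuity of $f$ and $g$ gives some $k$ for which $A_{\sigma \res k} \subseteq U$ and $B_{\tau \res k} \subseteq V$, whence $U$ is an open (in particular Borel) set separating them, contradicting the choice of $(\sigma, \tau)$.

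The main obstacle is the Lusin separation theorem itself, and within that proof the delicate step is the simultaneous tree-construction of the bad branches $\sigma$ and $\tau$: one must ensure at each stage that non-separability genuinely propagates to a sub-pair, and then use continuity together with the Hausdorff property of $X$ exactly once at the end to produce the contradiction. The closure of the class of ``separable pairs'' under countable unions is routine bookkeeping with Borel operations, and the reduction of arbitrary analytic sets to continuous images of $\mathcal{N}$ under a refined Polish topology is a standard change-of-topology argument; once these are in place, the deduction of Suslin's theorem from Lusin separation is immediate.
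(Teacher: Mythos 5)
Your argument is correct and is the classical proof: the paper states Suslin's theorem as background (Fact \ref{fact:suslin}) without proof, deferring to the standard literature, and what you have written is exactly the textbook route --- reduce to the Lusin separation theorem, prove separation via the countable-union closure principle together with the tree construction of a non-separable pair of branches, and derive the contradiction from continuity and the Hausdorff property. One small inaccuracy in a parenthetical: refining the Polish topology on the codomain $X$ does not make a Borel map $f:\mathcal{N}\to X$ continuous (change of topology is performed on the domain, and here the domain is already $\mathcal{N}$, whose topology you do not want to alter); the correct invocation is simply the standard representation theorem that every nonempty analytic subset of a Polish space is a continuous image of $\mathcal{N}$, with the empty cases of $A$ or $B$ handled trivially. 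This does not affect the validity of the proof.
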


Concerning the possible cardinalities of analytic and coanalytic sets, we have:

\begin{fact} Every uncountable analytic (in particular Borel) set contains a homeomorph of $2^\omega$ and therefore has cardinality $\cc$ (\cite[4.3.5]{Srivastava}). Under the assumption of Analytic Determinacy, the same is true of coanalytic sets; in ZFC, every uncountable coanalytic set has cardinality $\aleph_1$ or $\cc$ (\cite[4.3.17]{Srivastava}).\label{fact:card} \end{fact}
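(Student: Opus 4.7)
For the first claim about analytic sets, the plan is to use the Suslin tree representation: every analytic $A\subseteq X$ can be written as $p[T]$ for some pruned tree $T$ on $\omega\times\omega$ (after fixing a Borel isomorphism of the ambient space with $\mathcal{N}$). Assuming $A$ is uncountable, I would build a Cantor scheme $(T_s)_{s\in 2^{<\omega}}$ of finite restrictions of $T$ such that each $T_s$ still has uncountable projection, by invoking the classical perfect-subtree dichotomy (any tree with uncountably many branches contains a perfect subtree). At each level I split along two incompatible finite extensions whose projections have disjoint, small-diameter open neighborhoods in $X$. Following the branches then produces a continuous injection $2^\omega\hookrightarrow A$; compactness of $2^\omega$ makes this a homeomorphism onto its image, giving a homeomorph of $2^\omega$ inside $A$ and hence $|A|=\cc$.

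For the ZFC dichotomy on coanalytic sets, the plan is to use the Lusin--Sierpi\'nski constituent decomposition via a coanalytic rank. Fix a $\bpi^1_1$-rank $\phi\colon C\to\omega_1$; then $C=\bigsqcup_{\alpha<\omega_1}C_\alpha$ where the constituents $C_\alpha=\phi^{-1}(\alpha)$ are Borel, so by the first part each $C_\alpha$ is either countable or has cardinality $\cc$. If some $C_\alpha$ is uncountable, then $|C|\geq\cc$ and certainly $|C|\leq\cc$, forcing $|C|=\cc$; otherwise all constituents are countable, so $C$ is a union of at most $\aleph_1$ countable sets and uncountability forces $|C|=\aleph_1$.

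Under Analytic Determinacy, the plan is to deduce the perfect set property for coanalytic $C$ by running the Davis-style $\ast$-game. Players alternate turns: player I plays $s_i\in 2^{<\omega}$ (extending the current stem), player II responds with a bit $b_i\in\{0,1\}$, and together they produce a branch $x\in 2^\omega$ which, via a fixed Borel embedding $2^\omega\hookrightarrow X$, names a point of $X$. The winning condition for I is $x\in C$. The payoff set for I is Borel-preimage of $C$ and hence coanalytic, so Analytic Determinacy applies. A winning strategy for I yields a continuous injection $2^\omega\hookrightarrow C$ (a Cantor set inside $C$); a winning strategy for II yields a covering of $C$ by countably many singletons, so $C$ is countable. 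Hence $C$ is countable or of size $\cc$.

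The main obstacle is the coanalytic case: the ZFC half requires the nontrivial constituent/rank machinery (one must invoke both the existence of a $\bpi^1_1$-norm and the boundedness theorem), and the $\cc$-conclusion for uncountable coanalytic $C$ genuinely needs a determinacy-like hypothesis since the statement ``every uncountable $\bpi^1_1$ set has a perfect subset'' is consistently false (it fails in $L$). The rest of the argument is standard perfect-set-theorem bookkeeping.
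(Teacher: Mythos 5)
The paper does not prove this Fact at all---it is stated with citations to \cite[4.3.5]{Srivastava} and \cite[4.3.17]{Srivastava}---and your argument is exactly the standard textbook proof those citations point to (Cantor scheme on the Suslin tree representation for the analytic perfect set theorem, the Lusin--Sierpi\'nski constituent decomposition for the ZFC bound $\aleph_1$ or $\cc$, and the Davis $*$-game under Analytic Determinacy, using that $\mbox{Det}(\bsig^1_1)$ is equivalent to $\mbox{Det}(\bpi^1_1)$), so it is correct and in substance the same. One cosmetic point: the parenthetical appeal to the perfect-subtree dichotomy is not the right lemma for the analytic case, since $[T]$ being uncountable does not control the projection $p[T]$; what actually drives the construction is precisely what you state next, namely maintaining that each $T_s$ has uncountable projection and splitting into two extensions whose projections can be separated by disjoint open sets (justified by the usual counting argument).
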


From \ref{fact:kuratowski} and \ref{fact:card} it follows that there is exactly one uncountable standard Borel space up to isomorphism. Furthermore, since sections of Borel sets are Borel, from \ref{fact:card} we have that every equivalence class of a Borel equivalence relation has cardinality in $\C^+=\{1,2,\ldots,\aleph_0,\cc\}$. A powerful generalization of \ref{fact:card} that we use constantly is Silver's Theorem.

\begin{fact}[Silver's Theorem \cite{Silver}] Let $E$ be a coanalytic equivalence relation on the standard Borel space $X$. If $E$ has uncountably many classes, then $\Delta(2^\omega)\leq_BE$.\label{fact:Silver} \end{fact}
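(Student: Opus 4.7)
The plan is to prove the theorem via effective descriptive set theory and a Gandy--Harrington Cantor scheme construction, following Silver's original strategy. By relativization to a real parameter coding $E$, it suffices to establish the lightface version: if $E$ is $\Pi^1_1$ on a recursively presented standard Borel space $X$ and has uncountably many equivalence classes, then $\Delta(2^\omega) \leq_B E$. I would fix a compatible complete metric on $X$ for diameter control in the construction.

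First I would isolate the ``small part'' $S \subseteq X$, namely the union of all (lightface) $\Sigma^1_1$ sets $A$ whose $E$-saturation $[A]_E$ is countable. The key technical step, which is the heart of Silver's proof, is to show via $\Sigma^1_1$-boundedness and the first reflection theorem that $S$ is itself $\Sigma^1_1$ and meets only countably many $E$-classes. Granting this, set $Y := X \setminus S$; since $E$ has uncountably many classes but $S$ absorbs only countably many, $Y$ is a non-empty $\Pi^1_1$ set. The defining property of $S$ now delivers the crucial largeness principle: every non-empty $\Sigma^1_1$ subset of $Y$ meets uncountably many $E$-classes.

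Next I would build a Cantor scheme $(U_s)_{s \in 2^{<\omega}}$ of non-empty $\Sigma^1_1$ subsets of $Y$ such that the diameters of $U_s$ shrink to zero, $\overline{U_{s \frown i}} \subseteq U_s$ for $i \in \{0,1\}$, and $(\overline{U_s} \times \overline{U_t}) \cap E = \emptyset$ whenever $s, t \in 2^n$ are distinct. The splitting step is possible because any non-empty $\Sigma^1_1$ $U \subseteq Y$ meets at least two $E$-classes, so there exist $x, y \in U$ with $(x,y) \notin E$; since $E$ is $\Pi^1_1$ its complement is $\Sigma^1_1$, so one can separate $x$ and $y$ by small $\Sigma^1_1$ neighborhoods whose closures lie inside $(X \times X) \setminus E$, and then refine further to respect all previously imposed incompatibility conditions (only finitely many at each stage). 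The map $f : 2^\omega \to X$ defined by $\{f(\alpha)\} = \bigcap_n \overline{U_{\alpha \upharpoonright n}}$ is then continuous, injective, and a Borel reduction from $\Delta(2^\omega)$ to $E$: for distinct $\alpha, \beta$ that split at level $n$, the containments $f(\alpha) \in \overline{U_{\alpha \upharpoonright n}}$ and $f(\beta) \in \overline{U_{\beta \upharpoonright n}}$ force $(f(\alpha), f(\beta)) \notin E$.

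The main obstacle is the absorption lemma for $S$: that a union indexed over all $\Sigma^1_1$ codes of individually small sets remains $\Sigma^1_1$ and collectively small. This requires effective reflection principles and is not accessible by elementary means; it is precisely the ingredient that makes Silver's theorem substantially deeper than, for example, the perfect set theorem for analytic sets. Once it is in hand, the fusion argument producing $f$ is a routine adaptation of the classical Cantor scheme technique.
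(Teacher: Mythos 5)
The paper does not actually prove this statement: it is imported as a known classical result with a citation to Silver, so there is no in-paper argument to compare against. Your outline correctly identifies the architecture of the modern (Harrington-style) proof --- relativize to the lightface case, discard a ``small'' part that covers only countably many classes, and run a splitting/fusion construction on the remainder --- but two of the steps are wrong as stated, not merely left unproven.

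The fatal one is the splitting step. You propose to separate $E$-inequivalent points $x,y$ by ``small $\Sigma^1_1$ neighbourhoods whose closures lie inside $(X\times X)\setminus E$,'' with closures taken in a compatible complete metric, and to maintain $(\overline{U_s}\times\overline{U_t})\cap E=\emptyset$. Since $E$ is only $\Pi^1_1$, its complement is $\Sigma^1_1$ and in particular need not be open in the Polish topology, so a pair $(x,y)\notin E$ need not admit any box neighbourhood --- let alone one with closure --- avoiding $E$. This is not repairable by shrinking: for $E=E_0$ (eventual equality on $2^\omega$), a Borel equivalence relation with uncountably many classes, $E_0$ is dense in $2^\omega\times 2^\omega$, so no two nonempty open sets $U,V$ satisfy $(U\times V)\cap E_0=\emptyset$ and your scheme cannot get past the first split. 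The actual argument works in the Gandy--Harrington topology, where $\Sigma^1_1$ sets \emph{are} open; the incompatibility conditions are imposed on the $\Sigma^1_1$ sets themselves with no closures, the nonemptiness of $\bigcap_n U_{\alpha\restriction n}$ is secured by the strong Choquet property of that topology rather than by metric completeness, and one only intersects with small basic sets of the Polish topology to force the intersection to be a singleton.

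Second, the complexity of the small part is inverted. If $S$ is only shown to be $\Sigma^1_1$, then $Y=X\setminus S$ is $\Pi^1_1$ and you have no nonempty $\Sigma^1_1$ subset of $Y$ with which to seed the construction. The point of the reflection argument is the opposite: the small part (in the standard treatment, the union of all lightface $\Sigma^1_1$ sets contained in a single $E$-class, which meets only countably many classes for the trivial reason that there are only countably many lightface $\Sigma^1_1$ sets) is rewritten via reflection as a union of $\Delta^1_1$ sets over a $\Pi^1_1$ set of codes and is therefore $\Pi^1_1$, so that its complement is a nonempty $\Sigma^1_1$ set every nonempty $\Sigma^1_1$ subset of which meets at least two classes --- exactly the largeness needed to split.
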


We require a few additional facts concerning analytic sets, coanalytic sets, and cardinality.

\begin{fact}[{\cite[4.3.7]{Srivastava}, \cite[29.19 and 29.21]{Kechris}}] \label{fact:AnalyticRep} Let $X$ and $Y$ be standard Borel spaces. If $B\subseteq X\times Y$ is Borel (or even analytic), then $\{x\in X\st B_x\mbox{ is uncountable}\}$ is analytic.  In fact, a set $A\subseteq X$ is analytic if and only if there exists a Borel set $B\subseteq X\times\mathcal{N}$ such that
\[
A = \{x\in X\st B_x\mbox{ is uncountable}\} = \{x\in X\st B_x\ne\emptyset\}.
\]
\end{fact}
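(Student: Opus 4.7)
The plan is to reduce both parts of Fact \ref{fact:AnalyticRep} to classical results in descriptive set theory. The first claim is the more delicate one and rests on the Perfect Set Theorem together with a uniformization argument; the second admits an elementary proof via a thickening trick on sections.

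For the first claim, Fact \ref{fact:card} yields the equivalence: $B_x$ is uncountable if and only if $B_x$ contains a copy of $2^\omega$, if and only if there exists a continuous injection $f:2^\omega\to Y$ with $f(2^\omega)\subseteq B_x$. Working in the Polish space $C(2^\omega,Y)$ of continuous maps (in the uniform topology), the injectivity of $f$ is Borel in $f$, so it suffices to show that
$$S \ = \ \{(x,f)\in X\times C(2^\omega,Y)\st f\text{ is injective and }f(2^\omega)\subseteq B_x\}$$
is analytic, since then $\{x\in X\st B_x\text{ is uncountable}\}=\pi_X(S)$ is analytic as a projection. The condition $f(2^\omega)\subseteq B_x$ unfolds as $\forall z\in 2^\omega,(x,f(z))\in B$, a universal quantifier over the compact space $2^\omega$ applied to a Borel or analytic relation in $(x,z,f)$. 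For Borel $B$ this is handled by a standard compactness argument: universal quantification over a compact Polish parameter space preserves Borelness by induction on the Borel hierarchy. For analytic $B$, the same compactness idea must be combined with the Jankov--von Neumann uniformization of analytic sets in order to handle the extra existential quantifier coming from an analytic representation. The details are carried out in \cite{Kechris}, 29.19 and 29.21.

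For the characterization of analytic sets, one direction is immediate: if $A=\{x\st B_x\neq\emptyset\}$ for Borel $B\subseteq X\times\mathcal{N}$, then $A=\pi_X(B)$ is analytic by definition. For the converse, given analytic $A\subseteq X$ I would fix a closed set $C\subseteq X\times\mathcal{N}$ with $A=\pi_X(C)$, choose a Borel isomorphism $\mathcal{N}\cong\mathcal{N}\times\mathcal{N}$, and transport $C\times\mathcal{N}$ across it to produce a Borel set $B\subseteq X\times\mathcal{N}$. Every non-empty section $B_x$ then contains a Borel-isomorphic copy of $\mathcal{N}$ and is in particular uncountable, while $B_x\neq\emptyset\Leftrightarrow C_x\neq\emptyset\Leftrightarrow x\in A$, giving both identifications simultaneously.

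The main obstacle is the analyticity of $S$ in the case that $B$ is only analytic: the interplay between the universal quantifier over $2^\omega$ and the existential quantifier implicit in an analytic representation of $B$ must be untangled by the classical uniformization machinery invoked above, without which one ends up in $\Pi^1_1$ rather than $\Sigma^1_1$. Once this and Fact \ref{fact:card} are granted, the remaining steps are formal applications of Kuratowski's isomorphism theorem and basic properties of projections.
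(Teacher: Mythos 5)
The paper itself offers no proof of this Fact (it is quoted from Srivastava and Kechris), so your proposal must stand on its own, and its second half does: the characterization of analytic sets via the fattening $C\times\mathcal{N}$ of a closed set $C$ with $\pi_X(C)=A$, pushed through a Borel isomorphism $\mathcal{N}\times\mathcal{N}\cong\mathcal{N}$, is correct and standard. The first half, however, has a genuine gap: the assertion that universal quantification over a compact Polish parameter space preserves Borelness is false. Dually, projections of Borel sets along compact factors need not be Borel --- Example \ref{ex:AnalyticProjection} with $Y=2^\omega$ gives a Borel $P\subseteq X\times 2^\omega$ whose projection is properly analytic, so $\{x : \forall z\in 2^\omega\,(x,z)\notin P\}$ is properly coanalytic. (The induction you invoke breaks where complementation meets countable unions: $\forall z$ does not commute with $\bigcup_n$.) Concretely, taking $Y=2^\omega$ and evaluating your set $S$ at $f=\mathrm{id}_{2^\omega}$ yields the section $\{x : B_x=2^\omega\}$, which can be properly coanalytic; hence $S$ is in general not Borel, only coanalytic, and its projection is only $\Sigma^1_2$. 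This is exactly the trap you name in your last paragraph but do not escape: Jankov--von Neumann uniformization does not help, since a $\sigma(\Sigma^1_1)$-measurable selector gives no control over the complexity of the universally quantified condition.

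The standard repair --- and what the cited results in \cite{Kechris} actually do --- is to replace $B$ by a closed set before quantifying: write $B=\pi_{X\times Y}(F)$ for some closed $F\subseteq X\times Y\times\mathcal{N}$. The uniform perfect set argument then shows that $B_x$ is uncountable if and only if there is a continuous $g:2^\omega\to Y\times\mathcal{N}$ with $g(2^\omega)\subseteq F_x$ and $\pi_Y\circ g$ injective: one refines a Cantor scheme witnessing uncountability of $B_x$ so that the witnesses in the $\mathcal{N}$-coordinate converge along branches. Now the condition $g(2^\omega)\subseteq F_x$ is \emph{closed} in $(x,g)$ precisely because $F$ is closed and $2^\omega$ is compact, injectivity of $\pi_Y\circ g$ is $G_\delta$ in $g$, so the witness set is Borel and its projection is analytic. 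Your skeleton (perfect set theorem plus a projection over a space of continuous maps) is the right one; what keeps the complexity at $\Sigma^1_1$ is the closed representation of $B$, not compactness of $2^\omega$ acting on Borel sets.
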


On the other hand, Lusin's Unicity Theorem \cite[18.11]{Kechris} states that for $B\subseteq X\times Y$ Borel, the set $\{x\in X\::\: |B_x|=1\}$ is coanalytic. We need the following generalization of this:

\begin{fact}[{\cite[Lemma 1]{KMM}}] Let $X$ and $Y$ be standard Borel spaces, with $B\subseteq X\times Y$ Borel. Then for each $n=0,1,\ldots,\aleph_0$, the set $\{x\in X\::\: |B_x|=n\}$ is coanalytic. \label{fact:genunicity} \end{fact}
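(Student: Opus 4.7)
The plan is to handle each value of $n \in \{0, 1, \ldots, \aleph_0\}$ separately, using that the ``at most $n$'' statements admit clean direct $\Pi^1_1$ formulations while the ``exactly $n$'' statement requires a structural refinement. The case $n = 0$ is immediate: $\{x \st B_x = \emptyset\} = X \setminus \pi_X(B)$ is the complement of an analytic set, hence coanalytic.

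For finite $n \geq 1$, I would first note the direct bounds that $\{x \st |B_x| \leq n\}$ is $\Pi^1_1$ via the formula ``every $(n+1)$-tuple of elements in $B_x$ contains a repetition'', while $\{x \st |B_x| \geq n\}$ is $\Sigma^1_1$ as the projection of the Borel set of strictly-distinct $n$-tuples in $B$. Their intersection is precisely $\{x \st |B_x| = n\}$, which a priori lies only in $\Pi^1_1 \cap \Sigma^1_1$, not obviously in $\Pi^1_1$. To sharpen, I would introduce the auxiliary set
\[
C_n \, := \, \{(x, y_1, \ldots, y_n) \in X \times Y^n \st y_1, \ldots, y_n \text{ distinct}, \; (x, y_i) \in B \text{ for each } i, \; B_x \subseteq \{y_1, \ldots, y_n\}\},
\]
which is the intersection of a Borel set with the $\Pi^1_1$ condition $\forall y\,[(x, y) \in B \Rightarrow y \in \{y_1, \ldots, y_n\}]$, and is thus itself coanalytic. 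Its projection onto $X$ is exactly $\{x \st |B_x| = n\}$, and each nonempty $x$-section of $C_n$ contains precisely $n!$ tuples, so $C_n$ has bounded finite vertical sections. I would then conclude by invoking a strengthening of Lusin's Unicity Theorem: the projection of a coanalytic set of this specific form---a Borel set cut down by a universal-quantifier exhaustion condition, with bounded finite sections---is coanalytic.

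For $n = \aleph_0$, I would combine the direct $\Pi^1_1$ formulation of $\{x \st |B_x| < \aleph_0\}$ (via ``no sequence in $Y^\omega$ enumerates distinct elements of $B_x$'') with the coanalyticity of $\{x \st |B_x| \leq \aleph_0\} = X \setminus \{x \st B_x \text{ uncountable}\}$ (coanalytic by \ref{fact:AnalyticRep}), and run a parallel auxiliary construction using injective $\omega$-sequences enumerating $B_x$. The main obstacle throughout is the projection step: while the ``at most / at least'' complexity bounds follow from direct quantifier-counting, showing the exact-cardinality set is coanalytic requires the bounded-section refinement of Lusin's Unicity Theorem, where the Borelness of the original $B$---rather than mere coanalyticity of the auxiliary $C_n$---is what permits transferring $\Pi^1_1$-ness through the projection.
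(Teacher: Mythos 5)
The paper does not actually prove this statement---it is imported verbatim as \cite[Lemma 1]{KMM}---so there is no internal argument to compare yours against; I will judge the proposal on its own terms. Your case $n=0$ is correct, and your observation that the naive decomposition only places $\{x \st |B_x|=n\}$ in $\bpi^1_1\cap\bsig^1_1$ (or as a difference of two coanalytic sets), which is not obviously coanalytic, is exactly the right diagnosis of where the difficulty lies. The problem is that your resolution of that difficulty is not a proof. The set $\pi_X(C_n)$ \emph{is} the set $\{x\st |B_x|=n\}$ you are trying to analyze, so the reduction to $C_n$ gains nothing unless the principle you invoke---that the projection of a coanalytic set with bounded finite sections is coanalytic---is actually available. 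As a statement about coanalytic sets it is false: by Kond\^{o}'s uniformization theorem every coanalytic $P\subseteq X\times Y$ contains a coanalytic uniformization $P^*$ with all sections of size at most one and $\pi_X(P^*)=\pi_X(P)$, and projections of coanalytic sets are in general properly $\bsig^1_2$. Your hedges (``of this specific form,'' ``the Borelness of the original $B$\ldots is what permits transferring $\Pi^1_1$-ness'') correctly locate where the real content must lie, but no argument is supplied there, and that step is precisely the entire content of the lemma.

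The standard ways to close the gap are the First Reflection Theorem or effective descriptive set theory. For instance: fix a parameter $p$ coding $B$ as a $\Delta^1_1(p)$ set. By the effective perfect set theorem, whenever $B_x$ is countable every point of $B_x$ is $\Delta^1_1(x,p)$. Hence for finite $n\geq 1$, $|B_x|=n$ if and only if $|B_x|\leq n$ and there exist distinct $y_1,\ldots,y_n\in\Delta^1_1(x,p)$ with each $(x,y_i)\in B$; the restricted quantifier ``$\exists y\in\Delta^1_1(x,p)$'' applied to a $\Pi^1_1$ matrix yields a $\Pi^1_1$ predicate (Kleene's theorem on restricted quantification), so the whole condition is coanalytic. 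The case $n=\aleph_0$ is handled the same way, intersecting with the coanalytic set $\{x\st B_x\mbox{ is countable}\}$ from \ref{fact:AnalyticRep} and asking for arbitrarily large finite $\Delta^1_1(x,p)$-witness tuples. This is where the Borelness of $B$ (as opposed to mere coanalyticity of your auxiliary set $C_n$) genuinely enters; without some such ingredient the proposal does not constitute a proof.
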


Using these facts together with Silver's Theorem we can prove the following.

\begin{proposition} Let $E$ be a Borel equivalence relation on the standard Borel space $X$. Then
\begin{enumerate}
\item[(i)] for all $\m\in\C^+$, $\n_{\m}(E)\in\C\cup\{\aleph_1\}$;
\item[(ii)] $\n_\cc(E)\in\C$;
\item[(iii)] if $X^{(E)}_\cc$ is Borel (i.e., if $E$ splits), then $X^{(E)}_{\m}$ is Borel and $\n_{\m}(E)\in\C$ for all $\m\in\C^+$.
\end{enumerate} \label{prop:card}
\end{proposition}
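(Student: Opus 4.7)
The plan is to prove (ii) first via a trick that extends $E$ in a way that preserves coanalyticity, so that Silver's theorem becomes applicable; (i) will then follow from the coanalytic cardinality dichotomy (Fact~\ref{fact:card}) combined with a short cardinal arithmetic; and (iii) will follow from the standard theory of countable Borel equivalence relations. The main obstacle is (ii): the naive approach of applying Silver's theorem to $E\res X_{\cc}^{(E)}$ fails, since by Fact~\ref{fact:AnalyticRep} the set $X_{\cc}^{(E)}$ is only analytic and so is not in general a standard Borel space on which Silver can be invoked.

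To handle (ii), I would let $C := X\setminus X_{\cc}^{(E)}$, which is coanalytic by Fact~\ref{fact:AnalyticRep}, and set
\[
E' \; := \; E \;\cup\; (C\times C).
\]
Because $C$ is $E$-invariant (every $E$-class is either contained in $X_{\cc}^{(E)}$ or disjoint from it), $E'$ is an equivalence relation on $X$. The set $C\times C$ is coanalytic as the intersection of the preimages of $C$ under the two coordinate projections, so $E'$ is a union of a Borel and a coanalytic set, hence coanalytic. By construction the $E'$-classes are precisely the $E$-classes contained in $X_{\cc}^{(E)}$, together with the single extra class $C$ if $C\ne\emptyset$. Silver's theorem (Fact~\ref{fact:Silver}) then gives $\n(E')\in\C$, from which $\n_{\cc}(E)\in\C$ immediately follows.

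For (i), the case $\m=\cc$ is immediate from (ii). For $\m\in\{1,\ldots,\aleph_0\}$, the set $X_{\m}^{(E)}$ is coanalytic by Fact~\ref{fact:genunicity}, so $|X_{\m}^{(E)}|\in\C\cup\{\aleph_1\}$ by Fact~\ref{fact:card}; since every $E$-class inside $X_{\m}^{(E)}$ has cardinality $\m\leq\aleph_0$, the identity $|X_{\m}^{(E)}|=\m\cdot\n_{\m}(E)$ combined with a short case split on the value of $|X_{\m}^{(E)}|$ forces $\n_{\m}(E)\in\C\cup\{\aleph_1\}$. For (iii), if $X_{\cc}^{(E)}$ is Borel then so is $X_{\leq\aleph_0}^{(E)}$, and $E\res X_{\leq\aleph_0}^{(E)}$ is a countable Borel equivalence relation on a standard Borel space. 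By the Feldman--Moore representation (Fact~\ref{fact:ctblPartition}) it is induced by a Borel action of a countable group, from which it follows routinely that each $X_{\m}^{(E)}$ is Borel. Silver's theorem applied directly to the Borel equivalence relation $E\res X_{\m}^{(E)}$ on the standard Borel space $X_{\m}^{(E)}$ then yields $\n_{\m}(E)\in\C$ for every $\m\in\C^+$.
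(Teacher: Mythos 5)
Your proposal is correct and takes essentially the same route as the paper: the coanalytic extension $E'=E\cup\big(X^{(E)}_{\leq\omega}\times X^{(E)}_{\leq\omega}\big)$ followed by Silver's theorem for (ii), the coanalytic cardinality dichotomy applied to $X^{(E)}_{\m}$ together with $\n_{\m}(E)\cdot\m=|X^{(E)}_{\m}|$ for (i), and Fact~\ref{fact:ctblPartition} plus Silver on each $E\res X^{(E)}_{\m}$ for (iii). The only difference is cosmetic: you are slightly more explicit than the paper in spelling out the final application of Silver's theorem in part (iii).
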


\begin{proof}  Let $E$ be a Borel equivalence relation on the standard Borel space $X$. Recall that for each $\m\in\C$, $X^{(E)}_{\m}$ is the set of $x\in X$ such that $|[x]_E|=\m$, so that we have $\n_{\m}(E)\cdot\m=|X^{(E)}_{\m}|$. Since $X^{(E)}_{\leq\omega}$ is coanalytic, $E\cup\big(X^{(E)}_{\leq\omega}\times X^{(E)}_{\leq\omega}\big)$ is a coanalytic equivalence relation on $X$ with $\n_\cc(E)+1$ many classes; therefore if $\n_\cc(E)$ is uncountable, then we must have $\n_\cc(E)=\cc$ by Silver's Theorem. This proves (ii) and part of (i). To complete the proof of (i), suppose that $\m\in\C^+$ is countable, so that $X^{(E)}_\m$ is coanalytic by \ref{fact:genunicity}. Then if $\n_{\m}(E)$ is uncountable, we must have $\n_{\m}(E)=|X^{(E)}_\m|\in\{\aleph_1,\cc\}$ by \ref{fact:card}. Finally, if $X^{(E)}_\cc$ is Borel then $E\res X^{(E)}_{\leq\omega}$ is a countable Borel equivalence relation and claim (iii) is Fact \ref{fact:ctblPartition} below.
\end{proof}

In ZFC we cannot rule out $\aleph_1$ as a possible cardinality for $\n_\m(E)$, $\m\ne\cc$.

\begin{proposition}\label{prop:pathological} It is consistent with ZFC$\,+\,\neg$CH that for each function $\alpha:\C^+\to\C\cup\{\aleph_1\}$ such that $\alpha(\cc)=\cc$, there is a smooth equivalence relation $E$ such that $\fs(E)=\alpha$. \end{proposition}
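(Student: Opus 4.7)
The approach is to exhibit a single model of $\mathsf{ZFC}+\neg\mathsf{CH}$ in which the conclusion holds. I would fix a model $M$ with $\cc > \aleph_1$ containing a coanalytic set $A \subseteq 2^\omega$ of cardinality $\aleph_1$. A standard source is the generic extension of $L$ obtained by adding $\aleph_2$ Cohen reals: in the extension $\cc = \aleph_2$, while the largest thin $\Pi^1_1$ set (which in $L$ is all of $2^\omega$, of size $\aleph_1$) retains cardinality $\aleph_1$. I would cite this classical consistency result rather than reproducing its proof.

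Within $M$, given $\alpha : \C^+ \to \C \cup \{\aleph_1\}$ with $\alpha(\cc) = \cc$, I would construct $E$ as a disjoint union $E = \bigsqcup_{\m \in \C^+} E_\m$ over the countable index set $\C^+$. For each $\m$ with $\alpha(\m) \in \C$, take the routine construction $E_\m = \Delta(Z_\m) \times I(W_\m)$ on $Z_\m \times W_\m$ with $|Z_\m| = \alpha(\m)$ and $|W_\m| = \m$ (taking $E_\m$ on the empty space when $\alpha(\m) = 0$); this contributes exactly $\alpha(\m)$ classes of size $\m$ and no others.

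The delicate case is a countable $\m$ with $\alpha(\m) = \aleph_1$. Applying \ref{fact:AnalyticRep} to the analytic set $2^\omega \setminus A$, fix a Borel $B' \subseteq 2^\omega \times \mathcal{N}$ with all nonempty sections of cardinality $\cc$ and $B'_x \neq \emptyset \iff x \notin A$. After relocating $B'$ via \ref{fact:kuratowski} if necessary, fix a Borel subset $S_\m \subseteq \mathcal{N}$ of cardinality $\m$ disjoint from the second-coordinate projection of $B'$, and set $B = (2^\omega \times S_\m) \cup B'$. Then $B$ is Borel with $|B_x| = \m$ for $x \in A$ and $|B_x| = \cc$ for $x \notin A$, so the vertical section equivalence relation $E_\m$ on $B$ is smooth (witnessed by the Borel reduction $(x,y) \mapsto x$) and satisfies $\n_\m(E_\m) = |A| = \aleph_1$, $\n_\cc(E_\m) = \cc$, and $\n_\n(E_\m) = 0$ for all other $\n \in \C^+$. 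The hypothesis $\alpha(\cc) = \cc$ is precisely what permits the $\cc$-many parasitic size-$\cc$ classes contributed by each such $E_\m$ to be harmlessly absorbed in the assembled disjoint union, since $\cc + \cc = \cc$.

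The main obstacle is the model-theoretic input: securing a model of $\neg\mathsf{CH}$ that nevertheless contains a coanalytic set of cardinality $\aleph_1$. Once this is in hand, the Borel constructions and the disjoint union are routine, and the assembled $E = \bigsqcup_{\m \in \C^+} E_\m$ is a smooth equivalence relation on a standard Borel space satisfying $\fs(E) = \alpha$.
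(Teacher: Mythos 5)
Your proof is correct, but it takes a genuinely different route from the paper's. Both arguments rest on the same consistency input---a model of ZFC$\,+\,\neg$CH containing a coanalytic set of cardinality $\aleph_1$ (you are more explicit about which model, though your parenthetical is off: the largest thin $\Pi^1_1$ set in $L$ is certainly not all of $2^\omega$, which contains perfect sets and so is not thin; what matters, and is true, is that it has cardinality $\aleph_1$ in $L$ and retains that cardinality after adding $\aleph_2$ Cohen reals). Where you diverge is in how the fine shape is realized. The paper places a coanalytic set $C_m$ of cardinality $\alpha(m)$ in the $m$-th piece of a decomposition of $\mathcal{N}$ for each countable $m$, invokes \cite[Theorem 5]{KMM} to produce a single Borel function $g$ whose fibers of cardinality $m$ lie exactly over $C_m$, and takes $E=\Eq{g}$; the entire fine shape, including the $\aleph_1$ values, is delivered at once by that theorem, with the uncountable classes arranged by ``adjusting $g$'' to have uncountably many uncountable fibers. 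You instead build $E$ as a countable disjoint union, handling each cardinal separately: product relations $\Delta(Z_\m)\times I(W_\m)$ for values of $\alpha$ in $\C$, and, for countable $\m$ with $\alpha(\m)=\aleph_1$, a vertical-section relation on a Borel set manufactured from \ref{fact:AnalyticRep} so that sections have size $\m$ exactly over the coanalytic set $A$ and size $\cc$ off it, with the resulting $\cc$-many parasitic uncountable classes absorbed by the hypothesis $\alpha(\cc)=\cc$. Your route is more elementary and self-contained---it replaces the appeal to \cite[Theorem 5]{KMM} by a direct construction and makes explicit both the choice of model and the role of the hypothesis $\alpha(\cc)=\cc$---at the cost of some bookkeeping; the paper's is shorter but leans on a nontrivial external theorem and leaves these points implicit.
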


\begin{proof} Decompose $\mathcal{N}$ into countably many homeomorphic copies of itself, $\mathcal{N}=\bigsqcup Y_m$, where $m$ ranges over $\{0,1,2,\ldots,\aleph_0\}$. Let $\alpha$ be given as in the hypothesis. Working in a model of ZFC$\,+\,\neg$CH, for each $m\in\{1,2,\ldots,\aleph_0\}$ let $C_m$ be a coanalytic set of cardinality $\alpha(m)$ in $Y_m$. By \cite[Theorem 5]{KMM} there is a Borel function $g:\mathcal{N}\to\mathcal{N}$ such that for each $m\in\{1,\ldots,\aleph_0\}$, $C_m=\{y\in\mathcal{N}\st |g^{-1}(\{y\})|=m\}$, and by adjusting $g$ if necessary we can take it to have uncountably many uncountable fibers. Now let $x\mathrel{E}y\Leftrightarrow g(x)=g(y)$. \end{proof}

Next we recall the Lusin-Novikov uniformization theorem together with some of its consequences. If $B\subseteq X\times Y$, a \emph{uniformization} of $B$ is a subset $C\subseteq B$ such that $\pi_X(B)=\pi_X(C)$ and for every $x\in X$, $C_x$ contains at most one point. Since images of Borel sets under injective Borel functions are Borel, any set $B\subseteq X\times Y$ admitting a Borel uniformization has Borel projection onto $X$. 

\begin{fact}[Lusin-Novikov uniformization] Let $X$ and $Y$ be standard Borel spaces and let $B\subseteq X\times Y$ be Borel. If every section $B_x$ of $B$ is countable, then $B$ admits a Borel uniformization and therefore $\pi_X(B)$ is Borel. 
(See \cite[5.8.11]{Srivastava} or \cite[18.10]{Kechris}). \end{fact}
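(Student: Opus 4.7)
The plan is to reduce the uniformization claim to the following decomposition lemma: any Borel set $B\subseteq X\times Y$ with countable vertical sections can be written as a countable union $B=\bigcup_n D_n$, where each $D_n$ is Borel and every section $(D_n)_x$ has at most one point. Granted this, the rest is routine: since $D_n$ is the graph of a partial Borel injection $X\to Y$, its first projection $\pi_X(D_n)$ is Borel (injective Borel images are Borel, by \ref{fact:kuratowski} and the remarks preceding it), so $\pi_X(B)=\bigcup_n\pi_X(D_n)$ is Borel; and the function $f(x)=$ the unique point of $(D_n)_x$ where $n$ is least with $x\in\pi_X(D_n)$ uniformizes $B$, with $\mbox{graph}(f)=\bigcup_n\bigl(D_n\cap(Z_n\times Y)\bigr)$ Borel, where $Z_n=\pi_X(D_n)\setminus\bigcup_{k<n}\pi_X(D_k)$.

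For the decomposition lemma, fix a countable base $\{U_n\}$ for a Polish topology on $Y$ compatible with its Borel structure, and set
\[
D_n \ = \ \{(x,y)\in B \st y\in U_n \text{ and } B_x\cap U_n=\{y\}\}.
\]
Each $(D_n)_x$ has at most one point by design. Moreover $\bigcup_n D_n=B$: given $(x,y)\in B$, countability of $B_x$ makes it discrete in $Y$, so $y$ is isolated in $B_x$ and there is some basic $U_n\ni y$ with $B_x\cap U_n=\{y\}$.

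The main obstacle is showing that each $D_n$ is Borel. Observe that $(x,y)\in D_n$ iff $(x,y)\in B\cap(X\times U_n)$ and $|(B\cap(X\times U_n))_x|=1$. By Fact \ref{fact:genunicity}, the set $A_n=\{x\in X\st |(B\cap(X\times U_n))_x|=1\}$ is coanalytic. A direct proof that $A_n$ is also analytic is subtle because showing $\{x\st B_x\cap U_n=\emptyset\}$ is analytic is essentially the projection statement we are trying to prove. The cleanest way around this circularity is the standard ``change of topology'' reduction: refine the Polish topology on $X\times Y$ to a finer Polish one in which $B$ becomes \emph{closed} while preserving Borel structure (Kechris 13.1), and work by transfinite induction on the Borel class of $B$, handling closed $B$ via a direct tree argument in $\omega^\omega$ (where isolated points in countable closed sections correspond to minimal separating finite sequences in a natural Borel way). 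Once each $D_n$ is Borel, disjointifying and applying Suslin's theorem (\ref{fact:suslin}) to $A_n$, the decomposition and hence the uniformization follow.
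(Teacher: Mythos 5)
The paper offers no proof of this Fact; it is quoted from the literature (Srivastava 5.8.11, Kechris 18.10), so there is nothing in-paper to compare against and your proposal must stand on its own. It does not: the covering claim in your decomposition lemma is false at the first step. You assert that countability of $B_x$ makes $B_x$ discrete, so that every $y\in B_x$ is isolated in $B_x$ and hence lies in some $D_n$. But a countable subset of a Polish space need not have any isolated points: $\mathbb{Q}\subseteq\mathbb{R}$ is the standard example. Taking $B=X\times\mathbb{Q}\subseteq X\times\mathbb{R}$, every section is countable and dense-in-itself, every $B_x\cap U_n$ is infinite or empty, so every $D_n$ is empty and $\bigcup_nD_n\ne B$. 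The change-of-topology patch does not repair this: a finer Polish topology on $X\times Y$ making $B$ closed is not a product topology, and even a countable \emph{closed} subset of a Polish space, such as $\{0\}\cup\{1/n\st n\geq 1\}$, need not consist of isolated points. Baire category only gives a dense set of isolated points in each closed section, so exhausting the sections forces a transfinite Cantor--Bendixson iteration, and the uniform Borelness of that iteration in $x$ (equivalently, bounding the ranks below $\omega_1$) requires a \bsig$^1_1$-boundedness argument that your sketch does not contain.

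The second problem is that the Borelness of the partial graphs $D_n$ --- which you rightly flag as the main obstacle and then defer to an unexecuted induction on Borel class --- is not a technical detail but the entire content of the theorem. The circularity you notice (that deciding $B_x\cap U_n=\emptyset$ in a Borel way is essentially the projection statement being proved) is real, and the known routes around it are not an induction on Borel rank but either the first reflection theorem for coanalytic sets or effective descriptive set theory (every member of a countable $\Sigma^1_1(x)$ set is $\Delta^1_1(x)$, together with a suitable enumeration of codes); the closed case is not elementary either, for the Cantor--Bendixson reason above. As written, the proposal reduces Luzin--Novikov to a lemma that is false in the stated form and whose corrected form is equivalent to the theorem itself, so the argument has a genuine gap rather than a repairable omission.
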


The Lusin-Novikov uniformization theorem can be used to prove an important representation theorem for countable Borel equivalence relations due to Feldman and Moore (see \cite[5.8.13]{Srivastava}).

\begin{fact}[Feldman-Moore, \cite{FM}] If $E$ is a countable Borel equivalence relation on the standard Borel space $X$, then there is a countable group $\Gamma$ and a Borel action of $\Gamma$ on $X$ such that $E$ is the orbit equivalence relation $E^X_\Gamma$ arising from the action.\label{fact:FM} \end{fact}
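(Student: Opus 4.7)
The plan is to produce a countable family of Borel involutions of $X$ whose graphs jointly cover $E$ and which otherwise move points only within their $E$-classes; the group they generate will then act in a Borel fashion on $X$ with orbit equivalence relation exactly $E$. First I would apply the Lusin-Novikov uniformization theorem to $E$, regarded as a Borel subset of $X \times X$ with all vertical sections countable, to obtain Borel sets $D_n \subseteq X$ and Borel functions $f_n : D_n \to X$ such that $E = \bigcup_n \mbox{graph}(f_n)$. Since $\Delta(X) \subseteq E$, one may arrange that each $f_n$ is total by extending it to be the identity off $D_n$ without affecting the covering property.

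Next, I would convert each $f_n$ into a Borel involution, or rather a countable family of them. Since $E$ is symmetric with countable sections, $\mbox{graph}(f_n)^{-1}$ also has countable vertical sections, so applying Lusin-Novikov a second time allows us to partition $\mbox{graph}(f_n)$ into countably many Borel pieces on each of which $f_n$ acts injectively. Hence we may assume each $f_n : D_n \to R_n := f_n(D_n)$ is a Borel bijection between Borel sets. The delicate point is that $D_n$ and $R_n$ may overlap, and iterating $f_n$ may produce nontrivial orbit structure. To handle this, I would analyze the directed graph on $D_n \cup R_n$ given by the edges $(x, f_n(x))$: its weakly connected components are either finite cycles or chains. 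Using Lusin-Novikov once more to select Borel representatives and to carry out a Borel coloring of the edge set (into finitely or countably many pieces whose endpoint sets are disjoint), one decomposes the graph of $f_n$ into countably many Borel pieces, each of which is a Borel bijection $\phi : A \to B$ between \emph{disjoint} Borel sets $A, B \subseteq X$. For each such piece define a Borel involution $\sigma_\phi : X \to X$ by $\sigma_\phi(x) = \phi(x)$ for $x \in A$, $\sigma_\phi(x) = \phi^{-1}(x)$ for $x \in B$, and $\sigma_\phi(x) = x$ otherwise.

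Finally, let $\Gamma$ be the free product of countably many copies of $\mathbb{Z}/2\mathbb{Z}$ (a countable group), with one generator identified with each of the countably many involutions $\sigma_\phi$ constructed above, acting on $X$ in the obvious Borel way. For any $(x,y) \in E$ with $x \neq y$, the pair lies in $\mbox{graph}(\phi)$ for some such $\phi$ by construction, so $y = \sigma_\phi(x)$ and thus $y$ lies in the $\Gamma$-orbit of $x$; conversely, each generator $\sigma_\phi$ moves points only within their $E$-classes, so $\Gamma$-orbits are contained in $E$-classes. Therefore $E = E_\Gamma^X$.

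The main obstacle is the combinatorial step of decomposing a Borel injection $f_n : D_n \to R_n$ with possibly overlapping domain and range into countably many Borel pieces whose domains and ranges are disjoint, so that swapping along each piece yields a genuine Borel involution. This orbit-decomposition step must be carried out entirely within the Borel category, and requires a careful Borel analysis of the structure of the chain-or-cycle components of $f_n$ together with repeated applications of Lusin-Novikov to obtain the necessary Borel transversals and colorings.
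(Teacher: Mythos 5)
The paper itself offers no proof of this statement: it is recorded as Fact~\ref{fact:FM} with a citation to \cite{FM} (see also \cite[5.8.13]{Srivastava}), so there is no internal argument to compare against. Your overall architecture is the standard one: Lusin--Novikov writes $E$ as a countable union of graphs of Borel partial functions, a second application of Lusin--Novikov to the horizontal sections refines these to partial injections, each injective piece with disjoint domain and range is turned into a Borel involution, and the countable group generated by these involutions acts Borel-ly with orbit equivalence relation $E$. The first, second, and final steps are correct as you describe them.

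The gap is in the step you yourself flag as delicate. You propose to disjointify the domain and range of a Borel partial injection $f_n\colon D_n\to R_n$ by analyzing the chain-or-cycle components of the graph $\{(x,f_n(x))\st x\in D_n\}$ and using Lusin--Novikov ``to select Borel representatives'' of those components. This cannot work in general: the components are precisely the classes of the orbit equivalence relation generated by the single Borel function $f_n$, and such relations need not be smooth --- the odometer on $2^\omega$ is a Borel bijection whose orbit equivalence relation is $E_0$ --- so no Borel transversal exists, and Lusin--Novikov provides uniformizations of sets with countable sections, not transversals of non-smooth equivalence relations. Even granting representatives, a proper edge $2$-coloring of a $\mathbb{Z}$-chain amounts to a Borel choice of parity along each orbit, which fails for the same example. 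The standard repair is much simpler and avoids orbit structure entirely: discard the diagonal (the identity of $\Gamma$ already accounts for it), fix a countable Borel separating family $(U_k)$ for $X$, and write
\[
\mbox{graph}(f_n)\setminus\Delta(X) \ = \ \bigcup_k\,\{(x,f_n(x))\st x\in U_k,\ f_n(x)\notin U_k\}.
\]
Each piece is the graph of a Borel injection whose domain is contained in $U_k$ and whose range is contained in $X\setminus U_k$, so domain and range are automatically disjoint and your involution construction applies verbatim. With that substitution the argument is complete and is essentially the classical proof.
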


\begin{fact}[see {\cite[18.15]{Kechris}}] If $E$ is a countable Borel equivalence relation on the standard Borel space $X$, then for each $1\leq m\leq\omega$, the set $X_m=\{x\in X\st |[x]_E|=m\}$ is Borel.\label{fact:ctblPartition} \end{fact}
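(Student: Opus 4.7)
The plan is to reduce the statement to a transparent combinatorial expression via the Feldman--Moore theorem (\ref{fact:FM}). First I would fix a countable group $\Gamma = \{\gamma_i : i \in \omega\}$ acting in a Borel fashion on $X$ so that $E = E^X_\Gamma$. Then for every $x \in X$ one has $[x]_E = \{\gamma_i \cdot x : i \in \omega\}$, so the task of counting $|[x]_E|$ reduces to counting distinct values among the Borel maps $x \mapsto \gamma_i \cdot x$ as $i$ ranges over $\omega$.

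Next, for each $n \geq 1$, I would establish that the set
$$B_n := \{x \in X : |[x]_E| \geq n\}$$
is Borel. The key observation is the identity
$$B_n \;=\; \bigcup_{(i_1, \ldots, i_n) \in \omega^n} \;\bigcap_{1 \leq j < k \leq n} \{x \in X : \gamma_{i_j} \cdot x \neq \gamma_{i_k} \cdot x\},$$
since $|[x]_E| \geq n$ precisely when the orbit enumeration $\langle \gamma_i \cdot x \rangle$ contains at least $n$ distinct values. Each inner set is Borel because the action maps are Borel and the diagonal of $X$ is Borel; the resulting countable union of countable intersections of Borel sets is therefore Borel.

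From here the conclusion is immediate: for $1 \leq m < \omega$ we have $X_m = B_m \setminus B_{m+1}$, which is Borel as a difference of Borel sets, and $X_\omega = \bigcap_{n \geq 1} B_n$, which is Borel as a countable intersection. There is no real obstacle to this approach once Feldman--Moore is in hand, since everything else is a countable Boolean combination of Borel sets built from the action maps; the single nontrivial input is the passage from the abstract countable Borel equivalence relation to an explicit Borel enumeration of each equivalence class.
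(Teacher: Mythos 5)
Your proposal is correct and follows essentially the same route as the paper: both invoke Feldman--Moore to write $E$ as an orbit equivalence relation and then express $\{x : |[x]_E| \geq n\}$ as a countable Boolean combination of the Borel sets $\{x : \gamma_{i_j}\cdot x \neq \gamma_{i_k}\cdot x\}$. The only cosmetic difference is that the paper records both $X_{\geq m}$ and $X_{\leq m}$ via explicit quantifier formulas, whereas you obtain $X_m$ as $B_m \setminus B_{m+1}$ (and $X_\omega$ as $\bigcap_n B_n$), which amounts to the same thing.
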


\begin{proof} Fix by the Feldman-Moore theorem a countable group $\Gamma=\{\gamma_i\st i\in\omega\}$ and a Borel action of $\Gamma$ on $X$ such that $E=E^X_\Gamma$. The claim follows from the fact that for each $1\leq m<\omega$,
\[
x\in X_{\geq m} \quad \Leftrightarrow \quad (\exists i_1,\ldots,i_m\in\omega)\bigwedge_{1\leq j\ne k\leq m}\gamma_{i_j}\cdot x\ne\gamma_{i_k}\cdot x
\]
and
\[
x\in X_{\leq m} \quad \Leftrightarrow \quad (\forall i_0,\ldots,i_m\in\omega)\bigvee_{0\leq j\ne k\leq m}\gamma_{i_j}\cdot x=\gamma_{i_k}\cdot x. \qedhere
\]
\end{proof}

Next we recall examples that can be used to separate the notions of smooth, standard, selective, and split Borel equivalence relations.

\begin{example}[{\cite[26.2\mbox{ and }29.21]{Kechris}}] For any uncountable standard Borel spaces $X$ and $Y$, there exists a Borel set $P\subseteq X\times Y$ such that $\pi_X(P)$ is not Borel, so in particular $P$ does not admit a Borel uniformization. Moreover, $P$ can be taken to have all nonempty sections $P_x$ uncountable.\label{ex:AnalyticProjection} \end{example}

\begin{example}[{\cite[18.17]{Kechris}, \cite[5.1.7]{Srivastava}}] For any uncountable standard Borel spaces $X$ and $Y$, there exists a Borel set $P\subseteq X\times Y$ such that $\pi_X(P)=X$ but $P$ does not admit a Borel uniformization. Moreover, $P$ can be taken to have all sections $P_x$ uncountable.\label{ex:NotUniformizable} \end{example}

The following ``fattening" trick is sometimes helpful in obtaining uncountable sections in contexts similar to those of Examples \ref{ex:AnalyticProjection} and \ref{ex:NotUniformizable}.

\begin{fact} Let $X$ and $Y$ be standard Borel spaces and let $B\subseteq X\times Y$ be Borel. Then there exists a Borel set $D\subseteq X\times Y$ containing $B$ such that $\pi_X(B)=\pi_X(D)$, $D_x$ is uncountable for all $x\in \pi_X(D)$, and $B$ admits a Borel uniformization if and only if $D$ does. \end{fact}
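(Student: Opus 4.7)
The plan is to construct $D$ as the union of $B$ with a canonical Borel ``thickening'' whose sections over $\pi_X(B)$ are built out of a single copy of $2^\omega$. Assume without loss of generality that $Y$ is uncountable: if $Y$ is countable then every section of $B$ is countable and Lusin--Novikov already gives a Borel uniformization of $B$, while no Borel set in $X\times Y$ can have uncountable sections, so the statement is only substantive in the uncountable case. By the Kuratowski isomorphism theorem \ref{fact:kuratowski}, fix a Borel isomorphism $\phi:Y\to Y\times 2^\omega$ and write $\phi(y)=(\phi_1(y),\phi_2(y))$. Set
\[
D_0 \;:=\; \{(x,y)\in X\times Y \st (x,\phi_1(y))\in B\}
\]
and $D:=B\cup D_0$.

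Once $D$ is defined the easy properties fall out quickly. Since $D_0$ is the preimage of $B$ under the Borel map $(x,y)\mapsto(x,\phi_1(y))$, both $D_0$ and $D$ are Borel, and $B\subseteq D$ by construction. If $(x,y)\in D_0$ then $x\in\pi_X(B)$, so $\pi_X(D)\subseteq\pi_X(B)$; the reverse inclusion is free from $B\subseteq D$. Finally, for any $x\in\pi_X(B)$ and any $y_0\in B_x$ the section $(D_0)_x$ contains $\phi^{-1}(\{y_0\}\times 2^\omega)$, which is an uncountable Borel subset of $Y$, so $D_x$ is uncountable.

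The main step is the uniformization equivalence. The forward direction is immediate: any Borel uniformization $C\subseteq B$ satisfies $C\subseteq D$ and $\pi_X(C)=\pi_X(B)=\pi_X(D)$, so $C$ already uniformizes $D$. For the converse, let $u:\pi_X(D)\to Y$ be the Borel selector associated with a Borel uniformization of $D$. Define
\[
u'(x)\;=\;\begin{cases} u(x) & \mbox{if } (x,u(x))\in B, \\ \phi_1(u(x)) & \mbox{otherwise.} \end{cases}
\]
The set $\{x\st (x,u(x))\in B\}$ is Borel as the preimage of $B$ under the Borel map $x\mapsto(x,u(x))$, so $u'$ is Borel. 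In the first case $u'(x)=u(x)\in B_x$ by hypothesis, and in the second case $u(x)\in D_x\setminus B_x\subseteq (D_0)_x$, so by the very definition of $D_0$ we have $(x,\phi_1(u(x)))\in B$, i.e.\ $u'(x)\in B_x$. Hence the graph of $u'$ is a Borel uniformization of $B$.

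The only real subtlety is in this last paragraph: a Borel selector for $D$ may land either in the $B$-part or in the thickening $D_0$, and the two possibilities demand different recipes for producing a selector for $B$. The definition of $D_0$ is rigged so that $\phi_1$ projects each section $(D_0)_x$ into $B_x$, and this is precisely what lets the case split be resolved uniformly in a Borel way.
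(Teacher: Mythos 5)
Your construction is exactly the paper's: both thicken $B$ by pulling it back along the first coordinate of a Borel isomorphism $\phi:Y\to Y\times 2^\omega$ (the paper writes this as $\phi:\mathcal{N}\to\mathcal{N}^2$ after normalizing the spaces), and both recover a uniformization of $B$ from one of $D$ by keeping the selected point when it lies in $B$ and applying $\phi_1$ otherwise. The proof is correct and follows the paper's argument essentially verbatim, with your explicit handling of countable $Y$ being a slightly more careful version of the paper's ``without loss of generality'' normalization.
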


\begin{proof} Without loss of generality, $X=Y=\mathcal{N}$. Fix a Borel bijection $\phi:\mathcal{N}\rightarrow\mathcal{N}^2$, and write $\phi(x)=(\phi_1(x),\phi_2(x))\in\mathcal{N}^2$. Define a new Borel set $B'\subseteq X\times Y$ by
\[
(x,y)\in B'\quad\Leftrightarrow\quad (x,\phi_1(y))\in B,
\]
and let $D=B\cup B'$. Clearly $D$ has the same projection as $B$ and has all nonempty sections uncountable. A Borel uniformization for $B$ is again one for $D$. Conversely, suppose $C$ is a Borel uniformization of $D$, and define $\alpha:X^2\to X^2$ by $\alpha(x,y)=(x,\phi_1(y))$. Then $\alpha(C\cap(B'\setminus B))\cup(C\cap B)$ is a Borel uniformization of $B$. \end{proof}

Finally, we record a useful theorem of Mauldin that we use to prove that coarse shape is a complete biembeddability invariant for smooth Borel equivalence relations (Proposition \ref{prop:smoothemb}).

\begin{fact}[Mauldin \cite{MauldinBF}] Let $X$ and $Y$ be Polish spaces, $A\subseteq X\times Y$ analytic, and suppose
\[
U:=\{x\in X\st A_x\mbox{ is uncountable}\}
\]
is uncountable. Then there is a nonempty compact perfect set $P\subseteq U$ and a Borel isomorphism $\phi$ of $P\times 2^\omega$ onto a subset $R$ of $A$ such that for each $p\in P$, $\phi\res \{p\}\times 2^\omega$ is a homeomorphism onto $\{p\}\times R_p$. \label{prop:Mauldin} \end{fact}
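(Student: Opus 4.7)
The plan is to build a parameterized Cantor scheme, simultaneously in $X$, $Y$, and a witness space for $A$, whose limits will yield $P$, $R$, and $\phi$ all at once. First, by Fact \ref{fact:AnalyticRep}, $A = \pi_{X \times Y}(C)$ for some closed $C \subseteq X \times Y \times \mathcal{N}$; the same fact shows that $U$ is analytic, so by \ref{fact:card} it contains a compact perfect subset $U_0$. I would replace $U$ by $U_0$ and fix compatible complete metrics of diameter at most $1$ on $X$, $Y$, $\mathcal{N}$.

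The main construction would inductively produce, for each $s \in 2^{<\omega}$, a nonempty compact perfect set $P_s \subseteq U_0$ together with nonempty open sets $V_s \subseteq Y$ and $W_s \subseteq \mathcal{N}$ of diameter at most $2^{-|s|}$, satisfying:
\begin{itemize}
\item[(a)] $P_{s0}, P_{s1}$ are disjoint subsets of $P_s$; $\overline{V_{si}} \subseteq V_s$ with $\overline{V_{s0}} \cap \overline{V_{s1}} = \emptyset$, and analogously for $\overline{W_{si}} \subseteq W_s$;
\item[(b)] for every $x \in P_s$, the set $\pi_Y(C_x \cap (V_s \times W_s))$ is uncountable.
\end{itemize}
The base case is trivial. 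For the inductive step, I would fix a countable covering of $V_s \times W_s$ by open boxes of diameter at most $2^{-(|s|+1)}$. For each $x \in P_s$, $\pi_Y(C_x \cap V_s \times W_s)$ is uncountable and analytic, hence contains a Cantor set, forcing two covering boxes $B_0, B_1$ with disjoint $Y$-projections and both $\pi_Y(C_x \cap B_i)$ uncountable. Each condition ``$\pi_Y(C_x \cap B)$ is uncountable'' cuts out an analytic subset of $P_s$ (again by \ref{fact:AnalyticRep}), and since analytic sets have the Baire property, a category argument applied to $P_s$ furnishes a nonempty relatively open $U' \subseteq P_s$ and a single pair $(B_0, B_1)$ for which both conditions hold on a comeager subset of $U'$. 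Extracting a compact perfect subset of that comeager (uncountable) analytic set and splitting it into two disjoint compact perfect pieces yields $P_{s0}, P_{s1}$, while the $Y$- and $\mathcal{N}$-sides of $B_0, B_1$ provide $V_{si}, W_{si}$.

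Finally, I would set $P := \bigcap_n \bigsqcup_{|s|=n} P_s$, and for each $\sigma \in 2^\omega$ let $\{y_\sigma\} = \bigcap_n V_{\sigma \res n}$ and $\{z_\sigma\} = \bigcap_n W_{\sigma \res n}$. Condition (b) provides, for every $p \in P$ and $\sigma \in 2^\omega$, a witness sequence $(p, y_n, z_n) \in C$ with $y_n \in V_{\sigma \res n}$ and $z_n \in W_{\sigma \res n}$; closedness of $C$ then gives $(p, y_\sigma, z_\sigma) \in C$, so $(p, y_\sigma) \in A$. Defining $\phi(p, \sigma) = (p, y_\sigma)$, continuity is immediate from the diameter conditions, injectivity follows from (a), and each restriction $\phi \res \{p\} \times 2^\omega$ is a continuous injection from a compact space, hence a homeomorphism onto $\{p\} \times R_p$ for $R := \phi(P \times 2^\omega)$, which is compact and therefore Borel. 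The main obstacle will be the inductive step: ensuring uniformity across $x \in P_s$ while having only analytic (rather than Borel) information about the sections $C_x$. The Baire property of analytic sets, together with the fact that uncountable analytic sets contain compact perfect subsets, are precisely the tools needed to overcome this.
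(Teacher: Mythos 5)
This fact is not proved in the paper --- it is imported verbatim from Mauldin's \emph{Bimeasurable Functions} --- so your proposal can only be judged on its own terms. The toolkit you deploy (realizing $A$ as the projection of a closed $C\subseteq X\times Y\times\mathcal{N}$, a Cantor scheme of shrinking boxes, and a Baire-category argument using the Baire property of analytic sets to gain uniformity over $x$) is the right kind of machinery, and the inductive step itself --- writing $P_s$ as a countable union of analytic sets $E_{(B_0,B_1)}$, finding a non-meager one, and extracting a perfect subset --- is sound as far as it goes.

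However, the scheme is indexed incorrectly, and this is fatal. Your condition (b) is quantified only over $x\in P_s$, and since the sets $P_s$ with $|s|=n$ are pairwise disjoint, a point $p\in P$ belongs to $P_{\sigma\res n}$ for all $n$ only when $\sigma$ is the unique branch $\beta(p)$ determined by $p$. So condition (b) supplies witnesses $(p,y_n,z_n)\in C$ with $y_n\in V_{\sigma\res n}$ \emph{only} for $\sigma=\beta(p)$; for every other $\sigma$ the assertion $(p,y_\sigma)\in A$ is unsupported. Worse, because your $y_\sigma=\bigcap_n V_{\sigma\res n}$ does not depend on $p$, your conclusion would give the full rectangle $P\times\{y_\sigma\st\sigma\in 2^\omega\}\subseteq A$ with both factors perfect --- a statement strictly stronger than Mauldin's and false in general. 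Take $A=\{(x,y)\in 2^\omega\times 2^\omega\st(\forall i)\,y_{2i}=x_i\}$: this is closed, every section is a Cantor set (so $U=2^\omega$), but distinct sections are disjoint, so $A$ contains no rectangle $P\times K$ with $|P|\geq 2$ and $K\neq\emptyset$. The repair is to double-index the fiber side: for each $s\in 2^{<\omega}$ carry a family $V_{s,t}\subseteq Y$, $W_{s,t}\subseteq\mathcal{N}$ for $t\in 2^{|s|}$, with the $\overline{V_{s,t}}$ pairwise disjoint in $t$, with $\overline{V_{si,tj}}\subseteq V_{s,t}$, and with (b$'$): for every $x\in P_s$ and every $t\in 2^{|s|}$, $\pi_Y\big(C_x\cap(V_{s,t}\times W_{s,t})\big)$ is uncountable. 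Your category argument survives this change (for each $x$ one chooses a good pair of sub-boxes for each of the finitely many $t$, so $P_s$ is still a countable union of analytic sets, now indexed by finite choice functions), and one then sets $\phi(p,\sigma)=(p,y_{\beta(p),\sigma})$ where $y_{\beta,\sigma}=\bigcap_n V_{\beta\res n,\sigma\res n}$, so that $R_p$ genuinely varies with $p$. Two smaller points: do not insist on $\overline{W_{s0}}\cap\overline{W_{s1}}=\emptyset$ --- it is never needed and not always achievable (all witnesses may share a single $\mathcal{N}$-coordinate); and since you impose no diameter condition on the $P_s$, the limit $P$ need not be perfect, so either shrink each $P_s$ inside a small ball or pass to the perfect kernel of $P$ at the end.
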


\section{Proof of Proposition \ref{lem:technicalBP}}\label{sec:App3}

\begin{proposition}\label{prop:BPquotient} Let $D$ be a smooth Borel equivalence relation on the standard Borel space $X$, and let $F\subseteq D$ be a smooth countable Borel sub-equivalence relation. Then $D$ is Borel parametrized if and only if $D/F$ is Borel parametrized. \end{proposition}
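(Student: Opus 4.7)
The plan is to prove each direction by decomposing by class cardinality and exploiting the automatic Borel parametrization of the smooth countable $F$ given by Corollary \ref{cor:ctblBP}. The key combinatorial observation is that for a $D$-class $C$ of size $\m$, the corresponding $D/F$-class $C/F$ has size $|C/F| \leq \m$ when $\m \leq \aleph_0$, while $|C/F| = \cc$ when $\m = \cc$ (since $C$ is uncountable but every $F$-class is countable). Both parametrizations will therefore be built by treating countable-size and $\cc$-size classes separately.

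For the forward direction, assume $D$ is Borel parametrized via $\phi : X \cong_B \bigsqcup_{\m \in \C^+} Z_\m \times Y_\m$. For each countable $\m$, the relations $D\res X_\m$ and $F\res X_\m$ are both countable Borel equivalence relations, so Lemma \ref{lem:PBor} shows that the stratification $Z_{\m,\n} := \{z \in Z_\m : |(\{z\}\times Y_\m)/F| = \n\}$ is Borel for each $\n \leq \m$; applying Lusin--Novikov uniformization to a Borel transversal of $F\res(Z_{\m,\n}\times Y_\m)$ then produces a Borel isomorphism of $(Z_{\m,\n}\times Y_\m)/F$ with $Z_{\m,\n}\times Y_\n$ over $Z_{\m,\n}$. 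For the $\cc$-piece I need $(Z_\cc\times Y_\cc)/F \cong_B Z_\cc\times Y_\cc$ over $Z_\cc$; my approach is to combine the trivial product structure of $X_\cc = Z_\cc\times Y_\cc$ coming from $\phi$ with the Borel parametrization of $F\res X_\cc$ and the uniform Borel well-ordering of $F$-classes in order type $\omega$ or finite from Fact \ref{fact:ctblSmooth}(5) to construct the required Borel trivialization of the quotient bundle.

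For the backward direction, assume $D/F$ is Borel parametrized via $\psi : X/F \cong_B \bigsqcup_\n Z'_\n \times Y_\n$. Using the Borel parametrization of $F$ (automatic by Corollary \ref{cor:ctblBP}) to uniformly enumerate elements within each $F$-class, I would identify $X$ with $\{(z',y',i) : (z',y') \in \bigsqcup_\n Z'_\n\times Y_\n,\ 0 \leq i < \kappa(z',y')\}$, where $\kappa$ is a Borel function giving the size of the $F$-class at $\psi^{-1}(z',y')$. The $D$-class corresponding to $z'\in Z'_\n$ then has cardinality $\sum_{y'\in Y_\n}\kappa(z',y')$, a Borel function of $z'$ taking values in $\C^+$. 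Partitioning $X$ by this cardinality and parametrizing each piece --- by Lusin--Novikov for the countable strata and by the same trivialization argument as in the forward direction for the $\cc$-stratum --- yields the Borel parametrization of $D$.

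The main obstacle, in both directions, is the $\cc$-case trivialization, which reduces to the claim that a Borel bundle over $Z_\cc$ with $\cc$-fibers, arising as a smooth countable quotient or extension of the trivial bundle $Z_\cc\times Y_\cc$ by a fiber-respecting equivalence relation, is itself Borel isomorphic to $Z_\cc\times Y_\cc$ over $Z_\cc$. The analogous statement for arbitrary Borel sets with uncountable sections is false in general (cf. Example \ref{ex:MauldinBP} and Mauldin's Fact \ref{prop:Mauldin}), but the specific smooth-countable-fiber-respecting structure of $F$ provides the regularity needed to avoid Mauldin-type obstructions. Making this trivialization uniform, using the Borel parametrization of $F$ and the ordering provided by Fact \ref{fact:ctblSmooth}(5), is the crux of the argument.
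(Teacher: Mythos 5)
Your reduction to countable strata plus a single $\cc$-stratum is sound, and the Lusin--Novikov handling of the countable strata is fine (the paper disposes of that part even more quickly by quoting Corollary \ref{cor:ctblBP} and passing to the case where every $D$-class is uncountable). But the proposal stops exactly where the proof has to begin. You correctly isolate the crux --- trivializing, over $Z_\cc$, the quotient bundle $(Z_\cc\times Y_\cc)/F$, or equivalently a Borel transversal $T\subseteq Z_\cc\times Y_\cc$ for $F$ with all sections of size $\cc$ --- and you correctly observe that the analogous statement for an arbitrary Borel set with uncountable sections is false by Mauldin's example. What you do not supply is any argument for why the transversal of a smooth countable $F$ escapes that obstruction, and the tools you name will not produce one: Fact \ref{fact:ctblSmooth}(5) only orders the countably many points \emph{inside} each $F$-class, whereas a trivialization has to control how the continuum-many $F$-classes sit inside a single $D$-class. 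Asserting that ``the specific smooth-countable-fiber-respecting structure of $F$ provides the regularity needed'' is a restatement of the proposition, not a proof of it.

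The paper fills this gap with a measure-theoretic argument that has no counterpart in your sketch. By Mauldin's theorem \cite[2.4]{MauldinBP}, Borel parametrization of $D$ (reduced to the standard, all-classes-uncountable case and presented as a section equivalence relation on the graph of a reduction to $\Delta(Y)$) is equivalent to the existence of a conditional probability distribution $\mu(y,\cdot)$ that is atomless and concentrated on each class. Writing each class as the union of countably many Borel transversals $T_n$ for $F$ (Fact \ref{fact:ctblSmooth}(6)), for each $y$ some $T_n^y$ receives positive $\mu(y,\cdot)$-measure; choosing the least such $n$ fiberwise assembles a single Borel transversal $T$ carrying an atomless, nontrivial conditional measure on every fiber, and applying Mauldin's criteria again (\cite[2.1, 2.3]{MauldinBP}) shows that the section equivalence relation on $T$, which is isomorphic to $D/F$, is Borel parametrized. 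The converse direction extends the measure from the transversal back to the full fiber and invokes \cite[2.4]{MauldinBP} once more. This detour through conditional distributions is the substance of the proof; if you wish to avoid it, you must actually prove, rather than assert, that transversals of smooth countable subrelations cannot exhibit Mauldin-type pathology.
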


\begin{proof} Write $\tilde{X}=X/F$. Note that $X^{(D)}_{\leq\omega}$ is Borel if and only if $\tilde{X}^{(D/F)}_{\leq\omega}$ is Borel, so by \ref{cor:ctblBP} we may assume without loss of generality that every $D$-class is uncountable. Also by \ref{cor:ctblBP}, we may assume $|X/D|$ is uncountable. Furthermore, if either $D$ or $D/F$ is Borel parametrized, then $D$ is standard, so for the rest of the proof we assume $D$ is standard. Using \ref{prop:standard}, fix a standard Borel space $Y$ and a surjective Borel reduction $g:X\to Y$ from $D$ to $\Delta(Y)$. Let $G=\mbox{graph}(g)\subseteq X\times Y$, and let $D_0$ be the horizontal section equivalence relation on $G$, so that $D\cong_BD_0$. Also let $F_0$ denote the natural isomorphic copy of $F$ contained in $D_0$, namely $(x,y)\mathrel{F_0}(x',y')\Leftrightarrow x\mathrel{F}x'\Leftrightarrow x\mathrel{F}x'\wedge y=y'$.

Now, assume $D$ is Borel parametrized. Then by \cite[2.4]{MauldinBP} there is a conditional probability distribution $\mu$ on $Y\times\mathcal{B}_X$ such that for all $y\in Y$, $\mu(y,\cdot)$ is atomless and $\mu(y,G^y)=1$. Using \ref{fact:ctblSmooth}, let $\{T_n\st n\in\omega\}$ be a countable family of Borel transversals for $F_0$ such that $\cup_nT_n=G$. Then for each $y\in Y$, $G^y=\cup_nT_n^y$, so there must exist $n$ (depending on $y$) such that $\mu(y,T_n^y)>0$. For each $n$, let $Y_n$ be the set of $y\in Y$ for which $n$ is least such that $\mu(y,T_n^y)>0$. Define the Borel subset $T\subseteq G$ by
\[
(x,y)\in T\quad\Leftrightarrow\quad y\in Y_n\wedge (x,y)\in T_n.
\]
Then $T$ is a Borel transversal for $F_0$ such that for all $y\in Y$, $\mu(y,T^y)>0$. By \cite[2.1]{MauldinBP}, the function $\nu:Y\times\mathcal{B}_X\to\mathbb R$ defined by
\[
\nu(y,E)=\mu(y,T^y\cap E)
\]
is a conditional measure distribution on $Y\times\mathcal{B}_X$. For each $y\in Y$, $\nu(y,\cdot)$ is atomless and nontrivial since $\mu(y,\cdot)$ is, and $\nu(y,T^y)=\mu(y,T^y)>0$. It now follows from \cite[2.3]{MauldinBP} that $D_0\res T$ is Borel parametrized. But $D_0\res T\cong_B D_0/F_0$ via the quotient map, and $D_0/F_0\cong_B D/F$.

The proof of the converse is similar. Supposing that $D/F$ is Borel parametrized, fix a Borel transversal $T$ for $F_0$ and identify $D_0/F_0$ with $D_0\res T$ via the quotient map, so that $D_0\res T$ is Borel parametrized. By \cite[2.4]{MauldinBP} there is a conditional probability distribution $\nu$ on $Y\times\mathcal{B}_X$ such that for all $y\in Y$, $\nu(y,\cdot)$ is atomless and $\nu(y,T^y)=1$. Define $\mu:Y\times\mathcal{B}_X\to\mathbb R$ by
\[
\mu(y,E)=\nu(y,E\cap T^y).
\]
Then $\mu$ is a conditional probability distribution on $Y\times\mathcal{B}_X$ such that for each $y\in Y$, $\mu(y,\cdot)$ is atomless and $\mu(y,G^y)=1>0$. By \cite[2.4]{MauldinBP} it follows that $D_0$, and hence $D$, is Borel parametrized. \end{proof}

\end{document}